%% LyX 2.3.3 created this file.  For more info, see http://www.lyx.org/.
%% Do not edit unless you really know what you are doing.
\documentclass[american,english]{article}
\usepackage[T1]{fontenc}
\usepackage[latin9]{inputenc}
\usepackage{array}
\usepackage{textcomp}
\usepackage{amsmath}
\usepackage{amsthm}
\usepackage{amssymb}
\PassOptionsToPackage{normalem}{ulem}
\usepackage{ulem}

\makeatletter

%%%%%%%%%%%%%%%%%%%%%%%%%%%%%% LyX specific LaTeX commands.
%% Because html converters don't know tabularnewline
\providecommand{\tabularnewline}{\\}

%%%%%%%%%%%%%%%%%%%%%%%%%%%%%% Textclass specific LaTeX commands.
\numberwithin{equation}{section}
\numberwithin{figure}{section}
\theoremstyle{plain}
\newtheorem{thm}{\protect\theoremname}[section]
\theoremstyle{plain}
\newtheorem{cor}[thm]{\protect\corollaryname}
\theoremstyle{remark}
\newtheorem{rem}[thm]{\protect\remarkname}
\theoremstyle{plain}
\newtheorem{lem}[thm]{\protect\lemmaname}
\theoremstyle{plain}
\newtheorem{prop}[thm]{\protect\propositionname}

\makeatother

\usepackage{babel}
\addto\captionsamerican{\renewcommand{\corollaryname}{Corollary}}
\addto\captionsamerican{\renewcommand{\lemmaname}{Lemma}}
\addto\captionsamerican{\renewcommand{\propositionname}{Proposition}}
\addto\captionsamerican{\renewcommand{\remarkname}{Remark}}
\addto\captionsamerican{\renewcommand{\theoremname}{Theorem}}
\addto\captionsenglish{\renewcommand{\corollaryname}{Corollary}}
\addto\captionsenglish{\renewcommand{\lemmaname}{Lemma}}
\addto\captionsenglish{\renewcommand{\propositionname}{Proposition}}
\addto\captionsenglish{\renewcommand{\remarkname}{Remark}}
\addto\captionsenglish{\renewcommand{\theoremname}{Theorem}}
\providecommand{\corollaryname}{Corollary}
\providecommand{\lemmaname}{Lemma}
\providecommand{\propositionname}{Proposition}
\providecommand{\remarkname}{Remark}
\providecommand{\theoremname}{Theorem}

\begin{document}
\title{The Congruence Subgroup Problem \\
for finitely generated Nilpotent Groups}
\author{David El-Chai Ben-Ezra, Alexander Lubotzky}
\maketitle
\begin{abstract}
The congruence subgroup problem for a finitely generated group $\Gamma$
and $G\leq Aut(\Gamma)$ asks whether the map $\hat{G}\to Aut(\hat{\Gamma})$
is injective, or more generally, what is its kernel $C\left(G,\Gamma\right)$?
Here $\hat{X}$ denotes the profinite completion of $X$. In the case
$G=Aut(\Gamma)$ we denote $C\left(\Gamma\right)=C\left(Aut(\Gamma),\Gamma\right)$.

Let $\Gamma$ be a finitely generated group, $\bar{\Gamma}=\Gamma/[\Gamma,\Gamma]$,
and $\Gamma^{*}=\bar{\Gamma}/tor(\bar{\Gamma})\cong\mathbb{Z}^{(d)}$.
Denote
\[
Aut^{*}(\Gamma)=\textrm{Im}(Aut(\Gamma)\to Aut(\Gamma^{*}))\leq GL_{d}(\mathbb{Z}).
\]
In this paper we show that when $\Gamma$ is nilpotent, there is a
canonical isomorphism $C\left(\Gamma\right)\simeq C(Aut^{*}(\Gamma),\Gamma^{*})$.
In other words, $C\left(\Gamma\right)$ is completely determined by
the solution to the classical congruence subgroup problem for the
arithmetic group $Aut^{*}(\Gamma)$.

In particular, in the case where $\Gamma=\Psi_{n,c}$ is a finitely
generated free nilpotent group of class $c$ on $n$ elements, we
get that $C(\Psi_{n,c})=C(\mathbb{Z}^{(n)})=\{e\}$ whenever $n\geq3$,
and $C(\Psi_{2,c})=C(\mathbb{Z}^{(2)})=\hat{F}_{\omega}$ = the free
profinite group on countable number of generators.\\
\end{abstract}
\textbf{Mathematics Subject Classification (2010):} Primary: 19B37,
20F18; Secondary: 20H05, 20E36, 20E18, 11H56, 20F40.\textbf{}\\
\textbf{}\\
\textbf{Key words and phrases:} congruence subgroup problem, nilpotent
groups, automorphism groups, profinite groups.

\section{Introduction}

Let $G\leq GL_{n}\left(\mathbb{Z}\right)$. The classical congruence
subgroup problem (CSP) asks whether every finite index subgroup of
$G$ contains a principal congruence subgroup, i.e. a subgroup of
the form $G\left(m\right)=\ker\left(G\to GL_{n}\left(\mathbb{Z}/m\mathbb{Z}\right)\right)$
for some $0\neq m\in\mathbb{Z}$. Equivalently, it asks whether the
natural map $\hat{G}\to GL_{n}(\hat{\mathbb{Z}})$ is injective, where
$\hat{G}$ and $\hat{\mathbb{Z}}$ are the profinite completions of
the group $G$ and the ring $\mathbb{Z}$, respectively. More generally,
the CSP asks what is the kernel of this map. It is a classical $19^{\underline{th}}$
century result that for $G=GL_{n}\left(\mathbb{Z}\right)$ the answer
is negative when $n=2$. Moreover (but not so classical, cf. \cite{key-17},
\cite{key-4}), the kernel in this case is $\hat{F}_{\omega}$ - the
free profinite group on a countable number of generators. On the other
hand, it was proved in the sixties by Mennicke \cite{key-22} and
Bass-Lazard-Serre \cite{key-23} that for $n\geq3$ the map is injective,
and the kernel is therefore trivial. This breakthrough led to a rich
theory which studied the CSP for many other arithmetic groups. It
has been solved for many arithmetic groups, but not yet for all. See
\cite{key-15-1} and \cite{key-16} for surveys.

By the observation $GL_{n}\left(\mathbb{Z}\right)\cong Aut\left(\mathbb{Z}^{(n)}\right)$,
the CSP can be generalized as follows: Let $\Gamma$ be a group and
$G\leq Aut\left(\Gamma\right)$. For a finite index characteristic
subgroup $M\leq\Gamma$ denote
\[
G\left(M\right)=\ker\left(G\to Aut\left(\Gamma/M\right)\right).
\]
Such a $G\left(M\right)$ is called a ``principal congruence subgroup''
and a finite index subgroup of $G$ which contains $G\left(M\right)$
for some $M$ is called a ``congruence subgroup''. The CSP for the
pair $\left(G,\Gamma\right)$ asks whether every finite index subgroup
of $G$ is a congruence subgroup. 

One can see that the CSP is equivalent to the question: Is the congruence
map $\hat{G}=\underleftarrow{\lim}G/U\twoheadrightarrow\underleftarrow{\lim}G/G\left(M\right)$
injective? Here, $U$ ranges over all finite index normal subgroups
of $G$, and $M$ ranges over all finite index characteristic subgroups
of $\Gamma$. When $\Gamma$ is finitely generated, it has only finitely
many subgroups of a given index $m$, and thus, the charateristic
subgroups $M_{m}=\cap\left\{ \Delta\leq\Gamma\,|\,\left[\Gamma:\Delta\right]=m\right\} $
are of finite index in $\Gamma$. Hence, one can write $\Gamma=\underleftarrow{\lim}_{m\in\mathbb{N}}\Gamma/M_{m}$
and have\footnote{When we write $Aut(\hat{\Gamma})$ we basically mean to consider the
group of continuous automorphisms of $\hat{\Gamma}$. However, by
the celebrated theorem of Nikolov and Segal which asserts that every
finite index subgroup of a finitely generated profinite group is open
\cite{key-17-1}, whenever $\Gamma$ is finitely generated, the group
of continuous automorphisms of $\hat{\Gamma}$ is equal to the group
of automorphisms of $\hat{\Gamma}$. }
\begin{eqnarray*}
\underleftarrow{\lim}G/G\left(M\right) & = & \underleftarrow{\lim}_{m\in\mathbb{N}}G/G\left(M_{m}\right)\leq\underleftarrow{\lim}_{m\in\mathbb{N}}Aut(\Gamma/M_{m})\\
 & = & Aut(\underleftarrow{\lim}_{m\in\mathbb{N}}(\Gamma/M_{m}))=Aut(\hat{\Gamma}).
\end{eqnarray*}
Therefore, when $\Gamma$ is finitely generated, the CSP is equivalent
to the question: Is the congruence map $\hat{G}\to Aut(\hat{\Gamma})$
injective? More generally, the CSP asks what is the kernel $C\left(G,\Gamma\right)$
of this map. For $G=Aut\left(\Gamma\right)$ we will also use the
simpler notation $C\left(\Gamma\right)=C\left(Aut\left(\Gamma\right),\Gamma\right)$. 

The classical CSP results mentioned above can therefore be reformulated
as $C(\mathbb{Z}^{(2)})=\hat{F}_{\omega}$ while $C(\mathbb{Z}^{(n)})=\left\{ e\right\} $
for $n\geq3$. Recently, it was proved that when $\Gamma=\Phi_{n}$
is the free metabelian group on $n$ generators, we have: $C\left(\Phi_{2}\right)=\hat{F}_{\omega}$,
$C\left(\Phi_{3}\right)\supseteq\hat{F}_{\omega}$, and for every
$n\geq4$, $C\left(\Phi_{n}\right)$ is abelian (see \cite{key-6,key-14,key-6-2,key-7}).
I.e. while in the free abelain case there is a dichotomy between $n=2$
and $n\geq3$, in the free metabelian case we have dichotomy between
$n=2,3$ and $n\geq4$. 

The goal of this paper is to show that contrary to the above metabelian
cases, when $\Gamma$ is a finitely generated nilpotent group, the
CSP for $\Gamma$ is completely determined by the CSP for abelian
groups. Let us put things more precise: Let $\Gamma$ be a finitely
generated group, $\bar{\Gamma}=\Gamma/[\Gamma,\Gamma]$ and $\Gamma^{*}=\bar{\Gamma}/tor(\bar{\Gamma})$,
so $\Gamma^{*}\cong\mathbb{Z}^{(d)}$ for some $d$. Denote 
\[
Aut^{*}\left(\Gamma\right)=\textrm{Im}(Aut\left(\Gamma\right)\to Aut(\Gamma^{*}))\leq GL_{d}(\mathbb{Z}).
\]
When $\Gamma$ is nilpotent, the group $Aut^{*}(\Gamma)$ is known
to be an arithmetic subgroup of $GL_{d}(\mathbb{Z})$, and every arithmetic
subgroup of $GL_{d}(\mathbb{Z})$ is obtained like that for some nilpotent
group $\Gamma$ (\cite{key-1,key-31,key-32}).

The canonical map $Aut(\Gamma)\to Aut^{*}\left(\Gamma\right)$ induces
a map 
\[
C(\Gamma)\to C(Aut^{*}\left(\Gamma\right),\Gamma^{*}).
\]
Here is the main theorem of the paper:
\begin{thm}
\label{thm:Thm}Let $\Gamma$ be a finitely generated nilpotent group.
Then, the canonical map $C(\Gamma)\to C(Aut^{*}\left(\Gamma\right),\Gamma^{*})$
is an isomorphism.
\end{thm}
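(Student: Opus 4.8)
The plan is to compare the two completions of $A:=\mathrm{Aut}(\Gamma)$ that govern the problem — the full profinite completion $\hat{A}$ and the congruence completion $\overline{A}:=\varprojlim_{M}A/A(M)$, a closed subgroup of $\mathrm{Aut}(\hat{\Gamma})$, so that $C(\Gamma)=\ker(\hat{A}\to\overline{A})$ — by running everything through the short exact sequence
\[
1\longrightarrow K\longrightarrow A\overset{\pi}{\longrightarrow}A^{*}\longrightarrow 1,\qquad K:=\ker\!\bigl(A\longrightarrow A^{*}:=\mathrm{Aut}^{*}(\Gamma)\bigr).
\]
The first point is the structure of $K$: it contains $\mathrm{IA}(\Gamma):=\ker(A\to\mathrm{Aut}(\bar{\Gamma}))$ as a finite-index subgroup — the quotient embeds into the finite group $\ker(\mathrm{Aut}(\bar{\Gamma})\to\mathrm{Aut}(\Gamma^{*}))$ — and $\mathrm{IA}(\Gamma)$ is finitely generated nilpotent: the chain $\mathrm{IA}(\Gamma)=N^{(1)}\supseteq N^{(2)}\supseteq\dots\supseteq N^{(c)}=1$, where $N^{(j)}$ consists of the automorphisms trivial on $\Gamma/\gamma_{j+1}(\Gamma)$, is a central series of $\mathrm{IA}(\Gamma)$ (an automorphism in $N^{(j)}$ carries $\gamma_{i}(\Gamma)$ into $\gamma_{i+j}(\Gamma)$) with finitely generated abelian quotients $N^{(j)}/N^{(j+1)}\hookrightarrow\mathrm{Hom}\bigl(\Gamma^{ab},\gamma_{j+1}(\Gamma)/\gamma_{j+2}(\Gamma)\bigr)$. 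In particular $K$ is finitely generated and virtually nilpotent (and $A$ is finitely generated).

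Completing the displayed sequence both profinitely and congruence-wise produces a commutative diagram with exact rows
\[
\begin{array}{ccccccccc}
1 & \longrightarrow & \overline{K} & \longrightarrow & \hat{A} & \longrightarrow & \widehat{A^{*}} & \longrightarrow & 1\\
 & & \downarrow & & \downarrow & & \downarrow & & \\
1 & \longrightarrow & \overline{K}_{c} & \longrightarrow & \overline{A} & \longrightarrow & \overline{A^{*}} & \longrightarrow & 1,
\end{array}
\]
in which $\overline{K}$ is the closure of $K$ in $\hat{A}$ (so $\hat{A}/\overline{K}\cong\widehat{A^{*}}$ by right-exactness of completion); $\overline{A^{*}}$ on the bottom is the congruence completion of $A^{*}$ acting on $\Gamma^{*}$, i.e. the closure of $A^{*}$ in $GL_{d}(\hat{\mathbb{Z}})=\mathrm{Aut}(\widehat{\Gamma^{*}})$, and the bottom horizontal map is available because every congruence subgroup of $A^{*}$ relative to $\Gamma^{*}$ pulls back to a congruence subgroup of $A$ relative to $\Gamma$; and $\overline{K}_{c}$ is the closure of $K$ in $\overline{A}$, so exactness of the bottom row is exactly the statement $\overline{K}_{c}=\ker(\overline{A}\to\overline{A^{*}})$. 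Write $\alpha$ for the left vertical map; it is surjective (its image is closed and contains the image of $K$). The middle vertical map has kernel $C(\Gamma)$ and the right one has kernel $C(A^{*},\Gamma^{*})$, and all three cokernels vanish, so the snake lemma gives $1\to\ker\alpha\to C(\Gamma)\to C(A^{*},\Gamma^{*})\to 1$. Thus the theorem is equivalent to: (a) $\ker\alpha=1$, i.e. the congruence topology on $K$ (basic open subgroups $K(M)=K\cap A(M)$, $M\le\Gamma$ characteristic of finite index) agrees with the full profinite topology of $K$; and (b) $\overline{K}_{c}=\ker(\overline{A}\to\overline{A^{*}})$.

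Assertion (a) is the heart of the argument and the step I expect to be the main obstacle. Since $K/\mathrm{IA}(\Gamma)$ is finite, and one can choose $M$ (separating the finite group $tor(\bar{\Gamma})$) so that $K(M)\le\mathrm{IA}(\Gamma)$, it suffices to show that every finite-index subgroup of $\mathrm{IA}(\Gamma)$ contains some $\mathrm{IA}(\Gamma)(M)$. Because $\mathrm{IA}(\Gamma)$ is finitely generated nilpotent, its verbal subgroups $\mathrm{IA}(\Gamma)^{k}$ (generated by $k$-th powers) have finite index and are cofinal among its finite-index subgroups, so it is enough to produce, for each $k$, a characteristic finite-index $M\le\Gamma$ with $\mathrm{IA}(\Gamma)(M)\subseteq\mathrm{IA}(\Gamma)^{k}$. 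Using that finitely generated nilpotent groups are subgroup separable, I would choose $M$ so that the image of $M\cap\gamma_{i}(\Gamma)$ in $\gamma_{i}(\Gamma)/\gamma_{i+1}(\Gamma)$ lies in $k'\cdot\bigl(\gamma_{i}(\Gamma)/\gamma_{i+1}(\Gamma)\bigr)$ for all $i\le c$, with $k'$ a suitable multiple of $k$; feeding this into the embeddings $N^{(j)}/N^{(j+1)}\hookrightarrow\mathrm{Hom}(\Gamma^{ab},\gamma_{j+1}(\Gamma)/\gamma_{j+2}(\Gamma))$ and inducting down the filtration $N^{(\bullet)}$ shows $\mathrm{IA}(\Gamma)(M)\subseteq\mathrm{IA}(\Gamma)^{k}$. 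This filtration estimate is precisely where one uses that \emph{both} $\Gamma$ and $\mathrm{IA}(\Gamma)$ are finitely generated nilpotent — the feature absent in the free metabelian case, which is why the behaviour there is genuinely different.

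For (b), I would analyse the projection of congruence subgroups to $A^{*}$: using again that $tor(\bar{\Gamma})$ is finite, one shows that $\pi(A(M))$ contains a congruence subgroup of $A^{*}$ relative to $\Gamma^{*}$, so the families $\{\pi(A(M))\}_{M}$ and $\{A^{*}(M^{*})\}_{M^{*}}$ are cofinal in $A^{*}$; this identifies $\overline{A}/\overline{K}_{c}$ with $\overline{A^{*}}$ and yields (b). Combining (a) and (b), the snake sequence collapses to the asserted isomorphism $C(\Gamma)\cong C(\mathrm{Aut}^{*}(\Gamma),\Gamma^{*})$. As a special case, for $\Gamma=\Psi_{n,c}$ one has $\Gamma^{*}=\mathbb{Z}^{(n)}$ and $\mathrm{Aut}^{*}(\Psi_{n,c})=GL_{n}(\mathbb{Z})$ (the map $\mathrm{Aut}(\Psi_{n,c})\to GL_{n}(\mathbb{Z})$ being onto), whence $C(\Psi_{n,c})\cong C(GL_{n}(\mathbb{Z}),\mathbb{Z}^{(n)})=C(\mathbb{Z}^{(n)})$, which is $\{e\}$ for $n\ge 3$ and $\hat{F}_{\omega}$ for $n=2$.
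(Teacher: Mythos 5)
Your overall plan coincides with the paper's: both approaches run the short exact sequence $1\to K\to A\to A^{*}\to 1$ with $K=IA^{*}(\Gamma)$ through the two completions and then do a diagram chase, and your assertions (a) and (b) are exactly the two halves of the paper's Theorem~\ref{thm:The therorem} (congruence subgroup property for $K$, and density of $K$ in $IA^{*}(\hat\Gamma)$, which is what makes the left vertical map surjective). The gap is in how you propose to prove (a) and especially (b).

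For (a), you argue directly via verbal subgroups $IA(\Gamma)^{k}$ and the Andreadakis filtration $N^{(j)}$. The step where you feed $M\cap\gamma_{j+1}(\Gamma)\subseteq k'\gamma_{j+1}(\Gamma)+\gamma_{j+2}(\Gamma)$ into $N^{(j)}/N^{(j+1)}\hookrightarrow\mathrm{Hom}(\Gamma^{ab},\gamma_{j+1}/\gamma_{j+2})$ does not immediately land you inside $k\bigl(N^{(j)}/N^{(j+1)}\bigr)$: being divisible by $k'$ in the ambient $\mathrm{Hom}$-group does not force divisibility in the subgroup $N^{(j)}/N^{(j+1)}$ (think of $(2,0)$ in $\{(a,b):a\equiv b\bmod 2\}\le\mathbb{Z}^{2}$). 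This is repairable by enlarging $k'$ to account for the elementary divisors of the inclusions at each level, but as written it is a real gap, and it also needs to be checked uniformly in the presence of torsion in $\Gamma$ and in the $\gamma_i/\gamma_{i+1}$. The paper sidesteps all of this: it passes from $\Gamma$ to its lattice hull $\Delta$, identifies $IA^{*}(\Delta)$ with the $\mathbb{Z}$-points of a unipotent $\mathbb{Q}$-algebraic group via $\log$/$\exp$, and invokes the known congruence subgroup property for arithmetic unipotent groups.

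The more serious problem is (b). Your claim that $\pi(A(M))$ contains a congruence subgroup $A^{*}(m)$, and that this ``follows using that $tor(\bar\Gamma)$ is finite,'' is not a cofinality formality — it is a genuine lifting statement: given $\sigma\in A^{*}$ with $\sigma\equiv 1\pmod m$, one must produce a lift $\tilde\sigma\in\mathrm{Aut}(\Gamma)$ with $\tilde\sigma\equiv 1\pmod M$, equivalently, one must adjust an arbitrary lift by an element of $K$ so that it becomes trivial on $\Gamma/M$. That is exactly a strong approximation property for $K=IA^{*}(\Gamma)$ (indeed it is equivalent to $\overline{K}_{c}=\overline{A}\cap IA^{*}(\hat\Gamma)$), and it does not come for free from the finiteness of $tor(\bar\Gamma)$. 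This is the half of the argument the paper treats by strong approximation for arithmetic unipotent groups (via Proposition~7.1 of Platonov--Rapinchuk, applied to the unipotent group underlying $IA^{*}(\Delta)$), after the reductions from $\Gamma$ to the torsion-free quotient and then to the lattice hull. Without some such input, the bottom row of your diagram is not known to be exact, the snake lemma does not apply, and the reduction of Theorem~\ref{thm:Thm} to (a) collapses. So: the architecture matches the paper's, and assertion (a) is a plausible elementary alternative to the paper's unipotent-arithmetic argument once the divisibility bookkeeping is fixed, but assertion (b) is stated without a proof and is precisely the point where the paper invests the strong approximation machinery.
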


So, the CSP for nilpotent groups is completely reduced to the classic
CSP. In particular, in the free cases, we have:
\begin{cor}
\label{cor:free}Let $\Gamma=\Psi_{n,c}$ be the free nilpotent group
of class $c$ on $n$ elements. Then 
\[
C(\Gamma)\cong C(Aut^{*}\left(\Gamma\right),\Gamma^{*})=C(GL_{n}(\mathbb{Z}),\mathbb{Z}^{(n)})=C(\mathbb{Z}^{(n)}).
\]
In particular: 
\begin{itemize}
\item For $n=2$ one has $C(\Psi_{2,c})\cong C(\mathbb{Z}^{(2)})\cong\hat{F}_{\omega}$.
\item For $n\geq3$ one has $C(\Psi_{n,c})\cong C(\mathbb{Z}^{(n)})\cong\left\{ e\right\} $.
\end{itemize}
\end{cor}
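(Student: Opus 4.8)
The plan is to obtain Corollary~\ref{cor:free} as a direct consequence of Theorem~\ref{thm:Thm}: the task therefore reduces to identifying $\Gamma^{*}$ and $Aut^{*}(\Gamma)$ when $\Gamma=\Psi_{n,c}$, and then quoting the classical solution of the congruence subgroup problem for $\mathbb{Z}^{(n)}$ recalled in the introduction.

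First I would pin down $\bar{\Gamma}$ and $\Gamma^{*}$. Realizing $\Psi_{n,c}=F_{n}/\gamma_{c+1}(F_{n})$ for the free group $F_{n}$ of rank $n$, we have $[\Psi_{n,c},\Psi_{n,c}]=\gamma_{2}(F_{n})/\gamma_{c+1}(F_{n})$, hence $\bar{\Gamma}=\Psi_{n,c}/[\Psi_{n,c},\Psi_{n,c}]\cong F_{n}/\gamma_{2}(F_{n})\cong\mathbb{Z}^{(n)}$. This is already torsion free, so $\Gamma^{*}=\bar{\Gamma}\cong\mathbb{Z}^{(n)}$ and $d=n$. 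Next I would check that $Aut^{*}(\Psi_{n,c})=GL_{n}(\mathbb{Z})$: since $\gamma_{c+1}(F_{n})$ is characteristic in $F_{n}$, every automorphism of $F_{n}$ descends to $\Psi_{n,c}$, giving a homomorphism $Aut(F_{n})\to Aut(\Psi_{n,c})$, and composing it with $Aut(\Psi_{n,c})\to Aut(\Gamma^{*})=GL_{n}(\mathbb{Z})$ recovers the abelianization map $Aut(F_{n})\to Aut(F_{n}^{ab})=GL_{n}(\mathbb{Z})$, which is onto (the standard generators of $GL_{n}(\mathbb{Z})$ lift to Nielsen automorphisms of $F_{n}$). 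Hence $Aut(\Psi_{n,c})\twoheadrightarrow GL_{n}(\mathbb{Z})$ and $Aut^{*}(\Psi_{n,c})=GL_{n}(\mathbb{Z})$. Feeding this into Theorem~\ref{thm:Thm} gives the canonical isomorphism
\[
C(\Psi_{n,c})\;\cong\;C(Aut^{*}(\Psi_{n,c}),\Gamma^{*})\;=\;C(GL_{n}(\mathbb{Z}),\mathbb{Z}^{(n)})\;=\;C(\mathbb{Z}^{(n)}),
\]
the last equality being just the notation $C(\mathbb{Z}^{(n)})=C(Aut(\mathbb{Z}^{(n)}),\mathbb{Z}^{(n)})$.

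To finish I would invoke the classical results quoted in the introduction: $C(\mathbb{Z}^{(2)})\cong\hat{F}_{\omega}$ (the congruence kernel of $GL_{2}(\mathbb{Z})$; cf. \cite{key-17,key-4}) and $C(\mathbb{Z}^{(n)})=\{e\}$ for $n\geq3$ (Bass--Lazard--Serre \cite{key-23} and Mennicke \cite{key-22}); combined with the displayed isomorphism this yields $C(\Psi_{2,c})\cong\hat{F}_{\omega}$ and $C(\Psi_{n,c})=\{e\}$ for $n\geq3$. There is no genuine obstacle in this corollary beyond Theorem~\ref{thm:Thm} itself and these classical CSP inputs; the only corollary-specific point requiring explicit verification is that $Aut(\Psi_{n,c})$ surjects onto $GL_{n}(\mathbb{Z})$, which follows at once from the surjectivity of $Aut(F_{n})\to GL_{n}(\mathbb{Z})$ together with the fact that $\gamma_{c+1}(F_{n})$ is characteristic in $F_{n}$.
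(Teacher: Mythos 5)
Your proposal is correct and takes essentially the same approach the paper intends: the paper presents this corollary as an immediate consequence of Theorem~\ref{thm:Thm} without a separate proof, and you correctly supply the routine verifications (that $\Gamma^{*}\cong\mathbb{Z}^{(n)}$, that $Aut^{*}(\Psi_{n,c})=GL_{n}(\mathbb{Z})$ via the surjectivity of $Aut(F_{n})\to GL_{n}(\mathbb{Z})$ factoring through $Aut(\Psi_{n,c})$) together with the classical CSP results for $GL_{n}(\mathbb{Z})$ quoted in the introduction.
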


\begin{rem}
As mentioned above, every arithmetic subgroup $D$ of $GL_{d}(\mathbb{Z})$
can appear as $Aut^{*}(\Gamma)$ for a suitable nilpotent $\Gamma$.
The possible congruence kernels for such arithmetic groups are not
fully known as the classical CSP is not yet fully solved. But these
include, besides the trivial groups and $\hat{F}_{\omega}$ mentioned
above, also finite cyclic groups (when $D$ is the restriction of
scalars from suitable number fields) as well as infinite abelian groups
of finite exponent (if $D$ is an arithmetic group of a non simply
connected group).
\end{rem}

Here is the main line of the proof. For a finitely generated group
$\Gamma$ consider the commutative exact diagram
\[
\begin{array}{ccccccccc}
1 & \to & IA^{*}\left(\Gamma\right) & \to & Aut\left(\Gamma\right) & \to & Aut^{*}\left(\Gamma\right) & \to & 1\\
 &  & \downarrow &  & \downarrow &  & \downarrow\\
1 & \to & IA^{*}(\hat{\Gamma}) & \to & Aut(\hat{\Gamma}) & \to & Aut^{*}(\hat{\Gamma}) & \to & 1
\end{array}
\]
when we define $IA^{*}(\Gamma)=\ker(Aut(\Gamma)\to Aut(\Gamma^{*}))$
and $Aut^{*}(\hat{\Gamma}),IA^{*}(\hat{\Gamma})$ defined to be the
image and the kernel of the natural map $Aut(\hat{\Gamma})\to Aut(\widehat{\Gamma^{*}})=GL_{d}(\mathbb{\hat{Z}})$,
respectively. This diagram gives rise to the commutative exact diagram
(see Lemma 2.1 in \cite{key-30})
\begin{equation}
\begin{array}{ccccccccc}
 &  & \widehat{IA^{*}\left(\Gamma\right)} & \to & \widehat{Aut\left(\Gamma\right)} & \to & \widehat{Aut^{*}\left(\Gamma\right)} & \to & 1\\
 &  & \downarrow &  & \downarrow &  & \downarrow\\
1 & \to & IA^{*}(\hat{\Gamma}) & \to & Aut(\hat{\Gamma}) & \to & Aut^{*}(\hat{\Gamma}) & \to & 1
\end{array}\label{eq:diagram}
\end{equation}
Notice that $C(Aut^{*}\left(\Gamma\right),\Gamma^{*})=\ker(\widehat{Aut^{*}\left(\Gamma\right)}\to Aut^{*}(\hat{\Gamma}))$.
We prove the following theorem:
\begin{thm}
\label{thm:The therorem}Let $\Gamma$ be a finitely generated nilpotent
group. Then:
\begin{enumerate}
\item For any $G\leq IA^{*}\left(\Gamma\right)$, the natural map $G\to IA^{*}(\hat{\Gamma})$
is an embedding. In other words 
\[
C(G,\Gamma)=\{e\}
\]
so we have an affirmative solution to the CSP for any $G\leq IA^{*}\left(\Gamma\right)$. 
\item The group $IA^{*}\left(\Gamma\right)$ is dense in $IA^{*}(\hat{\Gamma})$. 
\end{enumerate}
\end{thm}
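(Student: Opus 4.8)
The plan is to prove the two parts together by an induction on the nilpotency class $c$ of $\Gamma$, using the lower central series to peel off one central layer at a time. Write $\Gamma_{c}=\gamma_{c}(\Gamma)$ for the last nontrivial term of the lower central series; it is a finitely generated abelian group lying in the center of $\Gamma$, and $\Lambda=\Gamma/\Gamma_{c}$ is finitely generated nilpotent of class $c-1$. The base case $c=1$ is vacuous since then $IA^{*}(\Gamma)=\{e\}$. For the inductive step I would first record the standard fact that $IA^{*}(\Gamma)$ is itself a finitely generated torsion-free nilpotent group: an automorphism fixing $\Gamma^{*}$ acts unipotently on each graded piece $\gamma_{i}(\Gamma)/\gamma_{i+1}(\Gamma)$ (modulo torsion), so $IA^{*}(\Gamma)$ embeds in a group of unipotent-type matrices and hence is nilpotent; finite generation follows because $Aut(\Gamma)$ is finitely generated (it is arithmetic-by-(finitely generated nilpotent), or one cites the standard structure theory). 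This is the conceptual heart: for a finitely generated nilpotent group $N$, the profinite completion $\hat{N}=\prod_{p}N_{p}$ is built from the pro-$p$ completions, $N$ embeds in $\hat{N}$, and — crucially — $N$ is dense in $\hat{N}$ by construction, so part (1) for $N$ in place of $IA^{*}(\Gamma)$ and the abstract density statement are both automatic. The work is to transfer these facts through the identification of $IA^{*}(\hat\Gamma)$ with the profinite completion of $IA^{*}(\Gamma)$.

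So the real task is Step two: show that the natural map $\widehat{IA^{*}(\Gamma)}\to IA^{*}(\hat\Gamma)$ appearing in diagram \eqref{eq:diagram} is an isomorphism. Injectivity of this map is exactly part (1), and surjectivity (combined with the fact that $IA^{*}(\Gamma)$ is dense in its own profinite completion) is exactly part (2); so the two assertions of the theorem are equivalent to this single claim. To prove it I would filter $IA^{*}(\Gamma)$ by the subgroups $K_{i}=\ker(Aut(\Gamma)\to Aut(\Gamma/\gamma_{i+1}(\Gamma)))$ for $i\geq 1$, together with $K_{0}=IA^{*}(\Gamma)$ (so $K_{1}=\ker(Aut(\Gamma)\to Aut(\Gamma/\gamma_2(\Gamma)))=IA(\Gamma)$ intersected appropriately, and $K_c=\{e\}$), and analyze the successive quotients. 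Each quotient $K_{i-1}/K_{i}$ embeds (via $\varphi\mapsto(g\mapsto \varphi(g)g^{-1})$) into $\mathrm{Hom}(\Gamma^{*},\gamma_{i}(\Gamma)/\gamma_{i+1}(\Gamma))$, a finitely generated free abelian group, and this embedding is compatible with the analogous statement over $\hat\Gamma$, where $\mathrm{Hom}(\widehat{\Gamma^{*}},\widehat{\gamma_{i}/\gamma_{i+1}})$ is the corresponding completion. A diagram chase on the resulting filtrations of $\widehat{IA^{*}(\Gamma)}$ and of $IA^{*}(\hat\Gamma)$, using that profinite completion is exact on the relevant finitely generated nilpotent groups and abelian groups, then yields the isomorphism by induction on $i$. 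The one genuine subtlety here is that one must know each $K_{i}$ is \emph{closed} of finite index in the appropriate sense and that the maps $K_{i-1}/K_{i}\hookrightarrow \mathrm{Hom}(\dots)$ have well-understood (finite) cokernels; this is where the arithmeticity input $Aut^*(\Gamma)\leq GL_d(\mathbb Z)$ and the good behavior of completions for finitely generated nilpotent groups are used.

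The main obstacle I anticipate is not injectivity — that follows cleanly from residual finiteness and nilpotence of $IA^{*}(\Gamma)$, since finitely generated nilpotent groups are residually finite and, more strongly, good in the sense that their completions behave well in extensions — but rather \emph{surjectivity/density}, i.e. showing every element of $IA^{*}(\hat\Gamma)$ is a limit of elements of $IA^{*}(\Gamma)$. The danger is that an automorphism of $\hat\Gamma$ trivial on $\widehat{\Gamma^{*}}$ might fail to be approximable by honest automorphisms of $\Gamma$ that are trivial on $\Gamma^{*}$; one controls this by lifting layer by layer along the central filtration above, at each stage solving a lifting problem in the finitely generated abelian (hence well-behaved under completion) group $\mathrm{Hom}(\Gamma^{*},\gamma_{i}/\gamma_{i+1})$, and checking that a lift consistent at finite levels assembles into an actual element of $Aut(\Gamma)$ rather than merely $Aut(\hat\Gamma)$. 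The verification that these layer-by-layer lifts are compatible — i.e. that no obstruction appears in passing from class $i$ to class $i+1$ — is the step I would budget the most care for, and it is precisely here that the hypothesis ``$\Gamma$ nilpotent'' (as opposed to, say, metabelian) is essential, since it guarantees the filtration terminates and each layer is a finitely generated abelian group on which profinite completion is transparent.
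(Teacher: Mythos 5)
Your proposal takes a genuinely different route from the paper, and the route has a real gap. The paper first reduces to torsion-free $\Gamma$ (in $\mathsection$\ref{sec:torsion}), then passes to the lattice hull $\Delta\supseteq\Gamma$ inside the Mal'cev completion $R$, chooses a basis of $\log(\Delta)$ adapted to the lower central series of the Lie algebra $L$, and shows that in this basis $IA^*(\Delta)$ is the group of $\mathbb{Z}$-points of a unipotent $\mathbb{Q}$-algebraic group whose $\mathbb{Z}_p$-points are $IA^*(\Delta_p)$. Part (1) is then the classical congruence subgroup property for unipotent arithmetic groups, and part (2) is strong approximation for them. Your central-series filtration of $IA^*(\Gamma)$ by $K_i=\ker(Aut(\Gamma)\to Aut(\Gamma/\gamma_{i+1}(\Gamma)))$, with a layer-by-layer comparison against a parallel filtration of $IA^*(\hat\Gamma)$, is instead essentially the strategy the paper carries out in $\mathsection$\ref{sec:free} for the free nilpotent groups $\Psi_{n,c}$, where it succeeds because Andreadakis's theorem identifies the bottom kernel $A(\Psi_{c+1})=\ker(IA(\Psi_{c+1})\to IA(\Psi_c))$ with the \emph{full} group $Z(\Psi_{c+1})^{(n)}$, a finitely generated free abelian group whose profinite completion is transparent.

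For a general nilpotent $\Gamma$ the gap sits exactly where you flag it. The embedding $K_{i-1}/K_i\hookrightarrow\mathrm{Hom}(\bar\Gamma,\gamma_i(\Gamma)/\gamma_{i+1}(\Gamma))$ (which, incidentally, factors only through $\bar\Gamma$ and not automatically through $\Gamma^*$, since $\gamma_i(\Gamma)/\gamma_{i+1}(\Gamma)$ may have torsion) is in general far from surjective: its image consists of those derivations that lift to automorphisms of $\Gamma$, a condition imposed by the relations of $\Gamma$. Comparing this constrained image over $\mathbb{Z}$ with the corresponding constrained image over $\hat{\mathbb{Z}}$ is itself a congruence-subgroup-plus-strong-approximation statement for a unipotent variety --- this is the substance of the theorem, not a lemma one may invoke. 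Your phrase ``well-understood (finite) cokernels'' is where the proof would have to go, and nothing is supplied; I do not see an elementary substitute for the algebraic-group input the paper uses. Two lesser inaccuracies: $IA^*(\Gamma)$ need not be torsion-free when $\Gamma$ has torsion (the kernel of $IA^*(\Gamma)\to IA^*(\Gamma/tor(\Gamma))$ is finite but possibly nontrivial, which is precisely why the paper treats the torsion reduction separately); and residual finiteness of $IA^*(\Gamma)$ only gives $IA^*(\Gamma)\hookrightarrow\widehat{IA^*(\Gamma)}$, whereas injectivity of $\widehat{IA^*(\Gamma)}\to IA^*(\hat\Gamma)$ additionally requires that the congruence topology (induced from the action on finite quotients of $\Gamma$) coincide with the full profinite topology on $IA^*(\Gamma)$ --- that equality is the nontrivial CSP assertion, supplied in the paper by the unipotent CSP for $IA^*(\Delta)\leq GL_k(\mathbb{Z})$ together with subgroup separability of finitely generated nilpotent groups.
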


Notice that from the first part of Theorem \ref{thm:The therorem}
we obtain that in particular $C(IA^{*}\left(\Gamma\right),\Gamma)=\{e\}$
for any finitely generated nilpotent group $\Gamma$. This is not
true in general (cf. \cite{key-6,key-14,key-6-2,key-7} for free metabelian
groups). In some sense, the second part of Theorem \ref{thm:The therorem}
means that the map $IA^{*}\left(\Gamma\right)\to IA^{*}(\hat{\Gamma})$
satisfies a ``strong approximation'' propery. This is not true in
general either (compare \cite{key-9} for free groups). From the two
parts of Theorem \ref{thm:The therorem} we obtain the following corollary,
which also implies Theorem \ref{thm:Thm}:
\begin{cor}
\label{cor:isomorphism}Let $\Gamma$ be a finitely generated nilpotent
group. Then
\[
\widehat{IA^{*}\left(\Gamma\right)}\cong IA^{*}(\hat{\Gamma}).
\]
\end{cor}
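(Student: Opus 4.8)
The plan is to derive Corollary \ref{cor:isomorphism} as a formal consequence of the two parts of Theorem \ref{thm:The therorem}, using the fact that for a finitely generated nilpotent group $\Gamma$ the automorphism group $Aut(\Gamma)$, and hence $IA^*(\Gamma)$, is finitely generated. First I would recall that $IA^*(\Gamma)$ is itself finitely generated nilpotent-by-finite (indeed finitely generated, since $Aut(\Gamma)$ is finitely generated for $\Gamma$ f.g. nilpotent, and $IA^*(\Gamma)$ has finite index in $IA(\Gamma)=\ker(Aut(\Gamma)\to GL_d(\mathbb{Z}))$ up to the relevant subtleties — in any case $IA^*(\Gamma)$ is a finitely generated group). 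This guarantees that the profinite completion $\widehat{IA^*(\Gamma)}$ behaves well and that the natural map $\widehat{IA^*(\Gamma)}\to IA^*(\hat\Gamma)$ is continuous with image a closed subgroup.

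The core of the argument is then: apply part (1) of Theorem \ref{thm:The therorem} with $G=IA^*(\Gamma)$ to conclude that the map $IA^*(\Gamma)\to IA^*(\hat\Gamma)$ is \emph{injective}. Next, observe that $IA^*(\hat\Gamma)$, being the kernel of $Aut(\hat\Gamma)\to GL_d(\hat{\mathbb{Z}})$, is a closed — hence profinite — subgroup of $Aut(\hat\Gamma)$; and part (2) tells us that $IA^*(\Gamma)$ is \emph{dense} in it. The composite $IA^*(\Gamma)\hookrightarrow IA^*(\hat\Gamma)$ of a finitely generated group into a profinite group in which it is dense therefore extends, by the universal property of profinite completion, to a continuous surjection $\widehat{IA^*(\Gamma)}\twoheadrightarrow IA^*(\hat\Gamma)$. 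I would then identify this surjection with the natural map appearing in diagram (\ref{eq:diagram}). To upgrade surjectivity to an isomorphism, I would show the map is injective: a finite-index subgroup $U$ of $IA^*(\Gamma)$ of interest pulls back from $IA^*(\hat\Gamma)$, i.e. every finite-index subgroup of $IA^*(\Gamma)$ is ``congruence'' in the appropriate sense — this is precisely the content of $C(IA^*(\Gamma),\Gamma)=\{e\}$ from part (1), rephrased as: the congruence topology and the full profinite topology on $IA^*(\Gamma)$ coincide, so $\widehat{IA^*(\Gamma)}=\varprojlim IA^*(\Gamma)/IA^*(\Gamma)(M)$, and the latter inverse limit injects into $\varprojlim Aut(\Gamma/M_m)=Aut(\hat\Gamma)$ with image inside $IA^*(\hat\Gamma)$.

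Concretely, I would argue as follows. By part (1), $\ker\bigl(\widehat{IA^*(\Gamma)}\to Aut(\hat\Gamma)\bigr)$ — call it $C$ — is trivial: this kernel is exactly $C(IA^*(\Gamma),\Gamma)$ under the identification of the congruence completion, which part (1) asserts is $\{e\}$. By part (2), the image of $IA^*(\Gamma)$, and therefore the image of the compact group $\widehat{IA^*(\Gamma)}$ (which is closed, being a continuous image of a compact group into a Hausdorff group), is dense in $IA^*(\hat\Gamma)$; a closed dense subgroup is everything, so the map is onto $IA^*(\hat\Gamma)$. A continuous bijective homomorphism between profinite groups is an isomorphism of topological groups. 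Hence $\widehat{IA^*(\Gamma)}\cong IA^*(\hat\Gamma)$.

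Finally, to see that this yields Theorem \ref{thm:Thm}, I would feed the isomorphism back into diagram (\ref{eq:diagram}): the five-lemma-type diagram chase on the two exact rows, with the left vertical arrow now an isomorphism and the right vertical arrow $\widehat{Aut^*(\Gamma)}\to Aut^*(\hat\Gamma)$ having kernel $C(Aut^*(\Gamma),\Gamma^*)$ by definition, shows that the middle map $\widehat{Aut(\Gamma)}\to Aut(\hat\Gamma)$ has kernel mapping isomorphically onto $C(Aut^*(\Gamma),\Gamma^*)$, i.e. $C(\Gamma)\cong C(Aut^*(\Gamma),\Gamma^*)$. The main obstacle is not in this corollary itself — which is essentially diagram-chasing plus the compactness/density formalism — but lies entirely in establishing the two parts of Theorem \ref{thm:The therorem}; granting those, the deduction here is routine. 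One point requiring a little care is the exactness of the bottom row of (\ref{eq:diagram}) on the left (i.e. that $IA^*(\hat\Gamma)\to Aut(\hat\Gamma)$ is injective, which is immediate from the definition as a kernel) and the compatibility of the profinite completion of the top exact sequence with the bottom one, for which the excerpt already cites Lemma 2.1 of \cite{key-30}.
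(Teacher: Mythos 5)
Your proposal is correct and follows essentially the same route the paper takes: the paper deduces Corollary~\ref{cor:isomorphism} directly from the two parts of Theorem~\ref{thm:The therorem} (injectivity from part~(1), density from part~(2), then compactness to upgrade a continuous injection with dense image to a topological isomorphism), and then feeds the result into diagram~(\ref{eq:diagram}) to obtain Theorem~\ref{thm:Thm}, exactly as you describe. The only slip is cosmetic: with the paper's definitions one already has $IA^*(\Gamma)=\ker\bigl(Aut(\Gamma)\to Aut(\Gamma^*)\bigr)=\ker\bigl(Aut(\Gamma)\to GL_d(\mathbb{Z})\bigr)$, so your parenthetical distinction between $IA^*(\Gamma)$ and an ``$IA(\Gamma)$'' of finite index is vacuous; and in any case finite generation of $IA^*(\Gamma)$ is not needed for the injective-plus-dense-plus-compact argument.
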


Corollary \ref{cor:isomorphism}, together with chasing diagram (\ref{eq:diagram}),
imply Theorem \ref{thm:Thm}. Corollary \ref{cor:isomorphism} is
a form of combination of congruence subgroup property as well as strong
approximation for the group $IA^{*}\left(\Gamma\right)$. Indeed,
its proof boils down to these results for a suitable $\mathbb{Q}$-unipotent
group. But the reduction is slightly delicate: in $\mathsection$\ref{sec:torsion},
it is shown that the proof of Corollary \ref{cor:isomorphism} (or
Theorem \ref{thm:The therorem}) can be reduced to the case when $\Gamma$
is torsion free. In $\mathsection$\ref{sec:torsion free}, we treat
the torsion free case, by reducing it first from $\Gamma$ to $\Delta$,
when $\Delta$ is the \textquotedbl lattice hull\textquotedbl{} of
$\Gamma$. This $\Delta$ contains $\Gamma$ as a finite index subgroup
and it is contained in its Mal'cev completion $R$. It enjoys the
property that $\log(\Delta)$ is a $\mathbb{Z}$-lattice of the Lie
algebra $L$ of $R$. This fact enables us to give $IA^{*}\left(\Delta\right)$
the structure of the $\mathbb{Z}$-points of a suitable unipotent
group for which the $\hat{\mathbb{Z}}$-points are exactly $IA^{*}(\hat{\Delta})$.
Hence, the classical CSP and strong approximation for this unipotent
group imply Corollary \ref{cor:isomorphism}.

In $\mathsection$\ref{sec:free} we sketch another proof to Corollary
\ref{cor:isomorphism}, which is more direct, in the case where $\Gamma=\Psi_{n,c}$
is a finitely generated free nilpotent group.

Acknowledgments: During the period of the research, the first author
was supported by the Rudin foundation and, not concurrently, by NSF
research training grant (RTG) \# 1502651. The second author is indebted
for support from the NSF (Grant No. DMS-1700165) and the European
Research Council (ERC) under the European Unions Horizon 2020 research
and innovation program (Grant No. 692854).

\section{\label{sec:torsion free}The case of Torsion Free Nilpotent Groups}

In this section we are going to prove Theorem \ref{thm:The therorem}
in the case where $\Gamma$ is torsion free. So let $\Gamma$ be a
finitely generated torsion free nilpotent group. For our convenience,
we will follow the approach presented in \cite{key-32-1}, and consider
$\Gamma$ as a subgroup of $Tr_{1}(n,\mathbb{Z})$, the group of $n\times n$
upper triangular matrices over $\mathbb{Z}$ with $1$-s on the diagonal,
for some $n$ (see Chapter 5 therein). Recall the one-to-one correspondence
given by the maps
\begin{align*}
\log: & Tr_{1}(n,\mathbb{Q})\to Tr_{0}(n,\mathbb{Q})\\
\exp: & Tr_{0}(n,\mathbb{Q})\to Tr_{1}(n,\mathbb{Q})
\end{align*}
where $Tr_{0}(n,\mathbb{Q})$ is the Lie algebra of $n\times n$ upper
triangular matrices with $0$-s on the diagonal. Let $L$ be the Lie
subalgebrs of $Tr_{0}(n,\mathbb{Q})$ spanned by $\log(\Gamma)$.
The following is well known and can be found in \cite{key-32-1} as
well (Chapter 6):
\begin{thm}
\label{thm:malcev}There exists a unique (up to isomorphism) group
$R$, called \uline{the radicable hull of \mbox{$\Gamma$}}, or
\uline{Mal'cev completion of \mbox{$\Gamma$}} with the following
properties:
\begin{itemize}
\item $\Gamma$ is a subgroup of $R$.
\item For every $a\in R$ and $m\in\mathbb{N}$ there exists $b\in R$ such
that $b^{m}=a$.
\item For every $a\in R$ there exists $m\in\mathbb{N}$ such that $a^{m}\in\Gamma$.
\item The group $R$ can be identified with $\exp(L)\leq Tr_{1}(n,\mathbb{Q})$.
\end{itemize}
\end{thm}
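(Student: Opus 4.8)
The plan is to construct $R$ explicitly as $R:=\exp(L)\subseteq Tr_{1}(n,\mathbb{Q})$ and then verify the four listed properties together with uniqueness. The whole argument is powered by the Baker--Campbell--Hausdorff formula: because $Tr_{0}(n,\mathbb{Q})$ is a nilpotent Lie algebra, the series $X*Y=X+Y+\tfrac{1}{2}[X,Y]+\cdots$ terminates after finitely many terms and is given by universal $\mathbb{Q}$-polynomials in $X$ and $Y$, so $\log$ and $\exp$ are mutually inverse bijections between $Tr_{0}(n,\mathbb{Q})$ and $Tr_{1}(n,\mathbb{Q})$ intertwining $*$ with matrix multiplication. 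Since $L$ is a $\mathbb{Q}$-Lie subalgebra it is closed under $*$ and under $X\mapsto-X$ (which corresponds to inversion), so $R=\exp(L)$ is a subgroup of $Tr_{1}(n,\mathbb{Q})$; this gives the fourth bullet, and by construction $R$ is nilpotent and torsion free. A small preliminary point worth isolating first is that for a finitely generated nilpotent $\Gamma$ the $\mathbb{Q}$-span of $\log(\Gamma)$ is already a Lie subalgebra, hence equals $L$ --- this follows by induction on the nilpotency class from $\log(\gamma^{k})=k\log\gamma$ and the BCH expansion of $\log(\gamma\delta)$.

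The first bullet is immediate: by definition $\log(\Gamma)\subseteq L$, so $\Gamma=\exp(\log\Gamma)\subseteq\exp(L)=R$. For the second bullet, the $\mathbb{Q}$-vector-space structure of $L$ equips $R$ with a rational power operation $a^{q}:=\exp(q\log a)$; since $[X,X]=0$, BCH degenerates to $\exp(sX)\exp(tX)=\exp((s+t)X)$, so $t\mapsto a^{t}$ is a homomorphism $\mathbb{Q}\to R$ and in particular $(a^{1/m})^{m}=a$, proving radicability. Torsion-freeness and uniqueness of $m$-th roots in $R$ are equally immediate, since $a^{m}=\exp(m\log a)$ and $\exp$ is injective; I will need both for the uniqueness statement.

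The crux is the third bullet: every $a\in R$ has a positive power lying in $\Gamma$. The obstacle is that $\exp$ is not additive, so knowing that $\log(\Gamma)$ spans $L$ over $\mathbb{Q}$ does not directly place any power of $a$ inside $\Gamma$. I would argue by induction on the nilpotency class, filtering by lower central series. One checks (pure BCH bookkeeping) that $[R,R]=\exp([L,L])$, that $[L,L]$ is the $\mathbb{Q}$-span of $\log([\Gamma,\Gamma])$, and that $[\Gamma,\Gamma]$ is a finitely generated torsion-free nilpotent group of smaller class; so $[R,R]$ is the radicable hull of $[\Gamma,\Gamma]$ and the inductive hypothesis applies to it. On the abelian quotient $R/[R,R]\cong L/[L,L]$ the map induced by $\exp$ is an isomorphism of abelian groups, and the image of $\Gamma$ is a finitely generated subgroup spanning the $\mathbb{Q}$-vector space $L/[L,L]$, i.e.\ a full lattice; hence some power $a^{m_{1}}$ lies in $\Gamma\cdot[R,R]$, say $a^{m_{1}}=\gamma r$ with $\gamma\in\Gamma$ and $r\in[R,R]$, and by induction $r^{m_{2}}\in[\Gamma,\Gamma]\subseteq\Gamma$. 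To pass from here to ``$a$ has a power in $\Gamma$'' I would invoke the classical fact that in a torsion-free nilpotent group the isolator $\sqrt{\Gamma}=\{g\in R:g^{k}\in\Gamma\text{ for some }k\geq1\}$ of a subgroup is again a subgroup: it contains $\gamma$ and $r$, hence their product $a^{m_{1}}$, hence $a$ itself.

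Uniqueness is the standard extension argument. If $\Gamma\leq R'$ is another group with the three properties, define $\phi\colon R\to R'$ by picking for each $a\in R$ an $m$ with $a^{m}\in\Gamma$ and letting $\phi(a)$ be the unique $m$-th root in $R'$ of the element $a^{m}\in\Gamma\subseteq R'$; independence of the choice of $m$, the homomorphism property, injectivity and surjectivity all reduce to uniqueness of roots in radicable torsion-free nilpotent groups, a staple of Mal'cev theory. I expect the only genuinely delicate part to be the third bullet --- precisely the bookkeeping identifying $[R,R]$ with the radicable hull of $[\Gamma,\Gamma]$ and keeping the lattice in each successive quotient of full rank --- while everything else is BCH formalism or a citation to the basic structure theory of nilpotent groups. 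Since the statement is classical, in the paper it is of course simply quoted from \cite{key-32-1}.
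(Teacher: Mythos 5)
You are correct that the paper offers no proof: Theorem~\ref{thm:malcev} is simply quoted from Segal's book \cite{key-32-1}, Chapter 6, so there is no ``paper approach'' to compare against. Your sketch of a proof is sound, and it is essentially the standard one: build $R=\exp(L)$, verify the group structure and radicability by BCH formalism, and reduce the ``every element has a power in $\Gamma$'' clause to an induction along the lower central series using the identifications $[R,R]=\exp([L,L])$ and $[L,L]=\mathbb{Q}\text{-span of }\log(\Gamma')$ (both of which the paper itself uses in the discussion surrounding Lemma~\ref{lem:prime}). One remark on the crux step: you outsource the passage from ``$a^{m_{1}}=\gamma r$ with $\gamma\in\Gamma$, $r\in\sqrt{\Gamma}$'' to ``$a\in\sqrt{\Gamma}$'' to P.~Hall's isolator theorem (the isolator of a subgroup of a nilpotent group is a subgroup). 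That theorem is genuinely independent of Mal'cev theory, so there is no circularity, but be aware you are invoking a result of comparable depth to the one you are proving; the more self-contained route is to expand $(\gamma r)^{n}$ via the Hall--Petrescu collection formula and clear denominators level by level along the lower central series, which is precisely what the isolator theorem packages. Your uniqueness argument via unique roots in a radicable torsion-free nilpotent group is the standard one and is fine, with the minor caveat that to make the statement match what you prove you should read the first three bullets as implicitly asserting that $R$ is a torsion-free nilpotent group (which bullet four guarantees for the constructed $R$); otherwise ``unique up to isomorphism among groups satisfying the three bullets'' would need an extra word explaining why any such $R$ is forced to be nilpotent.
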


The connection between the group operation of $R$ and the Lie algebra
opration of $L$ is given through the Baker-Campbell-Hausdorf (BCH)
formula. One can use it in order to prove the following lemma (\cite{key-31},
Lemma 2.1):
\begin{lem}
\label{lem:prime}Under the correspondence between the underlying
sets of $R$ and $L$ one has $R'=L'$. This equality gives a natural
group isomorphism between $R/R'$ and the additive group $L/L'$. 
\end{lem}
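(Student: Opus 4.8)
The plan is to prove the sharper, purely Lie-theoretic statement: for \emph{any} nilpotent Lie algebra $L$ over $\mathbb{Q}$, with associated group $R=\exp(L)$ under the Baker--Campbell--Hausdorff (BCH) product, one has $\log(R')=[L,L]$ as subsets of the common underlying set. The Mal'cev completion of $\Gamma$ is the instance at hand; the advantage of this formulation is that it is inherited by quotients $L/\mathfrak{a}$ by an ideal $\mathfrak a$ (with group $\exp(L/\mathfrak a)$), which is what lets me induct on the nilpotency class $c$ of $L$. Write $L=L_1\supseteq L_2=[L,L]\supseteq\cdots\supseteq L_c\supseteq L_{c+1}=0$ for the lower central series, so $L'=L_2$.

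One inclusion is immediate. For $X,Y\in L$, the BCH expansion of $\log\bigl(\exp(X)\exp(Y)\exp(-X)\exp(-Y)\bigr)$ is a finite sum of iterated Lie brackets, each of total degree $\ge 2$ in $X,Y$, hence an element of $[L,L]$; and since $L'$ is an ideal, it is closed under the BCH product, so $\exp(L')$ is a (normal) subgroup of $R$ that contains every group commutator. Therefore $R'\subseteq\exp(L')$.

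The real content is the reverse inclusion $\exp(L')\subseteq R'$, and the obstacle is precisely that $\exp$ is not additive, so one cannot directly recover $\exp([X,Y])$ as a group commutator up to lower-order noise. I would resolve this by peeling off the top of the lower central series, where the noise vanishes: if $W\in L_{c-1}$ and $X\in L$, then a BCH term of degree $d\ge 3$ in $\log[\exp(X),\exp(W)]$ involves $X$ exactly $p\ge1$ times and $W$ exactly $q\ge1$ times with $p+q=d$, hence lies in $L_{p+q(c-1)}$, and $p+q(c-1)=d+q(c-2)\ge c+1$, so that term is $0$. Thus $[\exp(X),\exp(W)]=\exp([X,W])$ \emph{exactly}. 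Since $L_c$ is central, $\exp$ restricts to an isomorphism $(L_c,+)\xrightarrow{\ \sim\ }$ a central subgroup of $R$; as $L_c=[L,L_{c-1}]$ is $\mathbb{Q}$-spanned by brackets $[X,W]$ and $R$ is radicable (so $\exp(\lambda X)\in R$ for all $\lambda\in\mathbb{Q}$), any $Z=\sum_i\lambda_i[X_i,W_i]\in L_c$ satisfies $\exp(Z)=\prod_i[\exp(\lambda_i X_i),\exp(W_i)]\in R'$, giving $\exp(L_c)\subseteq R'$. Passing to $\bar R=R/\exp(L_c)=\exp(L/L_c)$, a group of class $c-1$, the inductive hypothesis gives $\exp(L_2/L_c)\subseteq\bar R'$, and lifting back (together with $\exp(L_c)\subseteq R'$) yields $\exp(L')=\exp(L_2)\subseteq R'$. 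The base case $c\le1$ is trivial. Hence $R'=\exp(L')$, which under the $\exp/\log$ correspondence is the asserted equality $R'=L'$.

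For the second statement, once $R'=\exp(L')$ is known, the bijection $\exp\colon L\to R$ carries $L'$ onto $R'$ and so descends to a bijection $L/L'\to R/R'$. To see it is a group homomorphism it suffices to check this after replacing $L$ by $L/L_3$ and $R$ by $R/R_3$ (since $L/L'=(L/L_3)/(L_2/L_3)$ and likewise $R/R'=(R/R_3)/(R_2/R_3)$, compatibly with the descended map). In the resulting class-$\le2$ situation, BCH reads $\exp(X)\exp(Y)=\exp\!\bigl(X+Y+\tfrac12[X,Y]\bigr)$ with $[X,Y]$ central and $[L,L']=0$; hence $\exp(X+c)=\exp(X)\exp(c)$ for $c\in L'$, and additivity of the descended map is then a one-line verification. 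I expect the reverse inclusion in the third paragraph --- locating enough of $\exp([X,Y])$ inside the honest derived subgroup $R'$ --- to be the only genuine difficulty; the rest is bookkeeping with the BCH formula.
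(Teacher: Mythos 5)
Your argument is correct. The paper itself does not prove this lemma---it cites Lemma~2.1 of Bryant--Groves \cite{key-31} with the remark that the BCH formula ``can be used'' to prove it---so what you have done is supply the self-contained BCH proof that the citation is standing in for. The structure you chose for the hard inclusion $\exp(L')\subseteq R'$ is the natural one: isolate the top layer $L_c$, where the degree count $p+q(c-1)=d+q(c-2)\ge c+1$ makes BCH collapse to $[\exp(X),\exp(W)]=\exp([X,W])$ on the nose, conclude $\exp(L_c)\subseteq R'$, then quotient by $\exp(L_c)$ and induct on the class. This is executed correctly, including the lift $\exp(L'/L_c)\subseteq R'/\exp(L_c)\Rightarrow\exp(L')\subseteq R'$.

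One small inaccuracy in the second half: to verify that the descended bijection is a homomorphism you reduce to ``$L/L_3$ and $R/R_3$,'' which tacitly uses $R_3=\exp(L_3)$, whereas you have only established $R_2=\exp(L_2)$. The cheap fix is to quotient $R$ by $\exp(L_3)$ rather than by $R_3$: since $L_3$ is an ideal, $\exp(L_3)$ is normal in $R$ (BCH conjugation) and lies inside $\exp(L_2)=R'$, so $R/R'\cong\bigl(R/\exp(L_3)\bigr)/\bigl(\exp(L_2)/\exp(L_3)\bigr)$, and $R/\exp(L_3)=\exp(L/L_3)$ is exactly the class-$\le 2$ group you want. (The equality $R_i=\exp(L_i)$ for every $i$ is in fact true and follows from the same peeling argument, but you do not need it here.) Alternatively, the reduction can be skipped entirely: for $A\in L$ and $B\in L'$ one has $\log\bigl(\exp(-A)\exp(A+B)\bigr)=B+(\text{iterated brackets})\in L'$, so $\exp(A+B)\in\exp(A)\exp(L')=\exp(A)R'$; applying this with $A=X+Y$ and $B$ the sum of the higher BCH terms of $\exp(X)\exp(Y)$ gives additivity of the descended map in one line.
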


One can use the BCH formula in order to prove that $L'$ is the $\mathbb{Q}$-span
of $\log(\Gamma')$, and hence $R'$ can be identified with the radicable
hull of $\Gamma'$. Denote $\delta(\Gamma)=\ker(\Gamma\to\Gamma^{*}\simeq\mathbb{Z}^{d})$.
Then, as any element of $\delta(\Gamma)$ has some power in $\Gamma'$,
we have $\log(\delta(\Gamma))\subseteq L'$, and hence $L'$ is also
the $\mathbb{Q}$-span of $\log(\delta(\Gamma))$, and $R'$ is also
the radicable hull of $\delta(\Gamma)$. One gets from this that (\cite{key-31},
Lemma 2.2):
\begin{lem}
\label{lem:delta}We have $\Gamma\cap R'=\delta(\Gamma)$ and $\dim_{\mathbb{Q}}(R/R')=\textrm{rank}_{\mathbb{Z}}(\Gamma/\delta(\Gamma))=d$.
\end{lem}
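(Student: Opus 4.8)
The plan is to read off both statements from Lemma \ref{lem:prime} together with the two facts recorded just above the statement: that $L'$ is the $\mathbb{Q}$-span of $\log(\Gamma')$ (so that $R'$ is the radicable hull of $\Gamma'$, and also of $\delta(\Gamma)$), and that $R/R'$ is torsion-free. The latter is immediate: by Lemma \ref{lem:prime} the correspondence between underlying sets carries $R/R'$ isomorphically onto the additive group $L/L'$, and since $L$ is a $\mathbb{Q}$-Lie subalgebra of $Tr_{0}(n,\mathbb{Q})$ while $L'$ is the $\mathbb{Q}$-subspace it spans, $L/L'$ is a $\mathbb{Q}$-vector space; in particular it is torsion-free as an abelian group.

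For the equality $\Gamma\cap R'=\delta(\Gamma)$, first note that, being the preimage in $\Gamma$ of $tor(\bar{\Gamma})$, the subgroup $\delta(\Gamma)$ consists exactly of those $g\in\Gamma$ with $g^{m}\in\Gamma'$ for some $m\in\mathbb{N}$. If $g\in\delta(\Gamma)$ then $g^{m}\in\Gamma'\subseteq R'$ for some $m$, and since $R/R'$ is torsion-free this forces $g\in R'$; hence $\delta(\Gamma)\subseteq\Gamma\cap R'$. Conversely, if $g\in\Gamma\cap R'$ then, since $R'$ is the radicable hull of $\Gamma'$, some positive power of $g$ lies in $\Gamma'$, so $g\in\delta(\Gamma)$. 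This proves the first assertion.

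For the dimension count, the point is that $\dim_{\mathbb{Q}}(R/R')=d$ (the equality $\textrm{rank}_{\mathbb{Z}}(\Gamma/\delta(\Gamma))=d$ being just the definition of $d$ via $\Gamma/\delta(\Gamma)\cong\Gamma^{*}\cong\mathbb{Z}^{(d)}$). Using Lemma \ref{lem:prime} again, $R/R'\cong L/L'$ is a $\mathbb{Q}$-vector space and the isomorphism is induced by $\log$, so the image of $\Gamma$ in $R/R'$ corresponds to the image of $\log(\Gamma)$ in $L/L'$; since $L$ is spanned by $\log(\Gamma)$, this image spans $R/R'$ over $\mathbb{Q}$. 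By the first part, the image of $\Gamma$ in $R/R'$ is $\Gamma R'/R'\cong\Gamma/(\Gamma\cap R')=\Gamma/\delta(\Gamma)\cong\mathbb{Z}^{(d)}$, a free abelian group of rank $d$ that spans the $\mathbb{Q}$-vector space $R/R'$. Hence $\dim_{\mathbb{Q}}(R/R')=d$, completing the proof. There is no serious obstacle here: the real content lies in the BCH-based facts already granted, and the only thing needing care is keeping the identifications straight — in particular, that the isomorphism $R/R'\cong L/L'$ of Lemma \ref{lem:prime} is the one induced by $\log$, so that ``$L$ is spanned by $\log(\Gamma)$'' translates correctly into a statement about the image of $\Gamma$ in $R/R'$.
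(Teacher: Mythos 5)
Your proof is correct, and it is the natural deduction from exactly the facts the paper assembles in the two paragraphs preceding the statement (that $R'$ is the radicable hull of $\Gamma'$ and of $\delta(\Gamma)$, together with the identification $R/R'\cong L/L'$ from Lemma \ref{lem:prime}); the paper itself simply cites \cite{key-31}, Lemma~2.2 rather than spelling out the argument, but the phrase \textquotedblleft one gets from this\textquotedblright{} indicates the intended reasoning is the same as yours. Both inclusions, the torsion-freeness of $R/R'$, and the rank/dimension count are handled correctly.
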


The following can be found in \cite{key-32-1}, Chapter 6:
\begin{prop}
\label{prop:lattice}There exists a unique minimal intermediate subgroup
$\Gamma\leq\Delta\leq R$, called the \uline{lattice hull} of $\Gamma$,
that its image in $L$, namely $\log(\Delta)$, is a lattice. I.e.
$\log(\Delta)$ is a free $\mathbb{Z}$-module that spans $L$ over
$\mathbb{Q}$. One has $[\Delta:\Gamma]<\infty$.
\end{prop}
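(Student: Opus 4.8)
The plan is to split the statement into three parts: existence of an intermediate subgroup $\Gamma\le\Delta\le R$ with $\log(\Delta)$ a lattice in $L$; uniqueness of the minimal such subgroup; and finiteness of $[\Delta:\Gamma]$. Only existence is substantial. I would begin by recording that the additive subgroup $\Lambda_{0}:=\langle\log(\Gamma)\rangle_{\mathbb Z}$ of $L$ is itself a lattice. It has full rank, since for every $v\in L$ we have $\exp(v)\in R$ by Theorem~\ref{thm:malcev}, hence $\exp(mv)=\exp(v)^{m}\in\Gamma$ for some $m$, i.e.\ $mv\in\log(\Gamma)$. It is finitely generated, since $\Gamma\subseteq Tr_{1}(n,\mathbb Z)$ together with the terminating logarithm series gives $\log(\Gamma)\subseteq\tfrac{1}{(n-1)!}\,Tr_{0}(n,\mathbb Z)$, which is a lattice in $Tr_{0}(n,\mathbb Q)$. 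I would also note that any $\Delta$ as above satisfies $\log(\Delta)\supseteq\Lambda_{0}$, because $\log(\Delta)$ is then an additive subgroup containing $\log(\Gamma)$.

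For existence, the task is to enlarge $\Lambda_{0}$ to a lattice $\Lambda\subseteq L$ that is closed under the Baker--Campbell--Hausdorff product, so that $\Delta:=\exp(\Lambda)$ is a subgroup of $R$ containing $\Gamma$. I would argue by induction on the nilpotency class $c$ of $\Gamma$. If $c\le 1$ then $\Gamma$ is abelian, $\log$ restricts to a homomorphism onto the lattice $\log(\Gamma)=\Lambda_{0}$, and $\Delta=\Gamma$ works. If $c\ge 2$: by Lemmas~\ref{lem:prime} and~\ref{lem:delta}, $R/R'$ is the Mal'cev completion of the abelian group $\Gamma/\delta(\Gamma)\cong\mathbb Z^{d}$, and $\log$ identifies it with $L/L'$, so $\log$ carries $\Gamma/\delta(\Gamma)$ isomorphically onto a lattice in $L/L'$; while $R'=\exp(L')$ is the Mal'cev completion of $\delta(\Gamma)$, which has nilpotency class at most $\lfloor c/2\rfloor<c$ (as $\gamma_{k}(L')\subseteq\gamma_{2k}(L)$), so by induction $\delta(\Gamma)$ has a lattice hull inside $R'$. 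Lifting a $\mathbb Z$-basis of $\Gamma/\delta(\Gamma)$ to elements $e_{1},\dots,e_{d}\in\Gamma$ and forming $\Delta:=\langle e_{1},\dots,e_{d}\rangle\cdot\exp(\Lambda')$ for a suitable BCH-closed, $\mathrm{Ad}(e_{i})$-invariant lattice $\Lambda'\subseteq L'$ containing the log of that hull, one obtains a subgroup of $R$ whose log meets $L'$ in a lattice of $L'$ and surjects onto a lattice of $L/L'$. The key point — and the main obstacle — is to choose $\Lambda'$ fine enough that $\log(\Delta)$ becomes additively closed, hence a lattice of $L$: the defect $\log g+\log h-\log(gh)$ always lies in $L'$, and in $Tr_{0}(n,\mathbb Q)$ every such iterated-bracket correction has denominators bounded by a constant depending only on $n$ (the relevant series being finite); so after enlarging $\Lambda'$ to a sufficiently divisible lattice — still BCH-closed and $\mathrm{Ad}$-invariant, by the same bounded-denominator reasoning — all defects land in $\log(\Delta)\cap L'$, and one iterates this observation down the lower central series of $L'$. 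This is exactly the content of Mal'cev's correspondence between $\mathbb Q$-powered nilpotent groups and rational nilpotent Lie algebras; beyond this bookkeeping there is nothing to prove.

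Granting existence, let $\mathcal F$ be the nonempty family of subgroups $\Delta'$ with $\Gamma\le\Delta'\le R$ and $\log(\Delta')$ a lattice. Since $\log\colon R\to L$ is a bijection of underlying sets, $\Lambda_{\min}:=\bigcap_{\Delta'\in\mathcal F}\log(\Delta')$ corresponds to $\Delta:=\bigcap_{\Delta'\in\mathcal F}\Delta'$, a subgroup of $R$ containing $\Gamma$; and $\Lambda_{\min}$ is a lattice, being an additive subgroup of $L$ that contains $\Lambda_{0}$ (hence of full rank) and is contained in some fixed $\log(\Delta'_{0})$ (hence finitely generated, as a subgroup of a finitely generated abelian group). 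Therefore $\Delta\in\mathcal F$, and since it lies inside every member it is the unique minimal one. Finally, by the standard properties of lattices in $L$, $\Delta$ is a finitely generated torsion-free nilpotent group of Hirsch length $\operatorname{rank}_{\mathbb Z}\Lambda_{\min}=\dim_{\mathbb Q}L$, which equals the Hirsch length of $\Gamma$ (whose Mal'cev completion is $R$); since a subgroup of a polycyclic group of equal Hirsch length has finite index, $[\Delta:\Gamma]<\infty$.
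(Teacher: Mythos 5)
The paper does not supply its own proof of Proposition~\ref{prop:lattice}; it is quoted from Segal's \emph{Polycyclic Groups}, Chapter 6 (reference \cite{key-32-1}). So there is nothing in the paper to compare against line by line, and your proof has to be judged on its own.

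The uniqueness and finite-index parts of your argument are correct and clean. The observation that $\Lambda_{0}=\langle\log\Gamma\rangle_{\mathbb Z}$ is a full-rank, finitely generated additive subgroup of $L$ is right (full rank from Mal'cev's properties of $R$, finite generation from $\log(\Gamma)\subseteq\frac{1}{(n-1)!}Tr_{0}(n,\mathbb Z)$). Granting that $\mathcal F$ is nonempty, the intersection $\Delta=\bigcap_{\Delta'\in\mathcal F}\Delta'$ is a subgroup containing $\Gamma$ with $\log(\Delta)=\bigcap\log(\Delta')$ a lattice (full rank since each $\log(\Delta')\supseteq\Lambda_{0}$; finitely generated as a subgroup of one fixed $\log(\Delta'_{0})$), so $\Delta$ is the desired minimum. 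And $[\Delta:\Gamma]<\infty$ because both have Hirsch length $\dim_{\mathbb Q}L$.

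The gap is in the existence step. You reduce by induction on the class $c$ to choosing a ``BCH-closed, $\mathrm{Ad}(e_{i})$-invariant lattice $\Lambda'\subseteq L'$'' and then assert that ``after enlarging $\Lambda'$ to a sufficiently divisible lattice --- still BCH-closed and $\mathrm{Ad}$-invariant, by the same bounded-denominator reasoning --- all defects land in $\log(\Delta)\cap L'$.'' This is not justified. Replacing a BCH-closed lattice $\Lambda'$ by a more divisible one such as $\frac{1}{N}\Lambda'$ does not obviously keep it BCH-closed: when $x,y\in\frac{1}{N}\Lambda'$, the term $\frac{1}{2}[x,y]$ lives in $\frac{1}{2N^{2}}[\Lambda',\Lambda']$, and whether this lies in $\frac{1}{N}\Lambda'$ depends on the interaction between $N$, the lower-central filtration of $\Lambda'$, and the BCH denominators, none of which you control explicitly. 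Likewise, the reason $\log(\Delta)$ for $\Delta=\langle e_{i}\rangle\cdot\exp(\Lambda')$ becomes additively closed is precisely what must be proved, and ``iterating down the lower central series'' is a plan, not an argument. In short, the passage from ``bounded denominators'' to an actual BCH-closed lattice is the content of the proposition, and it is the one step you wave at.

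This can be repaired, and in fact without the induction. Let $D$ be a common denominator for all coefficients in the (terminating) BCH series in class $\le c$, and set
\[
\Lambda \;=\; \sum_{j=1}^{c} D^{-(j-1)}\,\gamma_{j}(\Lambda_{0}),
\]
where $\gamma_{j}(\Lambda_{0})$ is the $\mathbb Z$-span of $j$-fold brackets of elements of $\Lambda_{0}$. An $m$-fold bracket of elements of $\Lambda$ with $m\ge 2$ lies in $\sum_{k\ge m}D^{-(k-m)}\gamma_{k}(\Lambda_{0})$, so after multiplying by a BCH coefficient in $\frac{1}{D}\mathbb Z$ it lies in $\sum_{k}D^{-(k-m+1)}\gamma_{k}(\Lambda_{0})\subseteq\Lambda$ (using $m\ge2$). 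Hence $\Lambda$ is a lattice closed under the BCH product and under negation, $\exp(\Lambda)$ is a subgroup of $R$ containing $\Gamma$ with $\log$ a lattice, and $\mathcal F\neq\varnothing$. Your intersection argument then finishes the proof. This is, I believe, essentially the argument Segal gives; your inductive detour through $\delta(\Gamma)$ and the lifted generators $e_{1},\dots,e_{d}$ adds complications (normality and $\mathrm{Ad}$-invariance bookkeeping) without removing the central difficulty.
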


\begin{rem}
Notice that $R$ is also the radicable hull of $\Delta$. 
\end{rem}

Given a set $X\subseteq L$ denote $N_{Aut(L)}(X)=\{g\in Aut(L)\,|\,g(X)=X\}$.
The following can also be found in \cite{key-32-1}, Chapter 6:
\begin{thm}
\label{thm:aut-corr}The correspondence between $R$ and $L$ induces
an isomorphism
\[
Aut(R)\simeq Aut(L)\leq GL_{k}(\mathbb{Q})
\]
where $k=\dim(L)$. Under this isomorphism, one can identify
\begin{align*}
Aut(\Gamma) & \cong N_{Aut(L)}(\log(\Gamma))\leq Aut(L)\\
Aut(\Delta) & \cong N_{Aut(L)}(\log(\Delta))\leq Aut(L).
\end{align*}
From Proposition \ref{prop:lattice}, it follows that $Aut(\Gamma)\leq Aut(\Delta)\leq Aut(R)\simeq Aut(L)$.
I.e. any automorphism of $\Gamma$ can be uniquely extended to an
automorphism of $\Delta$, and any of the latter can be uniquely extended
to an automorphism of $R$.
\end{thm}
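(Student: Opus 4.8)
The plan is to prove the three assertions of the theorem in the order they are stated: first the global isomorphism $Aut(R)\simeq Aut(L)$, then the identification of $Aut(\Gamma)$ and $Aut(\Delta)$ with lattice stabilizers inside $Aut(L)$, and finally the chain of inclusions.

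\emph{Step 1: $Aut(R)\simeq Aut(L)$.} For $\phi\in Aut(R)$ set $\phi_{*}:=\log\circ\,\phi\circ\exp:L\to L$, and for $\psi\in Aut(L)$ set $\psi^{*}:=\exp\circ\,\psi\circ\log:R\to R$. Granting that $\phi_{*}$ is always a Lie algebra automorphism and $\psi^{*}$ always a group automorphism, these two assignments are manifestly mutually inverse and compatible with composition, so they give an isomorphism $Aut(R)\cong Aut(L)$; moreover $Aut(L)\le GL_{k}(\mathbb{Q})$ since a Lie automorphism of the $k$-dimensional $\mathbb{Q}$-vector space $L$ is in particular a linear automorphism. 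The substance is the claim in parentheses, and here is where BCH is used: because $L$ is nilpotent, the group law of $R$, read in the coordinates supplied by $\log$, is a \emph{finite} universal polynomial in the Lie bracket and the $\mathbb{Q}$-linear structure of $L$; conversely, $u+v$ and $[u,v]$ in $L$ are recovered from the group operations of $R$ together with extraction of $m$-th roots (iterated commutators and roots of $\exp u$, $\exp v$). Since $\phi$ preserves multiplication and, by the unique radicability of the torsion-free nilpotent group $R$ (Theorem \ref{thm:malcev}), preserves root extraction, $\phi_{*}$ preserves $+$ and $[\cdot,\cdot]$; the same BCH identities applied the other way show $\psi^{*}\in Aut(R)$.

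\emph{Step 2: the stabilizer description.} The key lemma is that every $\phi\in Aut(\Gamma)$ extends uniquely to some $\tilde\phi\in Aut(R)$. Uniqueness follows directly from the properties in Theorem \ref{thm:malcev}: for $a\in R$ pick $m$ with $a^{m}\in\Gamma$; the $m$-th root in $R$ is unique, so any extension must send $a$ to the unique $m$-th root of $\phi(a^{m})$. For existence, define $\tilde\phi$ by that formula and check, using uniqueness of roots, that it is independent of $m$ and is a homomorphism. Thus $Aut(\Gamma)\hookrightarrow Aut(R)\cong Aut(L)$, with $\phi$ mapping to $\psi:=(\tilde\phi)_{*}$. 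Since $\log\circ\tilde\phi=\psi\circ\log$ on $R$, and $\tilde\phi(\Gamma)=\Gamma$ because $\tilde\phi|_{\Gamma}=\phi$, we get $\psi(\log\Gamma)=\log\Gamma$, i.e.\ $\psi\in N_{Aut(L)}(\log\Gamma)$. Conversely any $\psi\in N_{Aut(L)}(\log\Gamma)$ gives $\psi^{*}\in Aut(R)$ with $\psi^{*}(\Gamma)=\Gamma$, hence an element of $Aut(\Gamma)$, and the two constructions are inverse; so $Aut(\Gamma)\cong N_{Aut(L)}(\log\Gamma)$. The same argument with $\Delta$ replacing $\Gamma$ (valid since $R$ is also the Mal'cev completion of $\Delta$, by the Remark after Proposition \ref{prop:lattice}) yields $Aut(\Delta)\cong N_{Aut(L)}(\log\Delta)$.

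\emph{Step 3: the inclusions.} Trivially $N_{Aut(L)}(\log\Delta)\subseteq Aut(L)\cong Aut(R)$, giving $Aut(\Delta)\le Aut(R)$, so it remains to show $N_{Aut(L)}(\log\Gamma)\subseteq N_{Aut(L)}(\log\Delta)$, i.e.\ that an automorphism of $R$ fixing $\Gamma$ setwise also fixes $\Delta$ setwise. This is where Proposition \ref{prop:lattice} is essential: $\Delta$ is intrinsically characterized from the pair $(\Gamma,R)$ as the unique minimal intermediate subgroup $\Gamma\le\Delta\le R$ with $\log\Delta$ a $\mathbb{Z}$-lattice of $L$. If $\psi\in Aut(L)$ fixes $\log\Gamma$, then $\psi$ carries lattices to lattices and induces an order-preserving bijection of the intermediate subgroups, so $\psi(\log\Delta)$ is again a minimal lattice between $\log\Gamma$ and $L$; by uniqueness $\psi(\log\Delta)=\log\Delta$. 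Under the identifications this reads $Aut(\Gamma)\le Aut(\Delta)\le Aut(R)\simeq Aut(L)$, equivalently: every automorphism of $\Gamma$ extends uniquely to $\Delta$ and every automorphism of $\Delta$ extends uniquely to $R$.

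I expect Step 1 to be the main obstacle, since it is the point where the purely set-theoretic dictionary between $R$ and $L$ must be upgraded to a dictionary between their automorphism groups; this rests on the BCH formula making the Lie operations definable from the group operations together with (unique) root extraction, and it is precisely torsion-freeness and radicability of $R$ that make those roots unique and hence make $\phi_{*}$ well behaved. Once that is in place, the extension lemma of Step 2 is a routine consequence of the same unique radicability, and Step 3 is a soft argument using only the canonical description of $\Delta$ from Proposition \ref{prop:lattice}.
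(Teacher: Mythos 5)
Your proof is correct and follows the standard line of argument for this result; note that the paper itself gives no proof here but simply cites Segal's \emph{Polycyclic Groups}, Chapter 6, and what you have written is essentially the argument found there: the BCH dictionary together with unique radicability gives $Aut(R)\cong Aut(L)$, unique root-extraction gives the canonical extension $Aut(\Gamma)\hookrightarrow Aut(R)$ identifying $Aut(\Gamma)$ with the stabilizer of $\log\Gamma$, and the characterization of $\Delta$ in Proposition \ref{prop:lattice} as the unique minimal intermediate group with lattice image forces $N_{Aut(L)}(\log\Gamma)\subseteq N_{Aut(L)}(\log\Delta)$.
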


Theorem \ref{thm:aut-corr} and Lemma \ref{lem:delta} imply:
\begin{prop}
\label{prop:IA*}Under the above notation we can identify 
\begin{align*}
IA^{*}(\Gamma) & =Aut(\Gamma)\cap\ker(Aut(R)\to Aut(R/R'))\\
 & \cong N_{Aut(L)}(\log(\Gamma))\cap\ker(Aut(L)\to Aut(L/L'))
\end{align*}
\begin{align*}
IA^{*}(\Delta) & =Aut(\Delta)\cap\ker(Aut(R)\to Aut(R/R'))\\
 & \cong N_{Aut(L)}(\log(\Delta))\cap\ker(Aut(L)\to Aut(L/L')).
\end{align*}
\end{prop}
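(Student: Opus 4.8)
The plan is to read off both identities from Theorem~\ref{thm:aut-corr}, Lemma~\ref{lem:prime} and Lemma~\ref{lem:delta}, the point being simply that an automorphism acts trivially on $\Gamma^{*}$ exactly when its extension to $R$ acts trivially on $R/R'$. First I would record that $\delta(\Gamma)$ is a characteristic subgroup of $\Gamma$: it is the preimage in $\Gamma$ of $\mathrm{tor}(\Gamma/\Gamma')$, and both $\Gamma'$ and $\mathrm{tor}(\Gamma/\Gamma')$ are characteristic. Hence $Aut(\Gamma)$ acts on $\Gamma/\delta(\Gamma)\cong\Gamma^{*}$, and by definition $IA^{*}(\Gamma)$ is the kernel of this action. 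Via the identification $Aut(R)\simeq Aut(L)$ of Theorem~\ref{thm:aut-corr}, each $\phi\in Aut(\Gamma)=N_{Aut(L)}(\log\Gamma)$ extends uniquely to $\hat\phi\in Aut(R)$; since $\hat\phi$ preserves $R'=L'$ by Lemma~\ref{lem:prime}, it induces a linear automorphism of the $\mathbb{Q}$-vector space $R/R'\cong L/L'$, and this is precisely the homomorphism $Aut(R)\to Aut(R/R')$ (resp.\ $Aut(L)\to Aut(L/L')$) appearing in the statement.

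Next I would check the compatibility of these actions. Because $\hat\phi$ preserves both $\Gamma$ and $R'$, it preserves the subgroup $\Gamma R'/R'$ of $R/R'$, and under the canonical isomorphism $\Gamma R'/R'\cong\Gamma/(\Gamma\cap R')=\Gamma/\delta(\Gamma)$ supplied by Lemma~\ref{lem:delta}, the restriction of $\hat\phi$ to $\Gamma R'/R'$ is exactly the action of $\phi$ on $\Gamma/\delta(\Gamma)\cong\Gamma^{*}$; this follows from the uniqueness of the extension in Theorem~\ref{thm:aut-corr} together with the functoriality of passing to the quotient by the derived subgroup.

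Now the key observation: by Lemma~\ref{lem:delta} the lattice $\Gamma/\delta(\Gamma)$ has rank $d=\dim_{\mathbb{Q}}(R/R')$, so it spans $R/R'$ over $\mathbb{Q}$. Therefore the linear map $\hat\phi$ on $R/R'$ is the identity if and only if it fixes this spanning set pointwise, i.e.\ if and only if $\phi$ acts trivially on $\Gamma^{*}$. This yields $IA^{*}(\Gamma)=Aut(\Gamma)\cap\ker(Aut(R)\to Aut(R/R'))$, and transporting through $Aut(R)\simeq Aut(L)$ and $R/R'\cong L/L'$ gives the stated identification with $N_{Aut(L)}(\log\Gamma)\cap\ker(Aut(L)\to Aut(L/L'))$. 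Finally, the same argument applies verbatim to $\Delta$: as noted above $R$ is also the radicable hull of $\Delta$, so $\delta(\Delta)=\Delta\cap R'$ and $\Gamma/\delta(\Gamma)\subseteq\Delta/\delta(\Delta)\subseteq R/R'$ shows $\Delta/\delta(\Delta)$ is again a full-rank lattice in $R/R'$, giving $IA^{*}(\Delta)=Aut(\Delta)\cap\ker(Aut(R)\to Aut(R/R'))\cong N_{Aut(L)}(\log\Delta)\cap\ker(Aut(L)\to Aut(L/L'))$. There is no serious obstacle here; the only step that deserves care is the compatibility of the two actions in the second paragraph, and even that is immediate from the uniqueness of the Mal'cev extension.
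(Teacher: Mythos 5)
Your proof is correct and follows essentially the same route the paper intends: the paper simply states that the proposition follows from Theorem~\ref{thm:aut-corr} together with Lemma~\ref{lem:delta}, and your write-up spells out exactly those implications, with the key point being that $\Gamma/\delta(\Gamma)$ is a full-rank lattice spanning $R/R'$ over $\mathbb{Q}$, so an automorphism of $R/R'$ fixing it pointwise is the identity.
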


An immediate corollary of Proposition \ref{prop:IA*} and Theorem
\ref{thm:aut-corr} is that $IA^{*}(\Gamma)$ is naturally embedded
in $IA^{*}(\Delta)$. We would like now to show that the same property
is valid also for the profinte completions of $\Gamma$ and $\Delta$.
Namely:
\begin{prop}
\label{prop:profinite completion}Any automorphism of $\hat{\Gamma}$
can be uniquely extended to an automorphism of $\hat{\Delta}$. In
particular, $IA^{*}(\hat{\Gamma})$ is naturally embedded as a finite
index subgroup of $IA^{*}(\hat{\Delta})$.
\end{prop}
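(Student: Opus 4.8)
The plan is to reduce the statement to a finite family of statements about pro-$p$ completions, each settled by the same Mal'cev correspondence used in Theorems \ref{thm:malcev} and \ref{thm:aut-corr}, but now over $\mathbb{Q}_{p}$ instead of $\mathbb{Q}$. First, since $[\Delta:\Gamma]<\infty$ and $\Gamma,\Delta$ are finitely generated, every finite index subgroup of $\Gamma$ is of finite index in $\Delta$; hence the profinite topology of $\Delta$ restricts on $\Gamma$ to the profinite topology of $\Gamma$, and $\hat{\Gamma}$ is embedded in $\hat{\Delta}$ as an open subgroup of index $[\Delta:\Gamma]$. As $\Gamma,\Delta$ are finitely generated nilpotent, $\hat{\Gamma}=\prod_{p}\hat{\Gamma}_{(p)}$ and $\hat{\Delta}=\prod_{p}\hat{\Delta}_{(p)}$ over the primes $p$, with $\hat{\Gamma}_{(p)},\hat{\Delta}_{(p)}$ the (torsion free, since $\Gamma,\Delta$ are torsion free) pro-$p$ completions, and $\hat{\Gamma}_{(p)}$ open in $\hat{\Delta}_{(p)}$ of index the $p$-part of $[\Delta:\Gamma]$; in particular $\hat{\Gamma}_{(p)}=\hat{\Delta}_{(p)}$ whenever $p\nmid[\Delta:\Gamma]$. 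Since the Sylow pro-$p$ subgroup of a pronilpotent group is characteristic, every automorphism of $\hat{\Gamma}$ is $\prod_{p}\phi_{p}$ with $\phi_{p}\in Aut(\hat{\Gamma}_{(p)})$, and likewise $Aut(\hat{\Delta})=\prod_{p}Aut(\hat{\Delta}_{(p)})$. So it is enough to prove, for each of the finitely many $p$ dividing $[\Delta:\Gamma]$, that every continuous automorphism of $\hat{\Gamma}_{(p)}$ extends uniquely to $\hat{\Delta}_{(p)}$.

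Fix such a $p$. I would repeat the picture developed above, with $\mathbb{Q}_{p}$ in place of $\mathbb{Q}$: inside $R(\mathbb{Q}_{p}):=\exp(L\otimes_{\mathbb{Q}}\mathbb{Q}_{p})$ one has $\hat{\Gamma}_{(p)}\leq\hat{\Delta}_{(p)}\leq R(\mathbb{Q}_{p})$ as compact open subgroups, $R(\mathbb{Q}_{p})$ is radicable, and every element of it has a power in $\hat{\Gamma}_{(p)}$; so, by the $p$-adic analogue of Theorems \ref{thm:malcev} and \ref{thm:aut-corr} (equally standard), $R(\mathbb{Q}_{p})$ is the radicable hull of $\hat{\Gamma}_{(p)}$ and every continuous automorphism of $\hat{\Gamma}_{(p)}$ extends uniquely to a continuous automorphism of $R(\mathbb{Q}_{p})$, acting $\mathbb{Q}_{p}$-linearly on $L\otimes\mathbb{Q}_{p}$ and hence sending compact subgroups to compact subgroups and $\mathbb{Z}_{p}$-lattices of $L\otimes\mathbb{Q}_{p}$ to $\mathbb{Z}_{p}$-lattices. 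By the lattice-hull construction carried out over $\mathbb{Z}_{p}$ (Proposition \ref{prop:lattice}), $\hat{\Delta}_{(p)}$ is characterised intrinsically inside $R(\mathbb{Q}_{p})$: it is the minimal compact subgroup containing $\hat{\Gamma}_{(p)}$ whose image under $\log$ is a $\mathbb{Z}_{p}$-lattice. Therefore the extension to $R(\mathbb{Q}_{p})$ of an automorphism $\phi_{p}$ of $\hat{\Gamma}_{(p)}$, which fixes $\hat{\Gamma}_{(p)}$, also fixes this minimal subgroup $\hat{\Delta}_{(p)}$, and its restriction is the sought extension of $\phi_{p}$. Uniqueness is immediate: $\hat{\Delta}_{(p)}$ is a torsion free nilpotent pro-$p$ group, so it has unique roots, and every element of it has a $p$-power in the open subgroup $\hat{\Gamma}_{(p)}$, so an automorphism of $\hat{\Delta}_{(p)}$ is determined by its restriction to $\hat{\Gamma}_{(p)}$.

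Assembling over $p$, every automorphism of $\hat{\Gamma}$ extends uniquely to $\hat{\Delta}$; by uniqueness this gives an injective group homomorphism $Aut(\hat{\Gamma})\hookrightarrow Aut(\hat{\Delta})$ with image the stabiliser $N_{Aut(\hat{\Delta})}(\hat{\Gamma})=\{\psi\in Aut(\hat{\Delta}):\psi(\hat{\Gamma})=\hat{\Gamma}\}$. Since $\hat{\Delta}$ is finitely generated it has only finitely many open subgroups of index $[\Delta:\Gamma]$, and $Aut(\hat{\Delta})$ permutes them, so $N_{Aut(\hat{\Delta})}(\hat{\Gamma})$ has finite index in $Aut(\hat{\Delta})$. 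Finally, $IA^{*}(\hat{\Gamma})$ and $IA^{*}(\hat{\Delta})$ are, by definition, the kernels of the actions of $Aut(\hat{\Gamma})$ and $Aut(\hat{\Delta})$ on $\widehat{\Gamma^{*}}$ and $\widehat{\Delta^{*}}\cong\hat{\mathbb{Z}}^{d}$; since $\Gamma^{*}\hookrightarrow\Delta^{*}$ has finite cokernel and $\hat{\mathbb{Z}}$ is flat over $\mathbb{Z}$, $\widehat{\Gamma^{*}}$ is a finite index subgroup of the torsion free group $\widehat{\Delta^{*}}$, and these identifications are compatible with the $\psi$-action; hence a $\psi\in N_{Aut(\hat{\Delta})}(\hat{\Gamma})$ lies in $IA^{*}(\hat{\Gamma})$ iff it lies in $IA^{*}(\hat{\Delta})$. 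Thus the embedding above restricts to an isomorphism of $IA^{*}(\hat{\Gamma})$ onto $N_{Aut(\hat{\Delta})}(\hat{\Gamma})\cap IA^{*}(\hat{\Delta})$, which is of finite index in $IA^{*}(\hat{\Delta})$, as claimed.

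The only delicate point is the per-prime step, for the finitely many small primes $p$ dividing $[\Delta:\Gamma]$: the Baker--Campbell--Hausdorff series need not converge $p$-adically on $\log(\hat{\Gamma}_{(p)})$, which in general is not a $\mathbb{Z}_{p}$-Lie lattice, so one cannot naively identify $Aut(\hat{\Gamma}_{(p)})$ with a stabiliser in $Aut(L\otimes\mathbb{Q}_{p})$ directly. The resolution is to run the argument on the saturated group $\hat{\Delta}_{(p)}$, whose $\log$ is a $\mathbb{Z}_{p}$-lattice by the very construction of the lattice hull, and to bring in $\hat{\Gamma}_{(p)}$ only as an open subgroup of $\hat{\Delta}_{(p)}$; in that formulation the extension problem for $\phi_{p}$ collapses to the harmless uniqueness-of-roots statement rather than to any convergence estimate.
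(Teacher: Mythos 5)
Your main argument follows the same path as the paper: split into pro-$p$ completions, use a $p$-adic Mal'cev/$\log$-$\exp$ correspondence to push automorphisms of $\hat{\Gamma}_{(p)}$ out to the radicable hull over $\mathbb{Q}_p$, and then invoke the intrinsic minimality of the lattice hull to see they preserve $\hat{\Delta}_{(p)}$; finally assemble over $p$ and check the finite-index and $IA^{*}$ statements. This is, in compressed form, exactly the chain Lemma~\ref{lem:minimality^}, Theorem~\ref{thm:identification^}, Proposition~\ref{prop:IA-identification^} in the paper, and your handling of the $IA^{*}$ identification and the finite-index bound is consistent with the paper's shorter observation that $IA^{*}(\hat{\Gamma})=\{\alpha\in IA^{*}(\hat{\Delta})\mid\alpha(\hat{\Gamma})=\hat{\Gamma}\}$ is open. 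One small implicit step: you write $L\otimes_{\mathbb{Q}}\mathbb{Q}_{p}$ where the paper works with $L_{p}=\mathbb{Q}_{p}\textrm{-span of }\log(\Gamma_{p})$; the two agree, but only after the dimension count $\dim_{\mathbb{Q}}L=\dim_{\mathbb{Q}_{p}}L_{p}$ (Corollary~\ref{cor:dim}), which the paper proves via the derived series and is not entirely free.

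The closing ``delicate point'' and the patch you propose for it, however, are both off. For a nilpotent Lie algebra of bounded class, the Baker--Campbell--Hausdorff series is a \emph{polynomial} with rational coefficients, so there is no $p$-adic convergence issue at any prime, and $\log$ is a bijection $\hat{\Gamma}_{(p)}\to\log(\hat{\Gamma}_{(p)})\subseteq L_{p}$ regardless of whether $\log(\hat{\Gamma}_{(p)})$ is a $\mathbb{Z}_{p}$-lattice. Consequently $Aut(\hat{\Gamma}_{(p)})\cong N_{Aut(L_{p})}(\log(\hat{\Gamma}_{(p)}))$ (Theorem~\ref{thm:identification^}) holds for \emph{every} $p$, by the same argument as in the rational case; no lattice hypothesis on $\log(\hat{\Gamma}_{(p)})$ is used. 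More seriously, even if that identification were genuinely in doubt, your proposed workaround would not close the gap: unique roots in $\hat{\Delta}_{(p)}$ give \emph{uniqueness} of an extension of $\phi_{p}$, not \emph{existence}. Given $x\in\hat{\Delta}_{(p)}$ with $x^{p^{b}}\in\hat{\Gamma}_{(p)}$, you would need $\phi_{p}(x^{p^{b}})$ to possess a $p^{b}$-th root in the pro-$p$ group $\hat{\Delta}_{(p)}$, which is not divisible, and showing such a root exists essentially forces you back into the Mal'cev picture for $\hat{\Gamma}_{(p)}$ that you were trying to avoid. Drop the caveat and the patch; the first two paragraphs of your argument are the correct proof, and they rely on the $p$-adic Mal'cev correspondence for $\hat{\Gamma}_{(p)}$ at all primes, which is fine.
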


In order to prove Proposition \ref{prop:profinite completion}, we
are going to show that one can describe the relation between $IA^{*}(\hat{\Gamma})$
and $IA^{*}(\hat{\Delta})$ in a very similar way to the description
of the relation between $IA^{*}(\Gamma)$ and $IA^{*}(\Delta)$ above.
Before we do that, let us present an immediate consequence of Proposition
\ref{prop:profinite completion}:
\begin{prop}
\label{prop:moving}Let $\Gamma$ be a finitely generated torsion
free nilpotent group, and let $\Delta$ be the lattice hull of $\Gamma$.
Let $G\leq IA^{*}(\Gamma)\leq IA^{*}(\Delta)$. Then:
\begin{enumerate}
\item If $\hat{G}\to IA^{*}(\hat{\Delta})$ is injective, then $\hat{G}\to IA^{*}(\hat{\Gamma})$
is injective.
\item If $IA^{*}(\Delta)$ is dense in $IA^{*}(\hat{\Delta})$, then $IA^{*}(\Gamma)$
is dense in $IA^{*}(\hat{\Gamma})$. 
\end{enumerate}
\end{prop}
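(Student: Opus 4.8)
The plan is to exploit Proposition \ref{prop:profinite completion}, which gives a canonical embedding $\iota\colon IA^{*}(\hat{\Gamma})\hookrightarrow IA^{*}(\hat{\Delta})$ whose image is a finite index subgroup, together with the compatibility of $\iota$ with the various natural ``congruence'' maps. Write $j_{\Gamma}\colon IA^{*}(\Gamma)\to IA^{*}(\hat{\Gamma})$ and $j_{\Delta}\colon IA^{*}(\Delta)\to IA^{*}(\hat{\Delta})$ for these maps, and let $f\colon\hat{G}\to IA^{*}(\hat{\Gamma})$ be the continuous extension of $G\to IA^{*}(\hat{\Gamma})$, which exists since $IA^{*}(\hat{\Gamma})$, being a closed subgroup of the profinite group $Aut(\hat{\Gamma})$, is itself profinite. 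The first thing I would record is the compatibility $\iota\circ j_{\Gamma}=j_{\Delta}|_{IA^{*}(\Gamma)}$: indeed each $g\in Aut(\Gamma)$ extends uniquely to some $g_{\Delta}\in Aut(\Delta)$ (Theorem \ref{thm:aut-corr}), the induced automorphism $\widehat{g_{\Delta}}$ of $\hat{\Delta}$ restricts on $\hat{\Gamma}=\overline{\Gamma}$ (the closure of $\Gamma$ in $\hat{\Delta}$) to $\hat{g}$, and so by the uniqueness clause of Proposition \ref{prop:profinite completion} it must coincide with $\iota$ applied to $\hat{g}$. Extending by continuity over the dense subgroup $G$ then gives $\iota\circ f=$ (the map $\hat{G}\to IA^{*}(\hat{\Delta})$ appearing in the statement), since two continuous maps into a Hausdorff group that agree on $G$ are equal.

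Granting this, part (1) is immediate: the map $\hat{G}\to IA^{*}(\hat{\Delta})$ is $\iota\circ f$, and if a composite is injective then so is its first factor $f$, which is precisely the asserted injectivity of $\hat{G}\to IA^{*}(\hat{\Gamma})$.

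For part (2) I would argue as follows. The image $\iota(IA^{*}(\hat{\Gamma}))$ is a compact, hence closed, finite index subgroup of $IA^{*}(\hat{\Delta})$, and therefore open; by hypothesis $j_{\Delta}(IA^{*}(\Delta))$ is dense in $IA^{*}(\hat{\Delta})$, so the intersection $j_{\Delta}(IA^{*}(\Delta))\cap\iota(IA^{*}(\hat{\Gamma}))$ is dense in the open subgroup $\iota(IA^{*}(\hat{\Gamma}))$. It thus suffices to prove
\[
j_{\Delta}(IA^{*}(\Delta))\cap\iota(IA^{*}(\hat{\Gamma}))=j_{\Delta}(IA^{*}(\Gamma)),
\]
for then $j_{\Delta}(IA^{*}(\Gamma))$ is dense in $\iota(IA^{*}(\hat{\Gamma}))$, and pushing through the homeomorphism $\iota^{-1}$ shows $j_{\Gamma}(IA^{*}(\Gamma))$ is dense in $IA^{*}(\hat{\Gamma})$. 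The inclusion $\supseteq$ is exactly the compatibility $\iota\circ j_{\Gamma}=j_{\Delta}|_{IA^{*}(\Gamma)}$ established above. For $\subseteq$, take $\alpha\in IA^{*}(\Delta)$ with $j_{\Delta}(\alpha)\in\iota(IA^{*}(\hat{\Gamma}))$; then the extension $\hat{\alpha}\in Aut(\hat{\Delta})$ preserves $\hat{\Gamma}$. Using that finitely generated nilpotent groups are subgroup separable — so that $\hat{\Gamma}$ is the closure $\overline{\Gamma}$ of $\Gamma$ in $\hat{\Delta}$ and $\overline{\Gamma}\cap\Delta=\Gamma$ — we get $\alpha(\Gamma)=\hat{\alpha}(\overline{\Gamma}\cap\Delta)\subseteq\overline{\Gamma}\cap\Delta=\Gamma$, and symmetrically $\alpha^{-1}(\Gamma)\subseteq\Gamma$, so $\alpha\in Aut(\Gamma)$. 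Finally, since $Aut(\Gamma)\leq Aut(\Delta)$ (Theorem \ref{thm:aut-corr}), Proposition \ref{prop:IA*} gives $\alpha\in Aut(\Gamma)\cap IA^{*}(\Delta)=Aut(\Gamma)\cap\ker(Aut(R)\to Aut(R/R'))=IA^{*}(\Gamma)$, as required.

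The only point that needs genuine care — and the step I expect to be the main obstacle — is the bookkeeping around the identification $\hat{\Gamma}=\overline{\Gamma}$ inside $\hat{\Delta}$ and the compatibility $\iota\circ j_{\Gamma}=j_{\Delta}|_{IA^{*}(\Gamma)}$: one must verify that the extension map $Aut(\hat{\Gamma})\to Aut(\hat{\Delta})$ furnished by Proposition \ref{prop:profinite completion} is indeed the one obtained by extending automorphisms of $\Delta$ and completing, and one must use subgroup separability both to realize $\hat{\Gamma}$ as the closure of $\Gamma$ in $\hat{\Delta}$ and to recover $\Gamma$ as $\overline{\Gamma}\cap\Delta$. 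Once these identifications are in place, the rest is formal manipulation with dense subgroups and open subgroups of profinite groups.
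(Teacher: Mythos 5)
Your argument is correct and follows essentially the same route as the paper: both rely on Proposition \ref{prop:profinite completion} and, for part (2), on the identity $IA^{*}(\Delta)\cap IA^{*}(\hat{\Gamma})=IA^{*}(\Gamma)$ together with openness of $IA^{*}(\hat{\Gamma})$ in $IA^{*}(\hat{\Delta})$. The paper's own proof is extremely terse (it calls both parts ``immediate'' and states the identity without justification), whereas you make explicit both the compatibility $\iota\circ j_{\Gamma}=j_{\Delta}|_{IA^{*}(\Gamma)}$ and, via the characterization $IA^{*}(\hat{\Gamma})=\{\alpha\in IA^{*}(\hat{\Delta})\,|\,\alpha(\hat{\Gamma})=\hat{\Gamma}\}$ from the proof of Proposition \ref{prop:profinite completion}, the inclusion $IA^{*}(\Delta)\cap IA^{*}(\hat{\Gamma})\subseteq IA^{*}(\Gamma)$. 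These fill-ins are accurate. One small remark: you do not need the full strength of subgroup separability for finitely generated nilpotent groups. Since $[\Delta:\Gamma]<\infty$ and $\Delta$ is residually finite, $\Gamma$ is automatically closed in the profinite topology of $\Delta$; and since every finite index subgroup of $\Gamma$ contains a finite index normal subgroup of $\Delta$, the closure $\overline{\Gamma}\subseteq\hat{\Delta}$ is indeed naturally $\hat{\Gamma}$. The finite index case is all that is used. Also note that once you have $\alpha(\Gamma)\subseteq\Gamma$ with $\alpha\in Aut(\Delta)$ and $[\Delta:\Gamma]<\infty$, equality $\alpha(\Gamma)=\Gamma$ is forced by index counting, so the symmetric step with $\alpha^{-1}$ can be skipped.
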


\begin{proof}
The first statment is an immediate corollary of Proposition \ref{prop:profinite completion}.
The second part also follows immediately from Proposition \ref{prop:profinite completion}
since $IA^{*}(\Delta)\cap IA^{*}(\hat{\Gamma})=IA^{*}(\Gamma)$.
\end{proof}
Proposition \ref{prop:moving} shows us that in order to prove Theorem
\ref{thm:The therorem} for finitely generated torsion free nilpotent
group $\Gamma$, it is enough to show it for its lattice hull $\Delta$.
We turn now to describe the relation between $Aut(\hat{\Gamma})$
and $Aut(\hat{\Delta})$. The description is going to give more than
just a proof to Proposition \ref{prop:profinite completion}, and
we are going to use it also toward the rest of the section. 

Let $\Gamma_{p}$ be the pro-$p$ completion of $\Gamma$. As $\Gamma$
is nilpotent, we have $\hat{\Gamma}=\prod_{p}\Gamma_{p}$. In addition,
as $\Gamma$ is finitely generated and unipotent, it is arithmetic
(\cite{key-32-1}, Chapter 6). Hence, by the affirmative solution
to the congruence subgroup problem for arithmetic soluble groups (see
\cite{key-33,key-34,key-35}), we can view $\hat{\Gamma}$ as the
closure of $\Gamma$ under the map
\[
\Gamma\hookrightarrow Tr_{1}(n,\mathbb{Z})\to Tr_{1}(n,\hat{\mathbb{Z}})=\prod_{p}Tr_{1}(n,\mathbb{Z}_{p}).
\]
As $Tr_{1}(n,\mathbb{Z}_{p})$ is a pro-$p$ group, and $\hat{\Gamma}=\prod_{p}\Gamma_{p}$,
it follows that we can identify $\Gamma_{p}$ with the closure of
$\Gamma$ under the map
\[
\Gamma\hookrightarrow Tr_{1}(n,\mathbb{Z})\to\prod_{p}Tr_{1}(n,\mathbb{Z}_{p})\to Tr_{1}(n,\mathbb{Z}_{p}).
\]

Extending $\log:Tr_{1}(n,\mathbb{Q}_{p})\to Tr_{0}(n,\mathbb{Q}_{p})$
and $\exp:Tr_{0}(n,\mathbb{Q}_{p})\to Tr_{1}(n,\mathbb{Q}_{p})$,
$\log$ and $\exp$ are continuous with relation to the topology induced
by $\mathbb{Q}_{p}$. We define $L_{p}$ to be the $\mathbb{Q}$-span
of $\log(\Gamma_{p})$ and $R_{p}=\exp(L_{p})$.
\begin{lem}
\label{lem:Q-p algebra}The set $L_{p}$ is a $\mathbb{Q}_{p}$-Lie
algebra.
\end{lem}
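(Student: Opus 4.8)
The plan is to show that $L_p = L\otimes_{\mathbb{Q}}\mathbb{Q}_p$ as subsets of $Tr_0(n,\mathbb{Q}_p)$; once this is established the lemma is immediate, since $L$ is a $\mathbb{Q}$-Lie algebra and hence $L\otimes_{\mathbb{Q}}\mathbb{Q}_p$ is a $\mathbb{Q}_p$-Lie algebra. Write $V\subseteq Tr_0(n,\mathbb{Q}_p)$ for the $\mathbb{Q}_p$-subspace spanned by $L$; since $\dim_{\mathbb{Q}}L=k<\infty$, the space $V$ is $k$-dimensional over $\mathbb{Q}_p$ and therefore closed in $Tr_0(n,\mathbb{Q}_p)$ for the $p$-adic topology. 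I will also use that $\log\colon Tr_1(n,\mathbb{Q}_p)\to Tr_0(n,\mathbb{Q}_p)$ and $\exp\colon Tr_0(n,\mathbb{Q}_p)\to Tr_1(n,\mathbb{Q}_p)$ are continuous, mutually inverse bijections (the defining power series truncate because the matrices involved are nilpotent) and that $\exp(x)^m=\exp(mx)$ for $m\in\mathbb{Z}$.

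First I would prove $L_p\subseteq V$. By construction $\Gamma$ (more precisely its image) is dense in $\Gamma_p$ inside $Tr_1(n,\mathbb{Z}_p)$, so by continuity of $\log$ we get $\log(\Gamma_p)\subseteq\overline{\log(\Gamma)}\subseteq\overline{L}$; since $L\subseteq V$ and $V$ is closed, $\log(\Gamma_p)\subseteq V$, and taking the $\mathbb{Q}$-span (harmless, as $V$ is already a $\mathbb{Q}$-subspace) gives $L_p\subseteq V$.

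The reverse inclusion $V\subseteq L_p$ is the heart of the matter. Fix $a_1,\dots,a_k\in\Gamma$ with $\log a_1,\dots,\log a_k$ a $\mathbb{Q}$-basis of $L$ (possible, e.g.\ by taking a Mal'cev basis of $\Gamma$); these then form a $\mathbb{Q}_p$-basis of $V$. The key claim is that $\Gamma_p$ is stable under $\mathbb{Z}_p$-powers: given $a\in\Gamma$ and $\lambda\in\mathbb{Z}_p$, choose integers $m_i\to\lambda$; then $a^{m_i}=\exp(m_i\log a)\in\Gamma\subseteq\Gamma_p$, and by continuity of $\exp$ these converge to $\exp(\lambda\log a)$; since $\Gamma_p$ is closed in the compact group $Tr_1(n,\mathbb{Z}_p)$, we conclude $\exp(\lambda\log a)\in\Gamma_p$, whence $\lambda\log a=\log\bigl(\exp(\lambda\log a)\bigr)\in\log(\Gamma_p)$. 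Applying this to each $a_j$ yields $\mathbb{Z}_p\cdot\log a_j\subseteq\log(\Gamma_p)\subseteq L_p$ for every $j$. Since $L_p$ is a $\mathbb{Q}$-subspace and every element of $\mathbb{Q}_p$ has the form $q\mu$ with $q\in\mathbb{Q}$ and $\mu\in\mathbb{Z}_p$, this forces $\mathbb{Q}_p\cdot\log a_j\subseteq L_p$, and summing over $j$ gives $V=\sum_j\mathbb{Q}_p\log a_j\subseteq L_p$.

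Combining the two inclusions, $L_p=V=L\otimes_{\mathbb{Q}}\mathbb{Q}_p$, a $\mathbb{Q}_p$-Lie subalgebra of $Tr_0(n,\mathbb{Q}_p)$; in particular $L_p$ is a $\mathbb{Q}_p$-vector space closed under the bracket, which is the assertion of the lemma. The only genuinely delicate point is the $\mathbb{Z}_p$-power step: one must ensure that $\exp(\lambda\log a)$, the natural candidate for ``$a^{\lambda}$'', really lands back in $\Gamma_p$, and this is exactly where the closedness of $\Gamma_p$ in $Tr_1(n,\mathbb{Z}_p)$ together with the $p$-adic continuity of $\exp$ are indispensable; everything else is elementary linear algebra and point-set topology.
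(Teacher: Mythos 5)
Your proof is correct, and the central technical step is the same one the paper uses: you produce $\exp(\lambda\log a)\in\Gamma_p$ for $\lambda\in\mathbb{Z}_p$ by approximating $\lambda$ by integers and appealing to continuity of $\exp$ and closedness of $\Gamma_p$ in $Tr_1(n,\mathbb{Z}_p)$; the paper packages exactly this via the homomorphism $\rho_g\colon\mathbb{Z}_p\to\Gamma_p$, $\rho_g(\lambda)=\exp(\lambda\log g)$, and continuity of $\log$. Where you diverge is in the surrounding framing: rather than merely verifying that $L_p$ (which a priori is only a $\mathbb{Q}$-Lie algebra by BCH) is stable under $\mathbb{Z}_p$-scalars, you prove the sharper identification $L_p=V=L\otimes_{\mathbb{Q}}\mathbb{Q}_p$, from which the $\mathbb{Q}_p$-Lie algebra structure is automatic. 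This buys you something genuine: the equality $L_p=L\otimes_{\mathbb{Q}}\mathbb{Q}_p$ immediately yields $\dim_{\mathbb{Q}_p}(L_p)=\dim_{\mathbb{Q}}(L)$, which the paper derives separately (Corollary \ref{cor:dim}) by a more involved induction on derived series. The paper's version is more minimal — it proves only what Lemma \ref{lem:Q-p algebra} asserts and defers the dimension count — while yours front-loads a cleaner structural statement at the cost of having to establish both inclusions $L_p\subseteq V$ and $V\subseteq L_p$ explicitly. Both are valid; your approach is a mild strengthening rather than a different proof.
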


\begin{proof}
The BCH clearly gives $L_{p}$ a structure of a $\mathbb{Q}$-Lie
algebra, just like it gives $L$. We just need to explain why $L_{p}$
is closed under multiplication of scalars from $\mathbb{Q}_{p}$.
By definition, it is enough to show that it is closed under multiplication
of scalars from $\mathbb{Z}_{p}$. So let $g\in\Gamma_{p}$ and let
$m=\underset{i\to\infty}{\lim}m_{i}\in\mathbb{Z}_{p}$ for some $m_{i}\in\mathbb{Z}$.
Let $\rho_{g}$ be the natural homomorphism $\rho_{g}:\mathbb{Z}_{p}\to\Gamma_{p}$
defined by sending the generator of $\mathbb{Z}_{p}$ to $g$. Then,
as $\log$ is continuous we have
\begin{align*}
\log(\rho_{g}(m)) & =\log(\rho_{g}(\underset{i\to\infty}{\lim}m_{i}))=\underset{i\to\infty}{\lim}\log(\rho_{g}(m_{i}))=\underset{i\to\infty}{\lim}\log(g^{m_{i}})\\
 & =\underset{i\to\infty}{\lim}(m_{i}\cdot\log(g))=(\underset{i\to\infty}{\lim}m_{i})\cdot\log(g)=m\cdot\log(g).
\end{align*}
It follows that the set $\log(\Gamma_{p})$ is closed under multiplication
by elements from $\mathbb{Z}_{p}$, and so is $L_{p}$.
\end{proof}
The proof of the following is similar to the proof of the corresponding
properties of the Mal'cev completion in Theorem \ref{thm:malcev}:
\begin{lem}
\label{lem:R_p'}The set $R_{p}$ is a group containing $\Gamma,R$
and $\Gamma_{p}$. Moreover, $R_{p}$ is a Mal'cev completion of $\Gamma_{p}$
in the sense that it satisfies the following properties:
\begin{itemize}
\item For every $a\in R_{p}$ and $m\in\mathbb{N}$ there exists $b\in R_{p}$
such that $b^{m}=a$. 
\item For every $a\in R_{p}$ there exists $m\in\mathbb{N}$ such that $a^{m}\in\Gamma_{p}$.
\end{itemize}
\end{lem}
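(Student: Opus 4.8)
The plan is to follow, step by step, the template of the proof of Theorem~\ref{thm:malcev}, transporting each ingredient from $\mathbb{Q}$ to $\mathbb{Q}_{p}$. First I would record that, by Lemma~\ref{lem:Q-p algebra}, $L_{p}$ is a finite-dimensional nilpotent Lie algebra over $\mathbb{Q}_{p}$ inside $Tr_{0}(n,\mathbb{Q}_{p})$, so the Baker--Campbell--Hausdorff series $X*Y$ is a \emph{finite} $\mathbb{Q}$-linear combination of iterated brackets of $X$ and $Y$; since $L_{p}$ is a $\mathbb{Q}_{p}$-subspace closed under the bracket, $X*Y\in L_{p}$ whenever $X,Y\in L_{p}$. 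As $\exp(X)\exp(Y)=\exp(X*Y)$ is a polynomial identity valid over $\mathbb{Q}_{p}$, and $\exp(0)=I$, $\exp(X)^{-1}=\exp(-X)$, the set $R_{p}=\exp(L_{p})$ is a subgroup of $Tr_{1}(n,\mathbb{Q}_{p})$. The three containments are then immediate: $\log\Gamma\subseteq\log\Gamma_{p}\subseteq L_{p}$ gives $\Gamma\subseteq R_{p}$; $\log\Gamma_{p}\subseteq L_{p}$ gives $\Gamma_{p}\subseteq R_{p}$; and since $L_{p}$ is in particular a $\mathbb{Q}$-vector space containing $\log\Gamma$, it contains $L=\mathrm{span}_{\mathbb{Q}}\log\Gamma$, so $R=\exp(L)\subseteq R_{p}$ (and likewise $\Delta\le R\subseteq R_{p}$). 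Radicability is equally soft: given $a=\exp(X)$ and $m\in\mathbb{N}$, the element $b=\exp(m^{-1}X)\in R_{p}$ satisfies $b^{m}=\exp(m^{-1}X)^{m}=\exp(X)=a$, because $t\mapsto\exp(tX)$ is a one-parameter subgroup (again by BCH, as $[X,X]=0$).

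The substance of the lemma is the remaining assertion: every $a\in R_{p}$ has a power in $\Gamma_{p}$, equivalently, every $X\in L_{p}$ has an integer multiple in $\log\Gamma_{p}$. Here I would bring in the lattice hull $\Delta$ of Proposition~\ref{prop:lattice}, just as one does over $\mathbb{Q}$. The key preliminary observation is that $L_{p}=\mathbb{Q}_{p}\otimes_{\mathbb{Q}}L$: the inclusion $\supseteq$ follows from $L\subseteq L_{p}$, while $\subseteq$ follows because $\log$ is continuous, so $\log\Gamma_{p}$ lies in the closure of $\log\Gamma$ in $Tr_{0}(n,\mathbb{Q}_{p})$, which is the finite-dimensional (hence closed) $\mathbb{Q}_{p}$-subspace $\mathbb{Q}_{p}\otimes L$. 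Since $\log\Delta$ is a full-rank $\mathbb{Z}$-lattice in $L$ (Proposition~\ref{prop:lattice}) and $\log$ restricted to $R_{p}$ is a homeomorphism onto $L_{p}$, the closure $\Delta_{p}$ of $\Delta$ inside $R_{p}$ satisfies $\log\Delta_{p}=\mathbb{Z}_{p}\!\cdot\!\log\Delta$, a full-rank $\mathbb{Z}_{p}$-lattice spanning $L_{p}$. Hence, given $X\in L_{p}$, clearing denominators yields $p^{e}X\in\log\Delta_{p}$, i.e.\ $a^{p^{e}}=\exp(p^{e}X)\in\Delta_{p}$. Finally, writing $\Delta=\bigsqcup_{i=1}^{N}d_{i}\Gamma$ and taking closures gives $\Delta_{p}=\bigcup_{i=1}^{N}d_{i}\Gamma_{p}$, so $\Gamma_{p}$ has finite index in $\Delta_{p}$; letting $f$ be the exponent of the finite group $\Delta_{p}/\mathrm{core}_{\Delta_{p}}(\Gamma_{p})$, we get $(a^{p^{e}})^{f}=a^{p^{e}f}\in\Gamma_{p}$, as required.

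I expect the one delicate point to be precisely the implication ``$p^{e}X\in\log\Delta_{p}\Rightarrow a^{p^{e}}\in\Delta_{p}$, and then a bounded power lands in $\Gamma_{p}$'': one must be careful that $\Delta_{p}$, defined as the closure of $\Delta$ in the non-compact ambient group $R_{p}\le Tr_{1}(n,\mathbb{Q}_{p})$, really is the image under $\exp$ of the $\mathbb{Z}_{p}$-lattice $\mathbb{Z}_{p}\!\cdot\!\log\Delta$ and is a compact subgroup of finite index over $\Gamma_{p}$. This is where one uses that $\exp$ and $\log$ are mutually inverse polynomial homeomorphisms between $L_{p}$ and $R_{p}$ (so the two notions of closure agree), that $\log\Delta$ is an honest $\mathbb{Z}$-submodule (so its closure in $L_{p}$ is exactly $\mathbb{Z}_{p}\!\cdot\!\log\Delta$), and the finite-union description $\Delta_{p}=\bigcup_{i}d_{i}\Gamma_{p}$ above. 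An alternative that avoids $\Delta$ altogether is to invoke the existence of an open uniform (in Lazard's sense) subgroup $U\le\Gamma_{p}$ for which $\log U$ is a $\mathbb{Z}_{p}$-Lie lattice spanning $L_{p}$, and to clear denominators into $\log U$; but the lattice-hull argument has the advantage of running in exact parallel to the proof of Theorem~\ref{thm:malcev}, as the phrasing of the lemma suggests.
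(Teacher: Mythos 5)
Your proof is correct, and it fills in the details that the paper leaves implicit (the paper only says the argument is ``similar to the proof of \dots\ Theorem~\ref{thm:malcev}''). The group structure, the containments, and radicability are exactly the soft BCH/$\exp$ observations one expects, and the substance is indeed the cofiniteness bullet. Your route for that bullet---pass through the lattice hull $\Delta$, show $L_{p}=\mathbb{Q}_{p}\cdot L$ and $\log\Delta_{p}=\mathbb{Z}_{p}\cdot\log\Delta$ by continuity of $\log$ and finite-dimensionality, clear $p$-denominators to land in $\Delta_{p}$, then drop from $\Delta_{p}$ to $\Gamma_{p}$ via the exponent of the finite quotient $\Delta_{p}/\mathrm{core}_{\Delta_{p}}(\Gamma_{p})$---is a clean $p$-adic transposition of the classical argument, and matches the template the lemma invites.

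Two small remarks on presentation rather than correctness. First, you are essentially re-deriving facts that the paper states a little later (the identity $\log\Delta_{p}=\mathbb{Z}_{p}\log\Delta$ is the paper's Lemma~\ref{lem:log-p}, and $\dim_{\mathbb{Q}_{p}}L_{p}=\dim_{\mathbb{Q}}L$ is the paper's Corollary~\ref{cor:dim}); since your derivations are direct (continuity of $\log$ plus $\mathbb{Q}$ dense in $\mathbb{Q}_{p}$, and the closure of a $\mathbb{Z}$-lattice in its $\mathbb{Q}_{p}$-span being the $\mathbb{Z}_{p}$-lattice) and do not secretly appeal to Lemma~\ref{lem:R_p'} itself, there is no circularity, but it is worth flagging that you have, in effect, front-loaded those two facts. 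Second, the sentence claiming ``the closure of $\log\Gamma$ \dots\ \emph{is} the $\mathbb{Q}_{p}$-subspace $\mathbb{Q}_{p}\otimes L$'' overstates what you mean: the closure of $\log\Gamma$ is the compact set $\log\Gamma_{p}$, and the point is merely that this closure is \emph{contained in} the closed subspace $\mathbb{Q}_{p}\otimes L$, from which $L_{p}\subseteq\mathbb{Q}_{p}\otimes L$. Likewise, when you speak of ``the closure of $\Delta$ inside $R_{p}$,'' it is cleaner to take closures in the ambient $Tr_{1}(n,\mathbb{Z}_{p})$ (where $\Delta_{p}$ lives) and then observe, via $\log\Delta_{p}=\mathbb{Z}_{p}\log\Delta\subseteq L_{p}$, that this closure in fact lies in $R_{p}$. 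These are wording issues only; the mathematics is sound.
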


Also, similarly to Lemma \ref{lem:prime}, one can use the BCH formula
in order to show:
\begin{lem}
\label{lem:R-p-L-p}Under the correspondence between $R_{p}$ and
$L_{p}$ one has $R_{p}'=L_{p}'$. This equality gives a natural group
homomorphism between $R_{p}/R_{p}'$ and the additive group $L_{p}/L_{p}'$. 
\end{lem}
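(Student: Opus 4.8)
The plan is to run the argument behind Lemma \ref{lem:prime} verbatim in the $p$-adic setting, the only new point being to justify that nothing in the Baker--Campbell--Hausdorff (BCH) bookkeeping changes when $\mathbb{Q}$ is replaced by $\mathbb{Q}_p$. By Lemma \ref{lem:Q-p algebra}, $L_p$ is a $\mathbb{Q}_p$-subalgebra of $Tr_0(n,\mathbb{Q}_p)$, hence finite-dimensional and nilpotent (of class $<n$); consequently the BCH series for $\exp X\cdot\exp Y$ and for the group commutator $[\exp X,\exp Y]$ are \emph{finite} sums of iterated brackets with rational coefficients, which of course make sense over $\mathbb{Q}_p$. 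Thus $R_p=\exp(L_p)$ is a unipotent subgroup of $Tr_1(n,\mathbb{Q}_p)$ (cf. Lemma \ref{lem:R_p'}) whose multiplication is governed by these finite BCH expressions, and all the formal manipulations used for $R$ and $L$ in Lemma \ref{lem:prime} apply unchanged.

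First I would prove $R_p'\subseteq\exp(L_p')$. Every term of the BCH expansion of $[\exp X,\exp Y]$ is a bracket monomial of degree $\geq 2$ in $X,Y$, hence lies in the ideal $L_p'=[L_p,L_p]$; therefore $[\exp X,\exp Y]\in\exp(L_p')$. Since $L_p'$ is a subalgebra, BCH shows $\exp(L_p')$ is a subgroup of $R_p$, so it contains the subgroup generated by all group commutators, i.e. $R_p'\subseteq\exp(L_p')$. For the reverse inclusion one shows by induction on the lower central series that $\gamma_i(R_p)=\exp(\gamma_i(L_p))$ for all $i$ --- a spanning element $[U,X]$ of $\gamma_i(L_p)$ (with $U\in\gamma_{i-1}(L_p)$, $X\in L_p$) is the leading BCH term of the group commutator $[\exp U,\exp X]$, all further terms lying in $\gamma_{i+1}(L_p)$, which is absorbed by the induction --- exactly as in the proof of Lemma \ref{lem:prime}; taking $i=2$ gives $R_p'=\exp(L_p')$, i.e. $R_p'=L_p'$ under the set-identification of $R_p$ with $L_p$.

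Finally, the bijection $\log\colon R_p\to L_p$ descends to a bijection $R_p/R_p'\to L_p/L_p'$. Reducing the BCH formula for $\exp X\cdot\exp Y$ modulo $L_p'$ annihilates every term except $X+Y$, so $\overline{\exp X}\cdot\overline{\exp Y}=\overline{\exp(X+Y)}$ in $R_p/R_p'$; hence this bijection is a group isomorphism onto the additive group $L_p/L_p'$, which in particular provides the asserted natural homomorphism. I do not anticipate a real obstacle: the entire substance is contained in Lemma \ref{lem:Q-p algebra} and the nilpotency of $L_p$, which together make the BCH manipulations formally identical to the characteristic-zero field case already treated in Lemma \ref{lem:prime}.
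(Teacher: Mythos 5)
Your proposal is correct and takes essentially the same approach as the paper: the paper's own justification is the single remark that, as in Lemma~\ref{lem:prime}, one uses the BCH formula, and your argument simply spells out that BCH computation over $\mathbb{Q}_p$ (noting, via Lemma~\ref{lem:Q-p algebra}, that $L_p$ is a nilpotent $\mathbb{Q}_p$-Lie algebra so the BCH series terminates). Your more detailed write-up of the two inclusions and the descent to $R_p/R_p'\to L_p/L_p'$ is a faithful expansion of what the paper leaves implicit.
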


One can use the BCH formula in order to prove that $L_{p}'$ is the
$\mathbb{Q}$-span of $\log(\Gamma_{p}')$, and hence $R_{p}'$ is
a Mal'cev completion of $\Gamma_{p}'$ in the sense of Lemma \ref{lem:R_p'}.
Denoting $\delta(\Gamma_{p})=\ker(\Gamma_{p}\to(\Gamma^{*})_{p}\simeq\mathbb{Z}_{p}^{d})$
where $d=\textrm{rank}_{\mathbb{Z}}(\Gamma^{*})$, we have also a
similar property as in Lemma \ref{lem:delta}:
\begin{lem}
\label{lem:delta-p}One has $\Gamma_{p}\cap R_{p}'=\delta(\Gamma_{p})$
and 
\[
\dim_{\mathbb{Q}_{p}}(R_{p}/R_{p}')=\textrm{rank}_{\mathbb{Z}_{p}}(\Gamma_{p}/\delta(\Gamma_{p}))=d.
\]
\end{lem}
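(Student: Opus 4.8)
The plan is to prove Lemma \ref{lem:delta-p} as the faithful $p$-adic transcription of Lemma \ref{lem:delta}. Two preliminary facts drive everything: (a) the scalar-extension identities $L_{p}=L\otimes_{\mathbb{Q}}\mathbb{Q}_{p}$ and $L_{p}'=L'\otimes_{\mathbb{Q}}\mathbb{Q}_{p}$; and (b) the fact that $R_{p}'$ is \emph{isolated} in $R_{p}$, i.e.\ $a\in R_{p}$ with $a^{m}\in R_{p}'$ for some $m\geq1$ forces $a\in R_{p}'$. For (a): from $\Gamma\subseteq\Gamma_{p}$ we get $\log(\Gamma)\subseteq\log(\Gamma_{p})$, hence $L\subseteq L_{p}$, and since $L_{p}$ is a $\mathbb{Q}_{p}$-subspace by Lemma \ref{lem:Q-p algebra} this gives $L\otimes_{\mathbb{Q}}\mathbb{Q}_{p}\subseteq L_{p}$; conversely every $g\in\Gamma_{p}$ is a limit of elements of $\Gamma$, so by continuity of $\log$ the element $\log(g)$ lies in the closure of $\log(\Gamma)$, which is contained in the finite-dimensional (hence closed) $\mathbb{Q}_{p}$-subspace $L\otimes_{\mathbb{Q}}\mathbb{Q}_{p}$ of $Tr_{0}(n,\mathbb{Q}_{p})$; thus $L_{p}\subseteq L\otimes_{\mathbb{Q}}\mathbb{Q}_{p}$, and then $L_{p}'=[L_{p},L_{p}]=L'\otimes_{\mathbb{Q}}\mathbb{Q}_{p}$. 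Fact (b) is then immediate: if $a^{m}\in R_{p}'=\exp(L_{p}')$ then $m\log(a)\in L_{p}'$, and since $L_{p}'$ is a $\mathbb{Q}_{p}$-subspace, $\log(a)\in L_{p}'$, so $a\in R_{p}'$.

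The dimension count is now quick: Lemma \ref{lem:R-p-L-p} identifies $R_{p}/R_{p}'$ with $L_{p}/L_{p}'=(L/L')\otimes_{\mathbb{Q}}\mathbb{Q}_{p}$, so $\dim_{\mathbb{Q}_{p}}(R_{p}/R_{p}')=\dim_{\mathbb{Q}}(L/L')=d$ by Lemma \ref{lem:delta}.

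For the equality $\Gamma_{p}\cap R_{p}'=\delta(\Gamma_{p})$: since $R_{p}'$ is a Mal'cev completion of $\Gamma_{p}'$ (as noted after Lemma \ref{lem:R-p-L-p}), one inclusion of $\Gamma_{p}\cap R_{p}'=\{g\in\Gamma_{p}:g^{m}\in\Gamma_{p}'\text{ for some }m\}$ uses that every element of $R_{p}'$ has a power in $\Gamma_{p}'$, and the reverse inclusion uses the isolation (b). So $\Gamma_{p}\cap R_{p}'$ is the preimage in $\Gamma_{p}$ of the torsion subgroup of $\Gamma_{p}/\Gamma_{p}'$. As $\Gamma/\Gamma'=\bar{\Gamma}$ is finitely generated abelian, $\Gamma_{p}/\Gamma_{p}'\cong\bar{\Gamma}\otimes\mathbb{Z}_{p}$, whose torsion subgroup is the kernel of the projection onto $(\bar{\Gamma}/tor(\bar{\Gamma}))\otimes\mathbb{Z}_{p}=(\Gamma^{*})_{p}$; by the very definition of $\delta(\Gamma_{p})$ this pulls back to $\delta(\Gamma_{p})$, giving $\Gamma_{p}\cap R_{p}'=\delta(\Gamma_{p})$. (Alternatively: $\delta(\Gamma_{p})$ is the closure of $\delta(\Gamma)$ in $\Gamma_{p}$, and by Lemma \ref{lem:delta}, $\delta(\Gamma)\subseteq R'\subseteq R_{p}'$, with $R_{p}'=\exp(L_{p}')$ closed.) Finally, the surjection $\Gamma\twoheadrightarrow\Gamma^{*}$ induces $\Gamma_{p}\to(\Gamma^{*})_{p}$ with dense image; since $\Gamma$ is finitely generated, $\Gamma_{p}$ is compact, so this map is onto, with kernel $\delta(\Gamma_{p})$. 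Hence $\Gamma_{p}/\delta(\Gamma_{p})\cong(\Gamma^{*})_{p}\cong\mathbb{Z}_{p}^{d}$, so $\textrm{rank}_{\mathbb{Z}_{p}}(\Gamma_{p}/\delta(\Gamma_{p}))=d$.

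I do not expect a serious obstacle; the argument mirrors the $\mathbb{Q}$-case line for line. The one place that deserves care is the identification $L_{p}=L\otimes_{\mathbb{Q}}\mathbb{Q}_{p}$ — specifically, that $\log(\Gamma_{p})$ cannot escape the $\mathbb{Q}_{p}$-span of $\log(\Gamma)$, which rests on continuity of $\log$ together with the closedness of finite-dimensional $\mathbb{Q}_{p}$-subspaces. Once that is in hand, the dimension count, the isolation of $R_{p}'$, and the description of $\Gamma_{p}\cap R_{p}'$ all follow mechanically.
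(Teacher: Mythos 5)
Your argument is correct, and it departs from the paper's route in one genuinely useful way. The paper proves $\Gamma_{p}\cap R_{p}'=\delta(\Gamma_{p})$ by first showing directly (via density and the universal property of the pro-$p$ completion) that $\delta(\Gamma_{p})=\ker(\Gamma_{p}\to\bar{\Gamma}_{p}/tor(\bar{\Gamma}_{p}))$, then sandwiching $\Gamma_{p}\cap R_{p}'$ between two inclusions using the torsion-freeness of $R_{p}/R_{p}'$ on one side and the Mal'cev property of $R_{p}'$ over $\Gamma_{p}'$ on the other; the dimension count is done purely inside $R_{p}/R_{p}'$, with the identification of $L_{p}$ with $L\otimes_{\mathbb{Q}}\mathbb{Q}_{p}$ deferred (and obtained by a longer derived-series induction) in Corollary~\ref{cor:dim}. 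You instead establish $L_{p}=L\otimes_{\mathbb{Q}}\mathbb{Q}_{p}$ at the outset, using continuity of $\log$ and the fact that a finite-dimensional $\mathbb{Q}_{p}$-subspace of $Tr_{0}(n,\mathbb{Q}_{p})$ is closed; this gives $L_{p}'=L'\otimes\mathbb{Q}_{p}$, the isolation of $R_{p}'$, and the dimension count all for free, and would in fact short-circuit the paper's Corollary~\ref{cor:dim}. Your identification of $\Gamma_{p}\cap R_{p}'$ with the torsion preimage of $\Gamma_{p}/\Gamma_{p}'$ is essentially the same chain of inclusions as the paper's, just phrased through the concrete isomorphism $\Gamma_{p}/\Gamma_{p}'\cong\bar{\Gamma}\otimes\mathbb{Z}_{p}$ (which, as in the paper, ultimately rests on closedness of $\Gamma_{p}'$ via the cited result from Dixon--Du Sautoy--Mann--Segal) rather than via the abstract surjection $(\Gamma^{*})_{p}\twoheadrightarrow\bar{\Gamma}_{p}/tor(\bar{\Gamma}_{p})$. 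One small caveat: your parenthetical ``alternative'' only establishes $\delta(\Gamma_{p})\subseteq\Gamma_{p}\cap R_{p}'$, not the reverse inclusion, so it is not a self-contained replacement for the main argument; but since the main argument is complete this is cosmetic.
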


\begin{proof}
We first prove that $\delta(\Gamma_{p})=\ker(\Gamma_{p}\to\bar{\Gamma}_{p}/tor(\bar{\Gamma}_{p}))$,
where $\bar{\Gamma}_{p}=\Gamma_{p}/\Gamma_{p}'$. As $(\Gamma^{*})_{p}$
is a torsion free abelian quotient of $\Gamma_{p}$, it follows that
$\delta(\Gamma_{p})\supseteq\ker(\Gamma_{p}\to\bar{\Gamma}_{p}/tor(\bar{\Gamma}_{p}))$.
On the other hand, the map $\Gamma\to\Gamma_{p}$ induces a map $\Gamma^{*}\to\text{\ensuremath{\bar{\Gamma}_{p}}/tor(\ensuremath{\bar{\Gamma}_{p}})}$.
Now, as $\Gamma_{p}$ is a finitely generated pro-$p$ group, $\Gamma_{p}'$
is closed in $\Gamma_{p}$ (see Proposition 1.19 in \cite{key-36}).
Hence, $\Gamma_{p}\to\bar{\Gamma}_{p}$ is a continuous homomorphism,
and hence, $\Gamma_{p}\to\bar{\Gamma}_{p}/tor(\bar{\Gamma}_{p})$
is continuous as well. Thus, we can say that the image of $\Gamma^{*}$
under the map $\Gamma^{*}\to\text{\ensuremath{\bar{\Gamma}_{p}}/tor(\ensuremath{\bar{\Gamma}_{p}})}$,
is dense in $\bar{\Gamma}_{p}/tor(\bar{\Gamma}_{p})$. Hence, we have
a surjective continuous homomorphism $(\Gamma^{*})_{p}\to\bar{\Gamma}_{p}/tor(\bar{\Gamma}_{p})$.
It follows that $\delta(\Gamma_{p})\subseteq\ker(\Gamma_{p}\to\bar{\Gamma}_{p}/tor(\bar{\Gamma}_{p}))$,
so $\delta(\Gamma_{p})=\ker(\Gamma_{p}\to\bar{\Gamma}_{p}/tor(\bar{\Gamma}_{p}))$
as required.

Now, as $R_{p}$ contains $\Gamma_{p}$, we have $\Gamma_{p}/(\Gamma_{p}\cap R_{p}')\leq R_{p}/R_{p}'$.
It follows that $\Gamma_{p}/(\Gamma_{p}\cap R_{p}')$ is a torsion
free abelian quotient of $\Gamma_{p}$. Hence
\[
\delta(\Gamma_{p})=\ker(\Gamma_{p}\to\bar{\Gamma}_{p}/tor(\bar{\Gamma}_{p}))\subseteq\Gamma_{p}\cap R_{p}'.
\]
On the other hand, as every element of $R_{p}'$ has a power in $\Gamma_{p}'$,
it follows that we also have $\Gamma_{p}\cap R_{p}'\subseteq\ker(\Gamma_{p}\to\bar{\Gamma}_{p}/tor(\bar{\Gamma}_{p}))=\delta(\Gamma_{p})$,
as required.

The equality $\Gamma_{p}\cap R_{p}'=\delta(\Gamma_{p})$ implies that
$\Gamma_{p}/\delta(\Gamma_{p})\leq R_{p}/R_{p}'$. Thus $\Gamma_{p}/\delta(\Gamma_{p})$
is a free $\mathbb{Z}_{p}$-module that spanes the vector space $R_{p}/R_{p}'$
over $\mathbb{Q}_{p}$. Hence $\dim_{\mathbb{Q}_{p}}(R_{p}/R_{p}')=\textrm{rank}_{\mathbb{Z}_{p}}(\Gamma_{p}/\delta(\Gamma_{p}))=d$,
as required.
\end{proof}
The following corollary will be needed later:
\begin{cor}
\label{cor:dim}We have $\dim_{\mathbb{Q}}(L)=\dim_{\mathbb{Q}_{p}}(L_{p})$.
\end{cor}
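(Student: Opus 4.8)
The goal is to show $\dim_{\mathbb{Q}}(L)=\dim_{\mathbb{Q}_p}(L_p)$. Both of these are Lie algebras: $L$ is spanned over $\mathbb{Q}$ by $\log(\Gamma)$ inside $Tr_0(n,\mathbb{Q})$, and $L_p$ is spanned over $\mathbb{Q}_p$ by $\log(\Gamma_p)$ inside $Tr_0(n,\mathbb{Q}_p)$. Since $\Gamma \subseteq \Gamma_p$, we have $\log(\Gamma)\subseteq\log(\Gamma_p)$, so the $\mathbb{Q}_p$-span of $\log(\Gamma)$ is contained in $L_p$. The natural guess is that $L_p = L\otimes_{\mathbb{Q}}\mathbb{Q}_p$, i.e. $L_p$ is just the $\mathbb{Q}_p$-span of $L$ (equivalently, of $\log(\Gamma)$). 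If that holds, then a $\mathbb{Q}$-basis of $L$ is a $\mathbb{Q}_p$-spanning set of $L_p$, and we need it to remain linearly independent over $\mathbb{Q}_p$ — but that's automatic since a $\mathbb{Q}$-linearly-independent set of vectors in $\mathbb{Q}^N \subseteq \mathbb{Q}_p^N$ with rational (indeed integer, after scaling) coordinates stays $\mathbb{Q}_p$-linearly independent.

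So the real content is: the $\mathbb{Q}_p$-span of $\log(\Gamma_p)$ equals the $\mathbb{Q}_p$-span of $\log(\Gamma)$. One inclusion is trivial. For the other, take $g \in \Gamma_p$; I need $\log(g)$ to lie in $\mathbb{Q}_p\cdot L$. Since $\Gamma_p$ is the closure of $\Gamma$ in $Tr_1(n,\mathbb{Z}_p)$, write $g = \lim_{i} \gamma_i$ with $\gamma_i \in \Gamma$. By continuity of $\log$ (established in the excerpt), $\log(g) = \lim_i \log(\gamma_i)$, and each $\log(\gamma_i) \in L$. Now $L$ is a finite-dimensional $\mathbb{Q}$-subspace of $Tr_0(n,\mathbb{Q})$; pick a $\mathbb{Q}$-basis $v_1,\dots,v_k$ of $L$ consisting of elements of $Tr_0(n,\mathbb{Z})$ (clear denominators). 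Then $L \otimes_{\mathbb{Q}}\mathbb{Q}_p = \sum \mathbb{Q}_p v_j$ is a closed $\mathbb{Q}_p$-subspace of $Tr_0(n,\mathbb{Q}_p)$ (finite-dimensional subspaces over a complete field are closed). Since each $\log(\gamma_i)$ lies in this closed subspace, so does the limit $\log(g)$. Hence $L_p \subseteq L\otimes_{\mathbb{Q}}\mathbb{Q}_p$, giving equality, and therefore $\dim_{\mathbb{Q}_p}(L_p) = k = \dim_{\mathbb{Q}}(L)$.

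Concretely the plan is: (1) reduce to showing $L_p = L \otimes_{\mathbb{Q}} \mathbb{Q}_p$ as subspaces of $Tr_0(n,\mathbb{Q}_p)$; (2) the inclusion $L\otimes_{\mathbb{Q}}\mathbb{Q}_p \subseteq L_p$ follows since $L_p$ is a $\mathbb{Q}_p$-vector space (Lemma \ref{lem:Q-p algebra}) containing $\log(\Gamma)\supseteq$ a basis of $L$; (3) the reverse inclusion uses that $\Gamma_p = \overline{\Gamma}$ in $Tr_1(n,\mathbb{Z}_p)$ together with continuity of $\log$ and the fact that the finite-dimensional $\mathbb{Q}_p$-subspace $L\otimes\mathbb{Q}_p$ is closed; (4) conclude the dimension equality.

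I don't anticipate a serious obstacle; the only point requiring a little care is invoking closedness of finite-dimensional subspaces over $\mathbb{Q}_p$ and the identification $\Gamma_p = \overline{\Gamma} \le Tr_1(n,\mathbb{Z}_p)$, both of which are available from what precedes (the latter from the congruence subgroup property for arithmetic soluble groups, as already used in the excerpt). An alternative route avoiding topology: one could argue via Lemma \ref{lem:delta-p} and induction on the nilpotency class that $\dim_{\mathbb{Q}_p}(R_p/R_p') = d = \dim_{\mathbb{Q}}(R/R')$ and $R_p' $ corresponds to $L_p'$ which is the Mal'cev completion of $\Gamma_p'$, matching $L' = $ span of $\log(\Gamma')$; but the closure argument above is cleaner and more direct, so that is the one I would present.
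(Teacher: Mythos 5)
Your proof is correct but takes a genuinely different route from the paper's. The paper argues along the derived series: it invokes Lemma \ref{lem:delta-p}, the identification $(\Gamma')_p=(\Gamma_p)'$ (via the CSP for soluble arithmetic groups applied to $\Gamma'$), and the fact that $\Gamma'$ is finitely generated, to conclude $\dim_{\mathbb{Q}}\bigl(L^{(i)}/L^{(i+1)}\bigr)=\dim_{\mathbb{Q}_p}\bigl(L_p^{(i)}/L_p^{(i+1)}\bigr)$ for every $i$, and then sums the graded pieces. You instead prove directly that $L_p$ equals the $\mathbb{Q}_p$-span of $L$ inside $Tr_0(n,\mathbb{Q}_p)$: the inclusion $L\otimes_{\mathbb{Q}}\mathbb{Q}_p\subseteq L_p$ comes from Lemma \ref{lem:Q-p algebra}, and the reverse inclusion from density of $\Gamma$ in $\Gamma_p$, continuity of $\log$, and closedness of the finite-dimensional $\mathbb{Q}_p$-subspace $L\otimes_{\mathbb{Q}}\mathbb{Q}_p$. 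Both routes rely on the identification of $\Gamma_p$ with the closure of $\Gamma$ in $Tr_1(n,\mathbb{Z}_p)$, which the paper had already established. Your argument is shorter, isolates the single topological fact that does the work, and delivers for free the sharper statement that a $\mathbb{Z}$-basis of $\log(\Delta)$ is automatically a $\mathbb{Q}_p$-basis of $L_p$ --- something the paper only recovers afterward, combining Corollary \ref{cor:dim} with the remark that $l_1,\ldots,l_d$ generate $L_p$ over $\mathbb{Q}_p$. The paper's route, in exchange, reuses the apparatus already built (Lemmas \ref{lem:R-p-L-p}, \ref{lem:delta-p}) and exhibits the level-by-level compatibility of the derived series of $L$ and $L_p$, which matches the surrounding development; the paper's own following remark also sketches yet a third route via the Hirsch length.
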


\begin{proof}
We saw that 
\begin{align*}
\dim_{\mathbb{Q}_{p}}(L_{p}/L_{p}') & =\dim_{\mathbb{Q}_{p}}(R_{p}/R_{p}')=\textrm{rank}_{\mathbb{Z}_{p}}(\Gamma_{p}/\delta(\Gamma_{p}))=d\\
 & =\textrm{rank}_{\mathbb{Z}}(\Gamma/\delta(\Gamma))=\dim_{\mathbb{Q}}(R/R')=\dim_{\mathbb{Q}}(L/L').
\end{align*}
By the fact that the commutator subgroup of a finitely generated niplotent
group is finitely generated, and $L'$ is the $\mathbb{Q}$-span of
$\log(\Gamma')$ one has
\[
\textrm{rank}_{\mathbb{Z}_{p}}((\Gamma')_{p}/\delta((\Gamma')_{p}))=\textrm{rank}_{\mathbb{Z}}(\Gamma'/\delta(\Gamma'))=\dim_{\mathbb{Q}}(L'/L'').
\]
Using the CSP for arithmetic soluble groups, we can identify $(\Gamma')_{p}$
with the closure of $\Gamma'$ in $\Gamma_{p}\leq Tr_{1}(n,\mathbb{Z}_{p})$.
As explained in the proof of Lemma \ref{lem:delta-p}, the latter
can be identified with $(\Gamma_{p})'$, and hence $(\Gamma')_{p}=(\Gamma_{p})'$.
Hence, $L_{p}'$, which is the $\mathbb{Q}$-span of $\log((\Gamma_{p})')$,
is actually the $\mathbb{Q}$-span of $\log((\Gamma')_{p})$. It follows
that
\[
\dim_{\mathbb{Q}_{p}}(L_{p}'/L_{p}'')=\textrm{rank}_{\mathbb{Z}_{p}}((\Gamma')_{p}/\delta((\Gamma')_{p}))=\dim_{\mathbb{Q}}(L'/L'').
\]
Continuing like that, we obtain that $\dim_{\mathbb{Q}}(L^{(i)}/L^{(i+1)})=\dim_{\mathbb{Q}_{p}}(L_{p}^{(i)}/L_{p}^{(i+1)})$
for any $i$, where $L^{(i)},L_{p}^{(i)}$ are the $i$-th derivatives
of $L,L_{p}$ respectively. Therefore
\[
\dim_{\mathbb{Q}}(L)=\sum_{i}\dim_{\mathbb{Q}}(L^{(i)}/L^{(i+1)})=\sum_{i}\dim_{\mathbb{Q}_{p}}(L_{p}^{(i)}/L_{p}^{(i+1)})=\dim_{\mathbb{Q}_{p}}(L_{p})
\]
as required. 
\end{proof}
\begin{rem}
We presented a proof for Corollary \ref{cor:dim}, based on the previous
line of discussion, but the knowledgeable reader can also deduce it
by recalling that $\dim_{\mathbb{Q}}(L)$ is equal to $h(\Gamma)$
- the Hirsch length of $\Gamma$, and $h(\Gamma)$ is equal to $\dim(\Gamma_{p})$
- the dimension of the pro-$p$ completion of $\Gamma$, and the later
is equal to $\dim_{\mathbb{Q}_{p}}(L_{p})$ as $L_{p}$ is the Lie
algebra of $\Gamma_{p}$.
\end{rem}

Recall $\Delta$, the lattice hull of $\Gamma$. For our convenience,
without loss of generality, we can assume that $\Gamma\leq\Delta\leq Tr_{1}(n,\mathbb{Z})$
(see Lemma 2 in Chapter 6 of \cite{key-32-1}). Hence, $\Delta_{p}$
can be identified with the closure of $\Delta$ in $Tr_{1}(n,\mathbb{Z}_{p})$. 
\begin{lem}
\label{lem:log-p}One has $\log(\Delta_{p})=\mathbb{Z}_{p}\log(\Delta)\subseteq L_{p}$,
where $\mathbb{Z}_{p}\log(\Delta)$ is the $\mathbb{Z}_{p}$-lattice
of $L_{p}$ sppaned by $\log(\Delta)$. In particular, $\log(\Delta_{p})$
is a $\mathbb{Z}_{p}$-lattice in $L_{p}$ and $\Gamma_{p}\leq\Delta_{p}\leq R_{p}$.
\end{lem}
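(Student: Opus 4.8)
The plan is to run the same geometric argument used for $\Gamma \leq \Delta$ in the $\mathbb{Q}$-setting, now over $\mathbb{Z}_p$. Recall that $\Delta$ was characterized by the property that $\log(\Delta)$ is a $\mathbb{Z}$-lattice in $L$; we want the $p$-adic analogue, that $\log(\Delta_p)$ is exactly the $\mathbb{Z}_p$-span of $\log(\Delta)$ inside $L_p$. First I would note that $\Delta_p$ is the closure of $\Delta$ in $Tr_1(n,\mathbb{Z}_p)$, which by the CSP for arithmetic soluble groups is legitimate (this was already used for $\Gamma_p$), and that $\log$ extends to a homeomorphism $Tr_1(n,\mathbb{Q}_p) \to Tr_0(n,\mathbb{Q}_p)$ carrying $\Delta_p$ onto the closure of $\log(\Delta)$ in $Tr_0(n,\mathbb{Q}_p)$. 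So it suffices to identify that closure with $\mathbb{Z}_p \log(\Delta)$.

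The key step is a coordinate computation: choose a Mal'cev basis $e_1,\dots,e_k$ of $\Delta$, i.e. a $\mathbb{Z}$-basis $\log(u_1),\dots,\log(u_k)$ of the $\mathbb{Z}$-module $\log(\Delta)$ adapted to the lower central series, so that every element $\delta \in \Delta$ is written uniquely as $\delta = u_1^{a_1}\cdots u_k^{a_k}$ with $a_i \in \mathbb{Z}$, and the coordinates $(a_1,\dots,a_k)$ of a product are \emph{polynomial} functions (with $\mathbb{Q}$-coefficients, in fact $\mathbb{Z}$-coefficients after clearing denominators — but here we only need continuity) of the coordinates of the factors; likewise $\log(u_1^{a_1}\cdots u_k^{a_k})$ has BCH-polynomial coordinates in the $a_i$ with respect to $e_1,\dots,e_k$. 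Since polynomials are continuous, passing to the closure in $Tr_1(n,\mathbb{Z}_p)$ simply replaces $a_i \in \mathbb{Z}$ by $a_i \in \mathbb{Z}_p$: the closure $\Delta_p$ consists exactly of the elements with coordinates in $\mathbb{Z}_p$, and applying $\log$ (also given by convergent BCH polynomials in the $a_i$) we get $\log(\Delta_p) = \{\, P(a_1,\dots,a_k) : a_i \in \mathbb{Z}_p \,\}$ where $P$ has coordinates that are $\mathbb{Z}_p$-linear combinations (after inverting bounded denominators, which are units in $\mathbb{Z}_p$ for $p$ large, but one must be a little careful for small $p$) — in any case the image is the $\mathbb{Z}_p$-module generated by $\log(u_1),\dots,\log(u_k)$, i.e. $\mathbb{Z}_p\log(\Delta)$. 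Equivalently, and perhaps cleaner: $\log(\Delta)$ is already a subgroup of the additive group $Tr_0(n,\mathbb{Q})$ up to the BCH corrections, and because $\log(\Delta)$ is a finitely generated $\mathbb{Z}$-module of full rank in $L$ whose closure is compact and open in its $\mathbb{Q}_p$-span, that closure is forced to be the $\mathbb{Z}_p$-span.

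For the last assertions: $\mathbb{Z}_p\log(\Delta)$ spans $L_p$ over $\mathbb{Q}_p$ because $\log(\Delta)$ spans $L$ over $\mathbb{Q}$ (Proposition \ref{prop:lattice}) and $\dim_{\mathbb{Q}_p}(L_p) = \dim_{\mathbb{Q}}(L)$ by Corollary \ref{cor:dim}, so $\log(\Delta)$ — being $k = \dim_{\mathbb{Q}}(L)$ linearly independent vectors over $\mathbb{Q}$, hence over $\mathbb{Q}_p$ — is a $\mathbb{Q}_p$-basis of $L_p$; therefore $\mathbb{Z}_p\log(\Delta)$ is a $\mathbb{Z}_p$-lattice in $L_p$. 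Finally $\Gamma_p \leq \Delta_p$ since $\Gamma \leq \Delta$ and closures respect inclusion, and $\Delta_p \leq R_p = \exp(L_p)$ since $\log(\Delta_p) \subseteq L_p$. I expect the main obstacle to be the bookkeeping around denominators in the BCH / Mal'cev coordinate polynomials: one should either invoke that for all but finitely many $p$ the polynomials are $\mathbb{Z}_p$-integral and handle the remaining finitely many primes by the abstract compactness/open-subgroup argument, or phrase the whole thing via the statement that $\log(\Delta_p)$ is the unique maximal compact $\mathbb{Z}_p$-submodule containing $\log(\Delta)$ on which the group operation closes — but in all formulations the conclusion $\log(\Delta_p) = \mathbb{Z}_p\log(\Delta)$ comes out the same.
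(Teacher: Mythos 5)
Your proposal is correct, but it takes a genuinely different route from the paper's proof, and you actually offer two routes. You correctly reduce the problem to identifying the closure $\overline{\log(\Delta)}$ of $\log(\Delta)$ in $L_p$ with $\mathbb{Z}_p\log(\Delta)$, after noting that $\log$ is a homeomorphism carrying $\Delta_p=\overline{\Delta}$ onto $\overline{\log(\Delta)}$. Your primary route, via Mal'cev coordinates and BCH polynomials, is in principle fine but, as you flag yourself, runs into bookkeeping over denominators at small primes; in the form you write it, this part is a sketch rather than a proof. Your alternative ``cleaner'' route is the one that actually closes the gap, and it is a nice argument: $\log(\Delta)$ is a free $\mathbb{Z}$-module of rank $k$ whose $\mathbb{Z}$-basis is $\mathbb{Q}_p$-linearly independent and, by Corollary \ref{cor:dim}, a $\mathbb{Q}_p$-basis of $L_p$; hence its closure in $L_p$ is exactly $\mathbb{Z}_p\log(\Delta)$ (the closure of $\mathbb{Z}^k$ in $\mathbb{Q}_p^k$ is $\mathbb{Z}_p^k$). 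The paper instead proves the inclusion $\log(\Delta_p)\supseteq\mathbb{Z}_p\log(\Delta)$ directly inside the group $\Delta_p$: it shows $\log(\Delta_p)$ is stable under $\mathbb{Z}_p$-scalar multiplication by the one-parameter argument of Lemma \ref{lem:Q-p algebra}, and then shows $\log(\Delta_p)$ is closed under addition by a limit/compactness argument that uses the fact that $\log(\Delta)$, being a lattice, is already closed under addition; the reverse inclusion is then the same density-plus-closedness observation you make. Both approaches are valid; yours localizes the work to a one-line linear-algebra fact about closures of lattices (at the cost of invoking Corollary \ref{cor:dim} at this step), while the paper's argument works entirely with limits inside $\Delta_p$ and never needs coordinates or the dimension comparison in the proof of this lemma. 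If you keep your write-up, I would drop the Mal'cev-coordinate discussion (or at least demote it to a remark) and promote the compactness/closure argument to the main proof, since that is the part that is actually complete.
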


\begin{proof}
By a similar argument as in Lemma \ref{lem:Q-p algebra}, for any
element $m\in\mathbb{Z}_{p}$ and $a\in\log(\Delta)$ we have $m\cdot a\in\log(\Delta_{p})$.
Hence, we will get $\log(\Delta_{p})\supseteq\mathbb{Z}_{p}\log(\Delta)$
once we show that for any $g,h\in\Delta_{p}$ there exists $k\in\Delta_{p}$
such that $\log(g)+\log(h)=\log(k)$. By assumption, the group $\Delta$
is dense in $\Delta_{p}$ where the topology on $\Delta_{p}$ coincides
with the topology induced by $Tr_{1}(\mathbb{Q}_{p})$. Let $g_{i},h_{i},k_{i}\in\Delta$
be such that
\[
\underset{i\to\infty}{\lim}g_{i}=g\,\,\,\,\,\,\,\,\underset{i\to\infty}{\lim}h_{i}=h\,\,\,\,\,\,\,\,\log(k_{i})=\log(g_{i})+\log(h_{i}).
\]
We need to show that $\underset{i\to\infty}{\lim}k_{i}=k\in\Delta_{p}$
exists and $\log(g)+\log(h)=\log(k)$. As $\exp$ and $\log$ are
continuous, we have
\begin{align*}
k & =\underset{i\to\infty}{\lim}k_{i}=\underset{i\to\infty}{\lim}\exp(\log(g_{i})+\log(h_{i}))\\
 & =\exp(\log(\underset{i\to\infty}{\lim}g_{i})+\log(\underset{i\to\infty}{\lim}h_{i}))=\exp(\log(g)+\log(h)).
\end{align*}
As $\Delta_{p}$ is compact, we have $k\in\Delta_{p}$ where $\log(g)+\log(h)=\log(k)$.

For the opposite inclusion: $\Delta$ is dense in $\Delta_{p}$, so
$\log(\Delta)$ is dense in $\log(\Delta_{p})$. As $\mathbb{Z}_{p}\log(\Delta)$
is closed in $L_{p}$, it follows that $\log(\Delta_{p})\subseteq\mathbb{Z}_{p}\log(\Delta)$,
as required.
\end{proof}
\begin{lem}
\label{lem:minimality^}The group $\Delta_{p}$ is a unique minimal
subgroup $\Gamma_{p}\leq\Delta_{p}\leq R_{p}$ that its image in $L_{p}$
is a $\mathbb{Z}_{p}$-lattice.
\end{lem}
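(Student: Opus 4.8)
The plan is to establish the lemma as the exact $\mathbb{Z}_{p}$-analogue of Proposition~\ref{prop:lattice}, using Lemma~\ref{lem:log-p} to exhibit a candidate and the Baker--Campbell--Hausdorff (BCH) bookkeeping underlying Proposition~\ref{prop:lattice} to prove minimality.

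First, Lemma~\ref{lem:log-p} already says that $\Delta_{p}$ is a subgroup of the required kind: $\Gamma_{p}\leq\Delta_{p}\leq R_{p}$ and $\log(\Delta_{p})=\mathbb{Z}_{p}\log(\Delta)$ is a $\mathbb{Z}_{p}$-lattice in $L_{p}$. Hence the content is minimality: if $\Gamma_{p}\leq H\leq R_{p}$ and $\log(H)$ is a $\mathbb{Z}_{p}$-lattice in $L_{p}$, then $\Delta_{p}\leq H$; uniqueness of the minimal subgroup is then formal (two minimal ones would contain each other). Since a $\mathbb{Z}_{p}$-lattice is closed in $L_{p}$, the set $\log(H)$ is closed, so $H=\exp(\log(H))$ is a closed subgroup of $R_{p}$. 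As $\Delta\subseteq R\subseteq R_{p}$ and $\Delta_{p}$ is the closure of $\Delta$ in $R_{p}$ (because $\log(\Delta_{p})=\mathbb{Z}_{p}\log(\Delta)=\overline{\log(\Delta)}$ by Lemma~\ref{lem:log-p}), it suffices to prove $\Delta\leq H$.

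So fix such an $H$ and set $N:=\log(H)\subseteq L_{p}$. Then $N$ is a $\mathbb{Z}_{p}$-submodule of $L_{p}$; it is closed under the BCH product $u*v:=\log(\exp u\cdot\exp v)$ because $H$ is a subgroup; and it contains $\log(\Gamma_{p})=\overline{\log(\Gamma)}$, hence contains $\log(\Gamma)$. Now run the construction underlying Proposition~\ref{prop:lattice}: let $M_{0}$ be the additive subgroup of $(L,+)$ generated by $\log(\Gamma)$, inductively let $M_{i+1}$ be the additive subgroup generated by $M_{i}\cup(M_{i}*M_{i})$, and let $M_{\infty}=\bigcup_{i}M_{i}$. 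Then $M_{\infty}$ is an additive subgroup of $L$ containing $\log(\Gamma)$ and closed under $*$; an easy induction (using that $\log(\Delta)$ is a $*$-closed $\mathbb{Z}$-submodule of $L$, by Proposition~\ref{prop:lattice}) gives $M_{i}\subseteq\log(\Delta)$ for all $i$, so $M_{\infty}$ is finitely generated, and being full rank it is a lattice; by the minimality of $\log(\Delta)$ among $*$-closed lattices containing $\log(\Gamma)$ (this is Proposition~\ref{prop:lattice}) we conclude $M_{\infty}=\log(\Delta)$. On the other hand every step of this construction stays inside $N$: $\log(\Gamma)\subseteq N$ and $N$ is a $\mathbb{Z}$-module, so $M_{0}\subseteq N$, and $N$ is $*$-closed, so $M_{i}*M_{i}\subseteq N$ and hence $M_{i+1}\subseteq N$. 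Therefore $\log(\Delta)=M_{\infty}\subseteq N$, and since $N$ is a $\mathbb{Z}_{p}$-module this yields $\log(\Delta_{p})=\mathbb{Z}_{p}\log(\Delta)\subseteq N=\log(H)$, i.e. $\Delta_{p}\leq H$, as required.

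The point I would flag as the heart of the matter is that the argument must genuinely use that $H$ is a \emph{subgroup}, i.e. that $N$ is closed under $*$, and not merely that $\log(H)$ is a $\mathbb{Z}_{p}$-lattice. One cannot shortcut it through the $\mathbb{Z}_{p}$-span of $\log(\Gamma_{p})$: already over $\mathbb{Z}$, the subgroup of $L$ generated by $\log(\Gamma)$ can be a proper finite-index sublattice of $\log(\Delta)$ — the BCH product introduces denominators (for instance $2$ and $3$ when $\Gamma$ is free nilpotent of class $\geq3$) — and passing to completions does not repair this, so $\mathbb{Z}_{p}\langle\log(\Gamma_{p})\rangle$ can be strictly smaller than $\log(\Delta_{p})$. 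Thus the BCH recursion of Proposition~\ref{prop:lattice} is doing real work; the only routine check is that every operation used in it is compatible with extension of scalars to $\mathbb{Z}_{p}$, which is clear since $*$ is a fixed polynomial map and $N$ is simultaneously a $\mathbb{Z}_{p}$-module and $*$-closed.
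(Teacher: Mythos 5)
Your proof is correct, but it takes a genuinely different and rather more elaborate route than the paper's. The paper argues by contradiction in two lines: if $\Lambda_{p}$ with $\Gamma_{p}\leq\Lambda_{p}\leq R_{p}$ and $\log(\Lambda_{p})$ a $\mathbb{Z}_{p}$-lattice failed to contain $\Delta_{p}$, then $\Gamma\leq\Lambda_{p}\cap\Delta\lvertneqq\Delta$ would be an intermediate subgroup of $R$ with $\log(\Lambda_{p}\cap\Delta)=\log(\Lambda_{p})\cap\log(\Delta)$ a $\mathbb{Z}$-lattice in $L$ containing $\log(\Gamma)$, contradicting the minimality of $\Delta$ from Proposition~\ref{prop:lattice}. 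In short, the paper transports the question from $L_{p}$ back to $L$ by intersecting the candidate with $\Delta$, never needing to know how $\Delta$ is actually built from $\Gamma$. You instead essentially reprove Proposition~\ref{prop:lattice}: you exhibit $\log(\Delta)$ as the BCH-recursive closure $M_{\infty}$ of $\log(\Gamma)$, identify it with $\log(\Delta)$ using minimality, and check that each stage of the recursion lands in $\log(H)$ because $\log(H)$ is simultaneously a $\mathbb{Z}_{p}$-module and closed under the BCH product. Your verification is sound, and the approach makes transparent that the group structure (closure under $*$), not merely the $\mathbb{Z}_{p}$-module structure, is what forces $\log(\Delta)\subseteq\log(H)$; your side remark that $\mathbb{Z}_{p}\langle\log(\Gamma_{p})\rangle$ can be a proper sublattice of $\log(\Delta_{p})$ correctly illustrates this. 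But it re-derives an explicit construction of the lattice hull that the paper treats as a black box from \cite{key-32-1}, so the paper's version is both shorter and more robust: it relies only on the \emph{statement} of Proposition~\ref{prop:lattice}, whereas yours leans on a plausible but unstated description of its proof.
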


\begin{proof}
Suppose that $\Lambda_{p}$ satisfies $\Gamma_{p}\leq\Lambda_{p}\leq R_{p}$,
$\log(\Lambda_{p})$ is a $\mathbb{Z}_{p}$-lattice, and $\log(\Delta_{p})\nsubseteq\log(\Lambda_{p})$.
It follows that $\Gamma\leq\Lambda_{p}\cap\Delta\lvertneqq\Delta$
where $\log(\Lambda_{p}\cap\Delta)=\log(\Lambda_{p})\cap\log(\Delta)$
is a lattice of $L$ that contains $\log(\Gamma)$. This is a contradiction
to the minimality of $\Delta$.
\end{proof}
Now, using Lemma \ref{lem:R_p'} and the property $\Gamma_{p}\leq\Delta_{p}\leq R_{p}$
(Lemma \ref{lem:log-p}), and following the same steps for the proof
of \cite{key-32-1} to Theorem \ref{thm:aut-corr}, we have:
\begin{thm}
\label{thm:identification^}There are natural isomorphisms
\begin{align*}
Aut(R_{p}) & \cong Aut(L_{p})\\
Aut(\Gamma_{p}) & \cong N_{Aut(L_{p})}(\log(\Gamma_{p}))\leq Aut(L_{p})\\
Aut(\Delta_{p}) & \cong N_{Aut(L_{p})}(\log(\Delta_{p}))\leq Aut(L_{p}).
\end{align*}
\end{thm}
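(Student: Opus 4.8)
The plan is to run, in the $p$-adic category, the same Mal'cev-theoretic argument that yields Theorem~\ref{thm:aut-corr}, with $\mathbb{Q}$ replaced by $\mathbb{Q}_{p}$, the $\mathbb{Q}$-Lie algebra $L$ by the $\mathbb{Q}_{p}$-Lie algebra $L_{p}$ (Lemma~\ref{lem:Q-p algebra}), its exponential group $R=\exp(L)$ by $R_{p}=\exp(L_{p})$, and the chain $\Gamma\le\Delta\le R$ by $\Gamma_{p}\le\Delta_{p}\le R_{p}$ supplied by Lemmas~\ref{lem:R_p'} and~\ref{lem:log-p}, where moreover $\log(\Delta_{p})=\mathbb{Z}_{p}\log(\Delta)$ is a $\mathbb{Z}_{p}$-lattice. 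Every ingredient of the classical proof is available $p$-adically: BCH interpolating polynomially between the group law of $R_{p}$ and the Lie operations of $L_{p}$, unique radicability of $R_{p}$ (it is torsion free, sitting inside $Tr_{1}(n,\mathbb{Q}_{p})$), and "every element of $R_{p}$ has a positive power in $\Gamma_{p}$, hence in $\Delta_{p}$". So the three isomorphisms follow from the three classical mechanisms, which I recall.

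\emph{The isomorphism $Aut(R_{p})\cong Aut(L_{p})$.} Send $\phi$ to $\Phi(\phi):=\log\circ\,\phi\,\circ\exp\colon L_{p}\to L_{p}$. Since the group law of $R_{p}$ and the bracket of $L_{p}$ determine each other polynomially via BCH and $R_{p}$ is divisible, $\Phi(\phi)$ is additive and bracket-preserving; unique radicability forces it to commute with extraction of integer roots, hence with $\mathbb{Q}$-scaling, and the limit argument from the proof of Lemma~\ref{lem:Q-p algebra}, applied to the continuous homomorphisms $\rho_{g}\colon\mathbb{Z}_{p}\to R_{p}$ with $\log(\rho_{g}(\lambda))=\lambda\log(g)$, upgrades this to $\mathbb{Z}_{p}$- and then $\mathbb{Q}_{p}$-linearity. (Here $Aut(R_{p})$ should be read as the group of continuous automorphisms; this is harmless since all automorphisms occurring below --- those of the profinite groups $\Gamma_{p},\Delta_{p}$ and their extensions --- are continuous, whereas an abstract automorphism of $R_{p}$ need only be $\mathbb{Q}$-linear on $L_{p}$.) Then $\Phi$ is a group homomorphism, injective because $\exp,\log$ are mutually inverse bijections, and surjective with inverse $\psi\mapsto\exp\circ\,\psi\,\circ\log$.

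\emph{The identifications of $Aut(\Gamma_{p})$ and $Aut(\Delta_{p})$.} Since $R_{p}$ is a Mal'cev completion of each of $\Gamma_{p}$ and $\Delta_{p}$, every $\phi\in Aut(\Gamma_{p})$ extends to $R_{p}$ by $\widetilde{\phi}(a):=$ the unique $m$-th root in $R_{p}$ of $\phi(a^{m})$, where $m\in\mathbb{N}$ is any integer with $a^{m}\in\Gamma_{p}$; independence of $m$, the homomorphism property, and bijectivity of $\widetilde{\phi}$ follow from uniqueness of roots exactly as classically, and the extension is unique because two extensions agreeing on $\Gamma_{p}$ agree on all roots of $\Gamma_{p}$-elements. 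Thus $Aut(\Gamma_{p})$ is identified with the setwise stabilizer $\{\phi\in Aut(R_{p}):\phi(\Gamma_{p})=\Gamma_{p}\}$: an automorphism of $R_{p}$ preserving $\Gamma_{p}$ restricts to an automorphism of $\Gamma_{p}$, which is continuous since $\Gamma_{p}$ is a finitely generated pro-$p$ group (Nikolov--Segal; cf. the footnote in~\S1), so its unique extension is continuous and coincides with the given one. Transporting through $\Phi$, the condition $\phi(\Gamma_{p})=\Gamma_{p}$ reads $\Phi(\phi)(\log\Gamma_{p})=\log\Gamma_{p}$, i.e. $\Phi(\phi)\in N_{Aut(L_{p})}(\log\Gamma_{p})$. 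Repeating the argument verbatim with $\Delta_{p}$ in place of $\Gamma_{p}$ --- using $\Gamma_{p}\le\Delta_{p}$ for the common powers, the lattice $\log(\Delta_{p})$, and that $\Delta_{p}$ is again a finitely generated pro-$p$ group --- gives $Aut(\Delta_{p})\cong N_{Aut(L_{p})}(\log\Delta_{p})$.

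I expect the only point requiring real care to be the one place where the $p$-adic picture is not a literal copy of the rational one: showing that the transported automorphism is $\mathbb{Q}_{p}$-linear and not merely $\mathbb{Q}$-linear. For the automorphisms actually used this is clean via the $\rho_{g}$-argument above --- continuity of $\phi\in Aut(\Gamma_{p})$ (Nikolov--Segal) gives $\phi\circ\rho_{g}=\rho_{\phi(g)}$, hence $\Phi(\widetilde{\phi})$ is $\mathbb{Z}_{p}$-homogeneous on the $\mathbb{Q}_{p}$-spanning set $\log(\Gamma_{p})$, which together with additivity forces $\mathbb{Q}_{p}$-linearity; the lattice $\log(\Delta_{p})$ makes the $\Delta_{p}$-case even more direct. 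The remaining verifications --- well-definedness and the homomorphism property of $\widetilde{\phi}$, and compatibility of $\Phi$ with composition --- are routine once uniqueness of roots in $R_{p}$ is in hand.
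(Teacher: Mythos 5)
Your proof is correct and follows exactly the approach the paper intends: the paper merely cites Segal's Mal'cev argument (which gave Theorem~\ref{thm:aut-corr}) and asserts it goes through $p$-adically, and you carry out that transfer in detail. Your observation that continuity --- available via Nikolov--Segal for the automorphisms of the pro-$p$ groups $\Gamma_{p},\Delta_{p}$ that actually occur --- is what upgrades the $\mathbb{Q}$-linearity of the transported map on $L_{p}$ to $\mathbb{Q}_{p}$-linearity usefully makes explicit a point the paper leaves tacit.
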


By Lemmas \ref{lem:R-p-L-p}, \ref{lem:delta-p}, and Theorem \ref{thm:identification^},
we obtain:
\begin{prop}
\label{prop:IA-identification^}Denote $IA^{*}(\Gamma_{p})=\ker(Aut(\Gamma_{p})\to Aut((\Gamma^{*})_{p})$.
Then
\begin{align*}
IA^{*}(\Gamma_{p}) & \cong N_{Aut(L_{p})}(\log(\Gamma_{p}))\cap\ker(Aut(L_{p})\to Aut(L_{p}/L'_{p})).
\end{align*}
Similarly, as $\Gamma_{p}\leq\Delta_{p}\leq R_{p}$ (Lemma \ref{lem:log-p}),
we also have
\begin{align*}
IA^{*}(\Delta_{p}) & \cong N_{Aut(L_{p})}(\log(\Delta_{p}))\cap\ker(Aut(L_{p})\to Aut(L_{p}/L'_{p})).
\end{align*}
\end{prop}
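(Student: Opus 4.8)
The plan is to transport the kernel $IA^{*}(\Gamma_{p})=\ker(Aut(\Gamma_{p})\to Aut((\Gamma^{*})_{p}))$ through the correspondence of Theorem~\ref{thm:identification^}, exactly as $IA^{*}(\Gamma)$ was transported through Theorem~\ref{thm:aut-corr} in Proposition~\ref{prop:IA*}. First I would record that the homomorphism $Aut(\Gamma_{p})\to Aut((\Gamma^{*})_{p})$ is well defined because $(\Gamma^{*})_{p}=\bar{\Gamma}_{p}/tor(\bar{\Gamma}_{p})$ is a characteristic quotient of $\Gamma_{p}$ (this is established in the proof of Lemma~\ref{lem:delta-p}), so $\delta(\Gamma_{p})=\ker(\Gamma_{p}\to(\Gamma^{*})_{p})$ is characteristic and $(\Gamma^{*})_{p}=\Gamma_{p}/\delta(\Gamma_{p})$. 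By Lemma~\ref{lem:delta-p} we have $\delta(\Gamma_{p})=\Gamma_{p}\cap R_{p}'$, so the inclusion $\Gamma_{p}\hookrightarrow R_{p}$ identifies $(\Gamma^{*})_{p}$ with a $\mathbb{Z}_{p}$-lattice inside $R_{p}/R_{p}'$ which spans it over $\mathbb{Q}_{p}$ (its rank equals $\dim_{\mathbb{Q}_{p}}(R_{p}/R_{p}')=d$).

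Next I would take $\phi\in Aut(\Gamma_{p})$ and extend it, by Theorem~\ref{thm:identification^}, to the unique $\tilde{\phi}\in Aut(R_{p})\cong Aut(L_{p})$ normalizing $\log(\Gamma_{p})$; conversely every element of $N_{Aut(L_{p})}(\log(\Gamma_{p}))$ arises this way. Since $R_{p}'=L_{p}'$ (Lemma~\ref{lem:R-p-L-p}) is characteristic, $\tilde{\phi}$ descends to an automorphism of $R_{p}/R_{p}'$ which, under the isomorphism $R_{p}/R_{p}'\cong L_{p}/L_{p}'$ of Lemma~\ref{lem:R-p-L-p}, coincides with the image of $\tilde{\phi}$ in $Aut(L_{p}/L_{p}')$. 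As $\tilde{\phi}|_{\Gamma_{p}}=\phi$, this descended map restricts on the sublattice $(\Gamma^{*})_{p}=\Gamma_{p}/\delta(\Gamma_{p})$ to the automorphism that $\phi$ induces on $(\Gamma^{*})_{p}$. Therefore $\phi\in IA^{*}(\Gamma_{p})$ iff this descended map fixes $(\Gamma^{*})_{p}$ pointwise, iff it is the identity on all of $R_{p}/R_{p}'=L_{p}/L_{p}'$ (a $\mathbb{Q}_{p}$-linear automorphism fixing a spanning $\mathbb{Z}_{p}$-lattice pointwise is the identity), iff the image of $\tilde{\phi}$ in $Aut(L_{p}/L_{p}')$ is trivial, i.e. $\tilde{\phi}\in\ker(Aut(L_{p})\to Aut(L_{p}/L_{p}'))$. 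Intersecting with $N_{Aut(L_{p})}(\log(\Gamma_{p}))$ gives the first displayed isomorphism.

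For the second isomorphism I would use Lemma~\ref{lem:log-p}, which gives $\Gamma_{p}\leq\Delta_{p}\leq R_{p}$, so $R_{p}$ is also a Mal'cev completion of $\Delta_{p}$ and Theorem~\ref{thm:identification^} applies verbatim with $\Delta_{p}$ in place of $\Gamma_{p}$; the analogue $\delta(\Delta_{p})=\Delta_{p}\cap R_{p}'$ is obtained by the same reasoning as in Lemma~\ref{lem:delta-p}, using that $\Gamma$ and $\Delta$ share the radicable hull $R$ (hence the same $p$-adic Mal'cev completion $R_{p}$). The single delicate point in the argument is the equivalence ``trivial on the lattice $(\Gamma^{*})_{p}$'' $\Longleftrightarrow$ ``trivial on $R_{p}/R_{p}'$'', which is exactly where Lemma~\ref{lem:delta-p} is needed (that $(\Gamma^{*})_{p}$ spans $R_{p}/R_{p}'$ over $\mathbb{Q}_{p}$ with the correct rank $d$); everything else is bookkeeping with characteristic subgroups and the continuity of $\log$ already used above.
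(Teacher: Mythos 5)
Your proof is correct and follows essentially the route the paper intends: the paper gives no written proof of Proposition~\ref{prop:IA-identification^}, merely citing Lemmas~\ref{lem:R-p-L-p}, \ref{lem:delta-p} and Theorem~\ref{thm:identification^}, and your argument is precisely the detailed deduction from these three results (mirroring how Proposition~\ref{prop:IA*} follows from Theorem~\ref{thm:aut-corr} and Lemma~\ref{lem:delta}). In particular you correctly isolate the one nontrivial step — that triviality on the spanning $\mathbb{Z}_{p}$-lattice $(\Gamma^{*})_{p}\subseteq R_{p}/R_{p}'$ forces triviality on all of $L_{p}/L_{p}'$ — which is exactly where Lemma~\ref{lem:delta-p} is used.
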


We can now deduce Proposition \ref{prop:profinite completion}:
\begin{proof}
(of Proposition \ref{prop:profinite completion}) From Lemma \ref{lem:minimality^}
and Proposition \ref{prop:IA-identification^} we get that for any
prime $p$ one has $IA^{*}(\Gamma_{p})\hookrightarrow IA^{*}(\Delta_{p})$.
Therefore
\begin{align*}
IA^{*}(\hat{\Gamma}) & =\prod_{p}IA^{*}(\Gamma_{p})\hookrightarrow\prod_{p}IA^{*}(\Delta_{p})=IA^{*}(\hat{\Delta}).
\end{align*}
Moreover, $IA^{*}(\hat{\Gamma})=\{\alpha\in IA^{*}(\hat{\Delta})\,|\,\alpha(\hat{\Gamma})=\hat{\Gamma}\}$,
hence it is open in $IA^{*}(\hat{\Delta})$. 
\end{proof}
As we mentioned previously, by Proposition \ref{prop:moving}, in
order to prove Theorem \ref{thm:The therorem} for $\Gamma$, it is
enough to prove it for $\Delta$. So from now on $\log(\Delta)$ is
a lattice that spans $L$, and hence $k=\dim_{\mathbb{Q}}(L)=\textrm{rank}_{\mathbb{Z}}(\log(\Delta))$.
Our objective now is to construct a basis for $\log(\Delta)$ that
with relation to it, we will be able to view $IA^{*}(\Delta)$ as
the $\mathbb{Z}$-points of a $\mathbb{Q}$-algebraic group whose
$\mathbb{Z}_{p}$-points are $IA^{*}(\Delta_{p})$. Once we show this,
we will see that Theorem \ref{thm:The therorem} (1) follows from
the classical CSP for the arithmetic group $IA^{*}(\Delta)$, and
Theorem \ref{thm:The therorem} (2) is the classical strong-approximation
theorem for this group.

Let $g_{1},...,g_{d}\in\Delta$ be such that $\Delta^{*}=\Delta/\delta(\Delta)\simeq\mathbb{Z}^{d}$
is generated by the images $\bar{g}_{1},...,\bar{g}_{d}$. Then, $\bar{g}_{1},...,\bar{g}_{d}$
also generate $(\Delta^{*})_{p}=\Delta_{p}/\delta(\Delta_{p})\simeq\mathbb{Z}_{p}^{d}$
as a pro-$p$ group. Hence, by Lemmas \ref{lem:delta} and \ref{lem:delta-p},
$l_{1}=\log(g_{1}),...,l_{d}=\log(g_{d})$ generate $R/R'\cong L/L'\cong\mathbb{Q}^{(d)}$
over $\mathbb{Q}$ and generate $R_{p}/R_{p}'\cong L_{p}/L_{p}'\cong\mathbb{Q}_{p}^{(d)}$
over $\mathbb{Q}_{p}$. It follows that $l_{1},...,l_{d}$ are linearly
independent over $\mathbb{Q}_{p}$ (and over $\mathbb{Q}$) and since
$L$ and $L_{p}$ are nilpotent, they generate $L$ as a Lie algebra
over $\mathbb{Q}$ and generate $L_{p}$ as a Lie algebra over $\mathbb{Q}_{p}$. 
\begin{lem}
\label{lem:B}Let $L=\gamma_{1}(L),L'=\gamma_{2}(L),...,0=\gamma_{c}(L)$
be the lower central series of $L$. The set $l_{1},...,l_{d}$ can
be completed to a basis
\[
B=\{l_{1},...,l_{d},l_{d+1},...,l_{k}\}
\]
of the lattice $\log(\Delta)$ such that:
\begin{enumerate}
\item $B$ contains bases for $\gamma_{j}(L)/\gamma_{j+1}(L)$ mod $\gamma_{j+1}(L)$
for every $j$.
\item $l_{j+1}$ lies in the same term as $l_{j}$ in the lower central
series of $L$, or deeper.
\end{enumerate}
\end{lem}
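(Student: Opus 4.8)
The plan is to produce $B$ by refining, inside the lattice $\log(\Delta)$, the filtration induced by the lower central series of $L$. Put $k=\dim_{\mathbb{Q}}L=\mathrm{rank}_{\mathbb{Z}}\log(\Delta)$, and for $1\le j\le c$ set
\[
M_{j}=\gamma_{j}(L)\cap\log(\Delta),
\]
so that $M_{1}=\log(\Delta)\supseteq M_{2}\supseteq\cdots\supseteq M_{c}=0$. Each $\gamma_{j}(L)$ is a $\mathbb{Q}$-subspace of $L$ and $\log(\Delta)$ is a full $\mathbb{Z}$-lattice in $L$ (Proposition \ref{prop:lattice}), so each $M_{j}$ is a pure sublattice of $\log(\Delta)$ (that is, $\log(\Delta)/M_{j}$ is torsion-free) which spans $\gamma_{j}(L)$ over $\mathbb{Q}$. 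Consequently every successive quotient $M_{j}/M_{j+1}$ embeds into $\gamma_{j}(L)/\gamma_{j+1}(L)$, hence is a finitely generated torsion-free, and therefore free, $\mathbb{Z}$-module, and it is in fact a full lattice in the $\mathbb{Q}$-vector space $\gamma_{j}(L)/\gamma_{j+1}(L)$.

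Next I would identify the top quotient with the abelianization data. By Lemma \ref{lem:prime} the set-theoretic correspondence between $R$ and $L$ carries $R'$ onto $L'$; combined with $\Delta\cap R'=\delta(\Delta)$ (Lemma \ref{lem:delta}) this gives $M_{2}=L'\cap\log(\Delta)=\log(\delta(\Delta))$. The group isomorphism $R/R'\cong L/L'$ of Lemma \ref{lem:prime} therefore restricts to a group isomorphism $\Delta/\delta(\Delta)\cong\log(\Delta)/M_{2}=M_{1}/M_{2}$ sending $\bar{g}_{i}$ to the class of $l_{i}=\log(g_{i})$. Since $\bar{g}_{1},\dots,\bar{g}_{d}$ form a $\mathbb{Z}$-basis of $\Delta/\delta(\Delta)\cong\mathbb{Z}^{(d)}$, the classes of $l_{1},\dots,l_{d}$ form a $\mathbb{Z}$-basis of $M_{1}/M_{2}$ (this also recovers $d=\dim_{\mathbb{Q}}L/L'$).

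Now I would assemble the basis. For each $j=2,\dots,c-1$ choose elements of $M_{j}$ whose images form a $\mathbb{Z}$-basis of the free module $M_{j}/M_{j+1}$, and order all the chosen elements by increasing $j$, starting with $l_{1},\dots,l_{d}$ at level $j=1$; this fixes the numbering $l_{d+1},\dots,l_{k}$. Because each $M_{j}/M_{j+1}$ is free, every short exact sequence $0\to M_{j+1}\to M_{j}\to M_{j}/M_{j+1}\to 0$ splits, so a downward induction on $j$ shows that the elements of $B$ lying in $M_{j}$ form a $\mathbb{Z}$-basis of $M_{j}$; in particular $B$ is a $\mathbb{Z}$-basis of $M_{1}=\log(\Delta)$. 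Property~(1) holds because, for every $j$, the elements of $B$ contained in $\gamma_{j}(L)$ reduce modulo $\gamma_{j+1}(L)$ to a basis of the full lattice $M_{j}/M_{j+1}$ in $\gamma_{j}(L)/\gamma_{j+1}(L)$; and property~(2) holds by the chosen ordering, since an element chosen at level $j$ lies in $\gamma_{j}(L)\setminus\gamma_{j+1}(L)$ and is followed either by another level-$j$ element or by a strictly deeper one.

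The argument is essentially bookkeeping, and I do not expect a genuine obstacle. The two points that need a little care are the purity of each $M_{j}$ in $\log(\Delta)$ — this is exactly what forces the filtration quotients to be free and hence legitimizes lifting their bases — and the transfer of the $\mathbb{Z}$-basis $\{\bar{g}_{i}\}$ across the non-additive map $\log$ by way of the group isomorphism of Lemma \ref{lem:prime}, which is what guarantees that $l_{1},\dots,l_{d}$ complete to an honest lattice basis of $\log(\Delta)$ rather than merely to a $\mathbb{Q}$-basis of $L$.
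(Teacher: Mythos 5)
Your proof is correct and follows essentially the same filtration-by-lower-central-series argument as the paper: choose, for each $j\geq 2$, a $\mathbb{Z}$-basis of $(\log(\Delta)\cap\gamma_j(L))/(\log(\Delta)\cap\gamma_{j+1}(L))$ and assemble these with $l_1,\dots,l_d$. You are somewhat more explicit than the paper on the one subtle point, namely why $l_1,\dots,l_d$ give a $\mathbb{Z}$-basis (not merely a $\mathbb{Q}$-basis) of $\log(\Delta)/(\log(\Delta)\cap L')$, which you justify by transporting the $\mathbb{Z}$-basis $\bar g_1,\dots,\bar g_d$ of $\Delta/\delta(\Delta)$ through the group isomorphism of Lemma \ref{lem:prime} using Lemma \ref{lem:delta}; the paper leaves this step implicit.
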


\begin{proof}
Now, $l_{1},...,l_{d}$ lie in $\gamma_{1}(L)-\gamma_{2}(L)$ and
provide a basis for $L/L'=\gamma_{1}(L)/\gamma_{2}(L)$ mod $\gamma_{2}(L)$.
Denote $i_{1}=d$. 

For every $j\geq2$ the group $(\log(\Delta)\cap\gamma_{j}(L))/(\log(\Delta)\cap\gamma_{j+1}(L))$
is a lattice in $\gamma_{j}(L))/\gamma_{j+1}(L)$ and hence
\[
\textrm{rank}_{\mathbb{Z}}((\log(\Delta)\cap\gamma_{j}(L))/(\log(\Delta)\cap\gamma_{j+1}(L)))=\dim_{\mathbb{Q}}(\gamma_{j}(L))/\gamma_{j+1}(L)).
\]
Let $l_{i_{j-1}+1},...,l_{i_{j}}\in\log(\Delta)\cap\gamma_{j}(L)$
be a basis to 
\[
(\log(\Delta)\cap\gamma_{j}(L))/(\log(\Delta)\cap\gamma_{j+1}(L))\,\,\,\,\textrm{mod}\,\,\,\,\log(\Delta)\cap\gamma_{j+1}(L)).
\]
Then $l_{i_{j-1}+1},...,l_{i_{j}}$ gives a basis for $\gamma_{j}(L))/\gamma_{j+1}(L)$
mod $\gamma_{j+1}(L)$ and they all lie in $\gamma_{j}(L)-\gamma_{j+1}(L)$.

This procedure gives us $l_{1},...,l_{i_{c-1}}$ that satisfy the
two conditions in the lemma. Also, by the construction, $l_{1},...,l_{i_{c-1}}$
span the abelain group $\log(\Delta)$ and provide a basis for $L$.
It follows that $i_{c-1}=k$ and that $l_{1},...,l_{k}$ is the desired
basis for $\log(\Delta)$. 
\end{proof}
Let $B$ be a basis for $\log(\Delta)$ as in the above lemma. Cleary,
$B$ is also a basis for $L$ as a vector space over $\mathbb{Q}$.
As $L_{p}$ is generated as a Lie algebra by $l_{1},...,l_{d}$ over
$\mathbb{Q}_{p}$, the set $B$ also spans $L_{p}$ over $\mathbb{Q}_{p}$.
As $\dim_{\mathbb{Q}}(L)=\dim_{\mathbb{Q}_{p}}(L_{p})$ (Proposition
\ref{cor:dim}), it follows that $B$ is also a basis for $L_{p}$
as a vector space over $\mathbb{Q}_{p}$. Hence, using the basis $B$
we can identify
\[
Aut(L)=GL_{k}(\mathbb{Q})\cap Aut(L_{p})\leq GL_{k}(\mathbb{Q}_{p}).
\]
In addition, as $\log(\Delta)$ is a lattice and $\log(\Delta_{p})=\mathbb{Z}_{p}\log(\Delta)$
(Lemma \ref{lem:log-p}), using the basis $B$ we can identify 
\begin{align*}
Aut(\Delta)=N_{Aut(L)}(\log(\Delta)) & =GL_{k}(\mathbb{Z})\cap Aut(L)\\
Aut(\Delta_{p})=N_{Aut(L_{p})}(\log(\Delta_{p})) & =GL_{k}(\mathbb{Z}_{p})\cap Aut(L_{p}).
\end{align*}
Moreover, we can identify $Aut(L)$ and $Aut(L_{p})$ with the groups
\begin{align*}
Aut(L) & =\{A\in GL_{k}(\mathbb{Q})\,|\,[Ax,Ay]-A[x,y]=0\,\,\,\,\forall x,y\in B\}\\
Aut(L_{p}) & =\{A\in GL_{k}(\mathbb{Q}_{p})\,|\,[Ax,Ay]-A[x,y]=0\,\,\,\,\forall x,y\in B\}.
\end{align*}

Now, notice that $\sigma\in\ker(Aut(L)\to Aut(L/L'))$ if and only
if for every $i=1,...,d$ we have $\sigma(l_{i})\in l_{i}+L'$. As
$l_{1},...,l_{d}$ generate $L$ over $\mathbb{Q}$, it follows that
for every $\sigma\in\ker(Aut(L)\to Aut(L/L'))$ and every $i$ (not
only for $i=1,...,d$) we have
\[
\sigma(l_{i})=l_{i}+n_{i}
\]
where $n_{i}$ lies in a strickly deeper term in the lower central
series of $L$, than of the one that $l_{i}$ lies in. Hence, by the
construction in Lemma \ref{lem:B}, with relation to $B$ we have
\[
\ker(Aut(L)\to Aut(L/L'))=Aut(L)\cap Tr_{1}(k,\mathbb{Q})\leq GL_{k}(\mathbb{Q}_{p}).
\]
It follows that $IA^{*}(\Delta)$ can be identified with the arithmetic
group of $\mathbb{Z}$-points of $Aut(L)\cap Tr_{1}(k,\mathbb{Q})$.
Similarly, we have
\[
\ker(Aut(L_{p})\to Aut(L_{p}/L_{p}'))=Aut(L_{p})\cap Tr_{1}(k,\mathbb{Q}_{p})\leq GL_{k}(\mathbb{Q}_{p}).
\]
and hence $IA^{*}(\Delta_{p})$ can be identifiead with the $\mathbb{Z}_{p}$-points
of $Aut(L_{p})\cap Tr_{1}(k,\mathbb{Q}_{p})$. By the description
above, $Aut(L)\cap Tr_{1}(k,\mathbb{Q})$ and $Aut(L_{p})\cap Tr_{1}(k,\mathbb{Q}_{p})$
are unipotent algebraic groups which are defined by the same equations,
over $\mathbb{Q}$ and $\mathbb{Q}_{p}$ respectively. 

Now, $IA^{*}(\Delta)$ is a unipotent subgroup of $GL_{k}(\mathbb{Z})$.
Hence, by the congruence subgroup property for unipotent arithmetic
groups we have
\[
\begin{array}{cccc}
\widehat{IA^{*}(\Delta)}=\prod_{p}(IA^{*}(\Delta))_{p} & \hookrightarrow & \prod_{p}Tr_{1}(k,\mathbb{Z}_{p}) & =Tr_{1}(k,\mathbb{\hat{Z}})\\
 & \searrow & \uparrow\\
 &  & \prod_{p}IA^{*}(\Delta_{p}) & =IA^{*}(\hat{\Delta})
\end{array}
\]
and hence $\widehat{IA^{*}(\Delta)}\hookrightarrow IA^{*}(\hat{\Delta})$
is injective. As $IA^{*}(\Delta)$ is a finitely generated nilpotent
group, for any $G\leq IA^{*}(\Delta)$ we have $\hat{G}\hookrightarrow\widehat{IA^{*}(\Delta)}$,
and hence $\hat{G}\hookrightarrow IA^{*}(\hat{\Delta})$. This gives
the first part of Theorem \ref{thm:The therorem} in the case of finitely
generated torsion free nilpotent groups. 

For the second statement of Theorem \ref{thm:The therorem}, notice
that by the above description for $IA^{*}(\Delta)$ and $IA^{*}(\Delta_{p})$,
we obtain that $IA^{*}(\Delta)$ is dense in $IA^{*}(\Delta_{p})$
by the strong approximation property for arithmetic unipotent groups
(see \cite{key-36-1}, Proposition 7.1, and the corollary afterward).
Therefore, as each of the groups $IA^{*}(\Delta_{p})$ is a pro-$p$
group, we obtain that $IA^{*}(\Delta)$ is dense in 
\[
\prod_{p}IA^{*}(\Delta_{p})=IA^{*}(\hat{\Delta}).
\]
This completes the proof of Theorem \ref{thm:The therorem} for finitely
generated torsion free nilpotent groups.

\section{\label{sec:torsion}The general case}

The aim of this section is to prove Theorem \ref{thm:The therorem}
given its validity for finitely generated torsion free nilpotent group.
Along the section, $\Gamma$ is a finitely generated nilpotent group,
and $\Delta=\Gamma/tor(\Gamma)$. As a finitely generated nilpotent
group, $\Gamma$ is residually finite. Hence, we can think on $\Gamma$
as a subgroup of $\hat{\Gamma}$. One has $tor(\hat{\Gamma})=tor(\Gamma)$
(see Lemma 2.3 in \cite{key-10}, Corollary 7.5 in \cite{key-34}).
In other words, $tor(\Gamma)$ is a normal subgroup of $\hat{\Gamma}$,
and
\[
\hat{\Delta}=\widehat{\Gamma/tor(\Gamma)}=\hat{\Gamma}/tor(\Gamma)=\hat{\Gamma}/tor(\hat{\Gamma})
\]
is torsion free. In addition, we get that $tor(\Gamma)$ is characteristic,
not only as a subgroup of $\Gamma$, but also as a subgroup of $\hat{\Gamma}$.
Therefore, we have a map $IA^{*}(\hat{\Gamma})\to IA^{*}(\hat{\Delta})$
and we obtain the commutative diagram
\[
\begin{array}{ccc}
IA^{*}(\Gamma) & \to & IA^{*}(\Delta)\\
\downarrow &  & \downarrow\\
IA^{*}(\hat{\Gamma}) & \to & IA^{*}(\hat{\Delta})
\end{array}
\]
Denote $\tilde{K}=\ker(IA^{*}(\Gamma)\to IA^{*}(\Delta))$. Let $x_{1},...,x_{n}$
be a generating set for $\Gamma$. Then, every $\alpha\in\tilde{K}$
can be described by
\[
x_{i}\mapsto x_{i}a_{i}
\]
for some $a_{i}\in tor(\Gamma)$. As $tor(\Gamma)$ is finite, $\tilde{K}$
is also finite. 

Now, let $G\leq IA^{*}(\Gamma)$, and denote its image in $IA^{*}(\Delta)$
by $H$. Then $K=\ker(G\to H)\leq\tilde{K}$ is finite, and hence
we obtain the following commutative exact diagram
\[
\begin{array}{ccccccccc}
1 & \to & K & \to & G & \to & H & \to & 1\\
 &  &  & \searrow & \downarrow &  & \downarrow\\
 &  &  &  & IA^{*}(\hat{\Gamma}) & \to & IA^{*}(\hat{\Delta})
\end{array}
\]
Notice that as $G\leq IA^{*}(\Gamma)\hookrightarrow IA^{*}(\hat{\Gamma})$,
the map $K\to IA^{*}(\hat{\Gamma})$ is injective. Notice also that
as $K$ is finite, we have $K=\hat{K}$. Hence, moving to the profinite
completion of the upper row, we get the commutative exact diagram
\[
\begin{array}{ccccccc}
K=\hat{K} & \to & \hat{G} & \to & \hat{H} & \to & 1\\
 & \searrow & \downarrow &  & \downarrow\\
 &  & IA^{*}(\hat{\Gamma}) & \to & IA^{*}(\hat{\Delta})
\end{array}
\]
It follows that $\hat{K}\to IA^{*}(\hat{\Gamma})$ is injective and
by $\mathsection$\ref{sec:torsion free}, also $\hat{H}\to IA^{*}(\hat{\Delta})$
is injective. Hence, by diagram chasing, $\hat{G}\to IA^{*}(\hat{\Gamma})$
is also injective. This proves the first assertion of Theorem \ref{thm:The therorem}
in the general case. We move now to prove the second assertion.

As $\Gamma$ is residually finite, and $tor(\Gamma)$ is finite, there
exists $t\in\mathbb{N}$ such that $tor(\Gamma)\cap\Gamma^{t}=\{e\}$
where $\Gamma^{t}$ is the normal subgroup
\[
\Gamma^{t}=\left\langle g^{t}\,|\,g\in G\right\rangle \vartriangleleft\Gamma.
\]
Fix this $t$. It follows that whenever $t|m$, we have $tor(\Gamma)\cap\Gamma^{m}=\{e\}$,
and therefore $tor(\Gamma)$ is naturally embedded in $\Gamma/\Gamma^{m}$
for such $m$. As $\Gamma$ is nilpotent and finitely generated, $\Gamma/\Gamma^{m}$
and $\Delta/\Delta^{m}$ are finite for any $m$ (see Corollary 3.3
in \cite{key-34}), and hence one has
\begin{eqnarray*}
\hat{\Gamma} & = & \underset{m\in\mathbb{N}}{\underleftarrow{\lim}}\Gamma/\Gamma^{m}=\underset{t|m}{\underleftarrow{\lim}}\Gamma/\Gamma^{m}\\
\hat{\Delta} & = & \underset{m\in\mathbb{N}}{\underleftarrow{\lim}}\Delta/\Delta^{m}=\underset{t|m}{\underleftarrow{\lim}}\Delta/\Delta^{m}.
\end{eqnarray*}

Now, let $\hat{\alpha}\in IA^{*}(\hat{\Gamma})$, and write
\[
\hat{\alpha}=(\alpha_{m})_{t|m}\in IA^{*}(\hat{\Gamma})\leq\underset{t|m}{\underleftarrow{\lim}}Aut(\Gamma/\Gamma^{m})
\]
where $\alpha_{m}\in Aut(\Gamma/\Gamma^{m})$. In order to prove the
second part of Theorem \ref{thm:The therorem}, namely that $IA^{*}(\Gamma)$
is dense in $IA^{*}(\hat{\Gamma})$, it is enough to show that:
\begin{prop}
\label{prop:lifting}Let $m$ such that $t|m$, and let $\hat{\alpha},\alpha_{m}$
be as above. Then, there exists $\alpha\in IA^{*}(\Gamma)$ such that
$\alpha\to\alpha_{m}$ through the map $IA^{*}(\Gamma)\to Aut(\Gamma/\Gamma^{m})$.
\end{prop}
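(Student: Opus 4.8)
The plan is to prove Proposition \ref{prop:lifting} by first pushing $\hat{\alpha}$ down to the torsion-free quotient $\Delta=\Gamma/tor(\Gamma)$, where the already-established torsion-free case applies, and then lifting the resulting automorphism of $\Delta$ back to $\Gamma$. The one arithmetic fact that makes the lifting work is the choice of $t$: for $t\mid m$ we have $tor(\Gamma)\cap\Gamma^{m}=\{e\}$, so $tor(\Gamma)$ embeds as a characteristic subgroup of $\Gamma/\Gamma^{m}$, and $\Gamma/\Gamma^{m}\to\Delta/\Delta^{m}$ has kernel $tor(\Gamma)\Gamma^{m}/\Gamma^{m}\cong tor(\Gamma)$.

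Step 1 (descend to $\Delta$). Since $\Gamma^{*}=\Delta^{*}$ and $tor(\Gamma)$ is characteristic in $\hat{\Gamma}$ with $\hat{\Delta}=\hat{\Gamma}/tor(\Gamma)$, the element $\hat{\alpha}\in IA^{*}(\hat{\Gamma})$ induces $\hat{\beta}\in IA^{*}(\hat{\Delta})$. Because $tor(\Gamma)\Gamma^{m}/\Gamma^{m}$ is characteristic in $\Gamma/\Gamma^{m}$, the automorphism $\alpha_{m}$ induces an automorphism $\bar{\alpha}_{m}$ of $\Delta/\Delta^{m}=\Gamma/tor(\Gamma)\Gamma^{m}$, and this $\bar{\alpha}_{m}$ is exactly the image of $\hat{\beta}$ in $Aut(\Delta/\Delta^{m})$. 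Now apply Theorem \ref{thm:The therorem}(2) to the torsion-free group $\Delta$ (proved in $\mathsection$\ref{sec:torsion free}): $IA^{*}(\Delta)$ is dense in $IA^{*}(\hat{\Delta})$, so there is $\beta\in IA^{*}(\Delta)$ whose image in $Aut(\Delta/\Delta^{m})$ equals $\bar{\alpha}_{m}$.

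Step 2 (lift to an endomorphism of $\Gamma$). Fix generators $x_{1},\dots,x_{n}$ of $\Gamma$ with images $\bar{x}_{i}$ in $\Delta$, and write $\beta(\bar{x}_{i})=\bar{x}_{i}\bar{e}_{i}$ with $\bar{e}_{i}\in\delta(\Delta)=\delta(\Gamma)/tor(\Gamma)$; choose lifts $e_{i}\in\delta(\Gamma)$. The cosets $\alpha_{m}(x_{i}\Gamma^{m})$ and $x_{i}e_{i}\Gamma^{m}$ have the same image in $\Delta/\Delta^{m}$, and the kernel of $\Gamma/\Gamma^{m}\to\Delta/\Delta^{m}$ is $\cong tor(\Gamma)$ (here $tor(\Gamma)\cap\Gamma^{m}=\{e\}$), so there is $s_{i}\in tor(\Gamma)$ with $\alpha_{m}(x_{i}\Gamma^{m})=x_{i}e_{i}s_{i}\Gamma^{m}$. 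Put $d_{i}=e_{i}s_{i}\in\delta(\Gamma)$ and let $\phi\colon F\to\Gamma$ be the homomorphism $x_{i}\mapsto x_{i}d_{i}$ from the free group $F$ on $x_{1},\dots,x_{n}$. On generators the composite $F\xrightarrow{\phi}\Gamma\to\Gamma/\Gamma^{m}$ agrees with $F\to\Gamma\to\Gamma/\Gamma^{m}\xrightarrow{\alpha_{m}}\Gamma/\Gamma^{m}$, and the composite $F\xrightarrow{\phi}\Gamma\to\Delta$ agrees with $F\to\Gamma\to\Delta\xrightarrow{\beta}\Delta$; the latter two composites factor through $\Gamma$. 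Hence $\phi$ carries $\ker(F\to\Gamma)$ into $\Gamma^{m}$ and into $tor(\Gamma)$, thus into $tor(\Gamma)\cap\Gamma^{m}=\{e\}$, so $\phi$ descends to an endomorphism $\alpha$ of $\Gamma$ with $\alpha(x_{i})=x_{i}d_{i}$. The equality $\alpha(x_{i})\Gamma^{m}=\alpha_{m}(x_{i}\Gamma^{m})$ then propagates to $\alpha(g)\Gamma^{m}=\alpha_{m}(g\Gamma^{m})$ for all $g\in\Gamma$, and likewise $\alpha$ induces $\beta$ on $\Delta$. Since $\alpha$ preserves $tor(\Gamma)$ and agrees with $\alpha_{m}$ modulo $\Gamma^{m}$, while $tor(\Gamma)\cap\Gamma^{m}=\{e\}$, the restriction $\alpha|_{tor(\Gamma)}$ is the automorphism of $tor(\Gamma)$ induced by $\alpha_{m}$; in particular it is bijective. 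The five lemma applied to $1\to tor(\Gamma)\to\Gamma\to\Delta\to1$ (with $\beta\in Aut(\Delta)$) now gives $\alpha\in Aut(\Gamma)$. Finally $d_{i}\in\delta(\Gamma)$ makes $\alpha$ trivial on $\Gamma^{*}$, so $\alpha\in IA^{*}(\Gamma)$, and $\alpha\mapsto\alpha_{m}$ under $IA^{*}(\Gamma)\to Aut(\Gamma/\Gamma^{m})$, as required.

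The delicate point is Step 2: in general an automorphism of $\Gamma/tor(\Gamma)$ need not lift to $\Gamma$, and an endomorphism of $\Gamma$ acting trivially on $\Gamma^{*}$ need not be surjective. Both obstructions are defeated by the single observation $tor(\Gamma)\cap\Gamma^{m}=\{e\}$ for $t\mid m$: it is what forces $\phi$ to kill the relator subgroup, and what pins $\alpha|_{tor(\Gamma)}$ down to the automorphism $\alpha_{m}|_{tor(\Gamma)}$ that the five lemma needs. Accordingly, the only real labor in the write-up is the bookkeeping of the two commuting diagrams — over $\Gamma/\Gamma^{m}$ and over $\Delta$ — and the verification that the torsion correction $s_{i}$ disturbs neither of them.
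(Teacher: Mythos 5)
Your proof is correct and rests on the same two pillars as the paper's: push $\hat{\alpha}$ to $\hat{\Delta}$, use the density of $IA^{*}(\Delta)$ in $IA^{*}(\hat{\Delta})$ (the torsion-free case) to produce $\beta\in IA^{*}(\Delta)$ agreeing with $\alpha_m$ over $\Delta/\Delta^m$, and then reassemble $\alpha\in IA^{*}(\Gamma)$ from the pair $(\beta,\alpha_m)$ using $tor(\Gamma)\cap\Gamma^m=\{e\}$. The difference is entirely in how the reassembly is organized. The paper observes that $tor(\Gamma)\cap\Gamma^m=\{e\}$ together with $\ker(\Gamma\to\Delta/\Delta^m)=\Gamma^m\cdot tor(\Gamma)$ identifies $\Gamma$ with the fiber product $\Delta\times_{\Delta/\Delta^m}\Gamma/\Gamma^m$, and then invokes a short lemma (Lemma~\ref{lem:elementary}) that glues the compatible pair $(\beta,\alpha_m)$ directly into an automorphism of the fiber product. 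You build $\alpha$ by hand on generators, check that the relators of $\Gamma$ die (this is the injectivity half of the fiber-product identification), and close with the five lemma on $1\to tor(\Gamma)\to\Gamma\to\Delta\to 1$ to upgrade the endomorphism to an automorphism (which the fiber-product lemma delivers for free). Same content, different packaging: the paper's route is shorter, yours is more concrete. One spot to tighten: the five-lemma step needs $\alpha|_{tor(\Gamma)}$ to be bijective, which you attribute to ``the automorphism of $tor(\Gamma)$ induced by $\alpha_m$''; that $\alpha_m$ does induce an automorphism of $tor(\Gamma)$ relies on $tor(\Gamma)=tor(\hat{\Gamma})$ being characteristic in $\hat{\Gamma}$, a nontrivial fact the paper cites earlier in the section, so you should cite it too. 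Alternatively, bijectivity of $\alpha|_{tor(\Gamma)}$ follows more cheaply without that fact: it is injective because $\alpha(s)=e$ forces $\alpha_m(s\Gamma^m)=\Gamma^m$, hence $s\in\Gamma^m\cap tor(\Gamma)=\{e\}$, and an injective endomorphism of the finite group $tor(\Gamma)$ is automatically bijective.
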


So fix $m$ such that $t|m$, and let $\hat{\beta}\in IA^{*}(\hat{\Delta})$
be the image of $\hat{\alpha}$ under the map $IA^{*}(\hat{\Gamma})\to IA^{*}(\hat{\Delta})$.
Write
\[
\hat{\beta}=(\beta_{m})_{t|m}\in IA^{*}(\hat{\Delta})\leq\underset{t|m}{\underleftarrow{\lim}}Aut(\Delta/\Delta^{m})
\]
where $\beta_{m}\in Aut(\Delta/\Delta^{m})$ is the image of $\alpha_{m}\in Aut(\Gamma/\Gamma^{m})$
under the map $Aut(\Gamma/\Gamma^{m})\to Aut(\Delta/\Delta^{m})$.
By assumption, $IA^{*}(\Delta)$ is dense in $IA^{*}(\hat{\Delta})$,
and hence there exists $\beta\in IA^{*}(\Delta)$ such that $\beta\to\beta_{m}$.
So we have the diagram
\[
\begin{array}{ccc}
 &  & \beta\\
 &  & \downarrow\\
\alpha_{m} & \rightarrow & \beta_{m}
\end{array}\qquad\textrm{under\,\,the\,\,diagram}\qquad\begin{array}{ccc}
 &  & IA^{*}(\Delta)\\
 &  & \downarrow\\
Aut(\Gamma/\Gamma^{m}) & \rightarrow & Aut(\Delta/\Delta^{m})
\end{array}
\]
 We want to use $\alpha_{m}$ and $\beta$ in order to construct $\alpha\in IA^{*}(\Gamma)$
such that $\alpha\to\alpha_{m}$. 

Let us recall the following notion: let $P_{1},P_{2}$ and $Q$ be
groups with epimorphisms $\pi_{i}:P_{i}\twoheadrightarrow Q$, $i=1,2$.
The group
\[
U=P_{1}\times_{Q}P_{2}=\{(x,y)\in P_{1}\times P_{2}\,|\,\pi_{1}(x)=\pi_{2}(y)\}
\]
is called the \uline{fiber product} of $P_{1},P_{2}$ along $Q$
(or the \uline{pullback}). It is easy to check that as $\pi_{i}$
are surjective, we get the following commutative diagran of surjective
maps
\[
\begin{array}{ccc}
U & \rightarrow & P_{1}\\
\downarrow &  & \downarrow\\
P_{2} & \rightarrow & Q
\end{array}
\]
such that $P_{1}\simeq U/(U\cap(\{e\}\times P_{2}))$ and $P_{2}\simeq U/(U\cap(P_{1}\times\{e\}))$.
Regarding our context, as we have the commutative surjective diagram
\[
\begin{array}{ccc}
\Gamma & \rightarrow & \Delta\\
\downarrow &  & \downarrow\\
\Gamma/\Gamma^{m} & \rightarrow & \Delta/\Delta^{m}
\end{array}
\]
we get a natural map $\Gamma\overset{\rho}{\to}\Delta\times_{\Delta/\Delta^{m}}\Gamma/\Gamma^{m}$.
Now, as we assume that $t|m$, we have
\[
\ker(\Gamma\to\Gamma/\Gamma^{m})\cap\ker(\Gamma\to\Delta)=\Gamma^{m}\cap tor(\Gamma)=\{e\}
\]
and hence the map $\rho$ is injective. In addition
\[
\ker(\Gamma\to\Delta/\Delta^{m})=\Gamma^{m}\cdot tor(\Gamma)=\ker(\Gamma\to\Gamma/\Gamma^{m})\cdot\ker(\Gamma\to\Delta)
\]
and hence, one can check that it follows that $\rho$ is also surjective.
Therefore, we can identify $\Gamma$ with $\Delta\times_{\Delta/\Delta^{m}}\Gamma/\Gamma^{m}$,
the fiber product of $\Delta$ and $\Gamma/\Gamma^{m}$ along $\Delta/\Delta^{m}$.
The following lemma is elementary:
\begin{lem}
\label{lem:elementary}Notation as above. Let $\sigma_{i}\in Aut(P_{i})$
for $i=1,2$ be automorphisms preserving $\ker(\pi_{i})$ and inducing
$\bar{\sigma}_{i}\in Aut(Q)$. If $\bar{\sigma}_{1}=\bar{\sigma}_{2}$,
then there exists $\sigma\in Aut(U)$ such that $\sigma$ preseves
$U\cap(P_{1}\times\{e\})$ and $U\cap(\{e\}\times P_{2})$ and induces
$\sigma_{1}$ on $P_{1}$ and $\sigma_{2}$ on $P_{2}$.
\end{lem}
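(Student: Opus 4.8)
The plan is to produce $\sigma$ as the restriction to $U$ of the product automorphism $\sigma_{1}\times\sigma_{2}$ of $P_{1}\times P_{2}$, i.e. $\sigma(x,y)=(\sigma_{1}(x),\sigma_{2}(y))$. The only substantive point --- and the one place the hypothesis $\bar{\sigma}_{1}=\bar{\sigma}_{2}$ is used --- is to check that this restriction genuinely maps $U$ into $U$. First I would record the intertwining relation: since $\sigma_{i}$ preserves $\ker(\pi_{i})$ and induces $\bar{\sigma}_{i}$ on $Q\cong P_{i}/\ker(\pi_{i})$, we have $\pi_{i}\circ\sigma_{i}=\bar{\sigma}_{i}\circ\pi_{i}$ for $i=1,2$. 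Then for $(x,y)\in U$, meaning $\pi_{1}(x)=\pi_{2}(y)$, we get $\pi_{1}(\sigma_{1}(x))=\bar{\sigma}_{1}(\pi_{1}(x))=\bar{\sigma}_{1}(\pi_{2}(y))=\bar{\sigma}_{2}(\pi_{2}(y))=\pi_{2}(\sigma_{2}(y))$, so $(\sigma_{1}(x),\sigma_{2}(y))\in U$. This makes $\sigma$ a well-defined endomorphism of $U$, and it is a homomorphism because $\sigma_{1}$ and $\sigma_{2}$ are.

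Next I would verify that $\sigma$ is invertible. The inverses $\sigma_{1}^{-1},\sigma_{2}^{-1}$ again preserve the respective kernels, induce $\bar{\sigma}_{1}^{-1}=\bar{\sigma}_{2}^{-1}$ on $Q$, and therefore satisfy the same intertwining relation; so by the computation just made, $(\sigma_{1}^{-1}\times\sigma_{2}^{-1})|_{U}$ is a well-defined endomorphism of $U$, and it is manifestly a two-sided inverse of $\sigma$. Hence $\sigma\in Aut(U)$.

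It then remains to match up the distinguished subgroups and the induced automorphisms, which is pure bookkeeping. Since $\pi_{2}(e)=e$ one has $U\cap(P_{1}\times\{e\})=\ker(\pi_{1})\times\{e\}$, and $\sigma(x,e)=(\sigma_{1}(x),e)$ with $\sigma_{1}(x)\in\ker(\pi_{1})$ by hypothesis, so $\sigma$ preserves $U\cap(P_{1}\times\{e\})$, and symmetrically $U\cap(\{e\}\times P_{2})$. Finally, under the canonical isomorphism $P_{1}\simeq U/(U\cap(\{e\}\times P_{2}))$ already recorded before the lemma --- realised by the projection $(x,y)\mapsto x$, whose kernel is $\{e\}\times\ker(\pi_{2})=U\cap(\{e\}\times P_{2})$ --- the automorphism induced by $\sigma$ sends the class of $(x,y)$ to $\sigma_{1}(x)$, i.e. it is $\sigma_{1}$, and likewise $\sigma$ induces $\sigma_{2}$ on $P_{2}$. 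I do not expect any real obstacle: the entire weight of the statement sits in the one-line well-definedness check in the first paragraph, where the compatibility $\bar{\sigma}_{1}=\bar{\sigma}_{2}$ is exactly what is needed, and the rest is routine.
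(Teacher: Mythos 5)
Your proof is correct and follows essentially the same route as the paper's: both define $\sigma$ as the restriction of $\sigma_{1}\times\sigma_{2}$ to $U$ and verify well-definedness via the chain $\pi_{1}(\sigma_{1}(x))=\bar{\sigma}_{1}(\pi_{1}(x))=\bar{\sigma}_{1}(\pi_{2}(y))=\bar{\sigma}_{2}(\pi_{2}(y))=\pi_{2}(\sigma_{2}(y))$. You spell out the bijectivity and bookkeeping steps that the paper dismisses as immediate, but there is no difference in method.
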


\begin{proof}
For $(x,y)\in U$ define $\sigma(x,y)=(\sigma_{1}(x),\sigma_{2}(y))$.
We claim that we have $(\sigma_{1}(x),\sigma_{2}(y))\in H$. Indeed
\[
\pi_{1}(\sigma_{1}(x))=\bar{\sigma}_{1}(\pi_{1}(x))=\bar{\sigma}_{1}(\pi_{2}(y))=\bar{\sigma}_{2}(\pi_{2}(y))=\pi_{2}(\sigma_{2}(y)).
\]
Showing that $\sigma$ is a bijective homomorphism follows from the
same properties for $\sigma_{1},\sigma_{2}$. The other properties
of $\sigma$ follow straightforward from the definition.
\end{proof}
Applying Lemma \ref{lem:elementary} on $\alpha_{m}\in Aut(\Gamma/\Gamma^{m})$
and $\beta\in IA^{*}(\Delta)$ we obtain $\alpha\in Aut(\Gamma)$
that its projection is $\alpha_{m}$ through $Aut(\Gamma)\to Aut(\Gamma/\Gamma^{m})$.
Hence, in order to finish the proof of Theorem \ref{thm:The therorem},
it remains to show that the $\alpha$ we got is not only in $Aut(\Gamma)$
but also in $\alpha\in IA^{*}(\Gamma)$. 
\begin{lem}
\label{lem:iso}Recall that $\Gamma^{*}=\bar{\Gamma}/tor(\bar{\Gamma})$
where $\bar{\Gamma}=\Gamma/\Gamma'$, and denote $\Delta^{*}=\bar{\Delta}/tor(\bar{\Delta})$
where $\bar{\Delta}=\Delta/\Delta'$. Then, we have a canonical isomorphism
$\Gamma^{*}\simeq\Delta^{*}$. 
\end{lem}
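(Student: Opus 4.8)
The plan is to show that the quotient map $\pi\colon\Gamma\twoheadrightarrow\Delta=\Gamma/tor(\Gamma)$ induces the asserted isomorphism. Concretely, $\pi$ induces a surjection $\bar{\pi}\colon\bar{\Gamma}\twoheadrightarrow\bar{\Delta}$ on abelianizations (using $\Delta'=\pi(\Gamma')$), and since $\pi^{-1}(\Delta')=\Gamma'\cdot tor(\Gamma)$, the kernel $T$ of $\bar{\pi}$ is precisely the image of $tor(\Gamma)$ in $\bar{\Gamma}$. As $tor(\Gamma)$ is finite, $T$ is finite, hence $T\subseteq tor(\bar{\Gamma})$. Since homomorphisms send torsion elements to torsion elements, $\bar{\pi}$ also carries $tor(\bar{\Gamma})$ into $tor(\bar{\Delta})$, so it descends to a homomorphism $\phi\colon\Gamma^{*}=\bar{\Gamma}/tor(\bar{\Gamma})\to\bar{\Delta}/tor(\bar{\Delta})=\Delta^{*}$. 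This $\phi$ is the canonical map referred to in the statement, and it remains to prove it is bijective.

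Surjectivity is immediate: $\bar{\pi}$ and the projection $\bar{\Delta}\twoheadrightarrow\Delta^{*}$ are surjective, so their composite $\bar{\Gamma}\twoheadrightarrow\Delta^{*}$ is surjective, and it factors through $\phi$. For injectivity, suppose $x\in\bar{\Gamma}$ lies in $\ker\phi$; then $\bar{\pi}(x)\in tor(\bar{\Delta})$, so $\bar{\pi}(x^{k})=\bar{\pi}(x)^{k}=0$ for some $k\geq1$, i.e. $x^{k}\in T$. Since $T$ is finite, $x^{k}$---and therefore $x$---has finite order in $\bar{\Gamma}$, so $x\in tor(\bar{\Gamma})$ and its image in $\Gamma^{*}$ is trivial. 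Hence $\phi$ is an isomorphism.

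I do not anticipate a genuine obstacle here: the only point needing attention is the finiteness of $\ker\bar{\pi}$, which is exactly where the hypothesis that $\Gamma$ is finitely generated nilpotent (so that $tor(\Gamma)$ is finite) enters, and it is what makes the injectivity step work. As a sanity check one may also observe that tensoring $\bar{\pi}$ with $\mathbb{Q}$ yields an isomorphism $\bar{\Gamma}\otimes\mathbb{Q}\cong\bar{\Delta}\otimes\mathbb{Q}$, so $\Gamma^{*}$ and $\Delta^{*}$ are free abelian of the same finite rank; since a surjective homomorphism between free abelian groups of equal finite rank is automatically an isomorphism, this reproves the claim along a slightly less explicit route.
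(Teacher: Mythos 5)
Your proof is correct and reaches the same canonical isomorphism as the paper, though by a slightly different internal route: the paper constructs the inverse map $\Delta^{*}\twoheadrightarrow\Gamma^{*}$ explicitly (using that $tor(\Gamma)$ dies in $\Gamma^{*}$, so $\Delta\twoheadrightarrow\Gamma^{*}$ factors through $\bar{\Delta}$ and then $\Delta^{*}$) and simply observes it is a two-sided inverse of the natural map, whereas you verify injectivity and surjectivity of the natural map $\Gamma^{*}\to\Delta^{*}$ directly, with the injectivity step spelling out the role of the finiteness of $tor(\Gamma)$. Both are short, elementary, and essentially equivalent; your version supplies the detail the paper compresses into \emph{``obviously''}.
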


\begin{proof}
We have a natural projection
\[
\Delta=\Gamma/tor(\Gamma)\twoheadrightarrow\bar{\Gamma}/tor(\bar{\Gamma})=\Gamma^{*}
\]
which gives rise to a map $\bar{\Delta}=\Delta/\Delta'\twoheadrightarrow\Gamma^{*}$
and to a map $\Delta^{*}=\bar{\Delta}/tor(\bar{\Delta})\twoheadrightarrow\Gamma^{*}$.
Obviously, this map is the inverse of the natural map $\Gamma^{*}\twoheadrightarrow\Delta^{*}$.
So $\Gamma^{*}\simeq\Delta^{*}$ as required.
\end{proof}
From this lemma we get that the preimage of $IA^{*}(\Delta)$ under
the map $Aut(\Gamma)\to Aut(\Delta)$ is $IA^{*}(\Gamma)$. Thus,
as $\alpha$ is a preimage of $\beta$ through $Aut(\Gamma)\to Aut(\Delta)$
and $\beta\in IA^{*}(\Delta)$ it follows that indeed $\alpha\in IA^{*}(\Gamma)$,
as required.

\section{\label{sec:free}The Free cases}

In this section we sketch a more straightforward proof to Corollary
\ref{cor:isomorphism} in the special case where $\Psi_{c}$ is a
free nilpotent group on $n$ elements, a proof which does not refer
neither to the CSP nor to the strong approximation for unipotent groups.
Notice that as $\Psi_{c}^{*}=\Psi_{c}/\Psi_{c}'=\mathbb{Z}^{(n)}$
and $\hat{\Psi}_{c}^{*}=\widehat{\Psi_{c}/\Psi_{c}'}=\mathbb{\hat{Z}}^{(n)}$,
in this case $IA(\Psi_{c})=IA^{*}(\Psi_{c})$ and $IA(\hat{\Psi}_{c})=IA^{*}(\hat{\Psi}_{c})$.
Let us formulate the assertion:
\begin{thm}
Let $\Psi_{c}$ be the free nilpotent group on $n$ elements. For
every $c\in\mathbb{N}$, the map $\widehat{IA(\Psi_{c})}\to IA(\hat{\Psi}_{c})$
is an isomorphism. 
\end{thm}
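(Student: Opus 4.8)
The plan is to equip both $\widehat{IA(\Psi_c)}$ and $IA(\hat\Psi_c)$ with one and the same finite filtration by subgroups, with free abelian (resp.\ free pro-abelian) graded quotients, arranged so that the canonical map is filtered and induces the obvious completion map on each graded piece; a downward induction with the five lemma then forces the map to be an isomorphism. First I would set up the filtration on $IA(\Psi_c)$. Writing $\gamma_j=\gamma_j(\Psi_c)$ (so $\gamma_{c+1}=1$), $r_j=\mathrm{rank}_{\mathbb Z}(\gamma_j/\gamma_{j+1})$, and for $1\le j\le c$
\[
IA_j=\{\sigma\in Aut(\Psi_c)\ :\ \sigma(x_i)x_i^{-1}\in\gamma_{j+1}\ \text{for }1\le i\le n\},
\]
one has $IA_1=IA(\Psi_c)$, $IA_c=\{1\}$ and $[IA_1,IA_j]\subseteq IA_{j+1}$. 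The map $\sigma\mapsto(\bar x_i\mapsto\overline{\sigma(x_i)x_i^{-1}})$ is a homomorphism $IA_j\to\mathrm{Hom}(\Psi_c^{ab},\gamma_{j+1}/\gamma_{j+2})$ with kernel $IA_{j+1}$; it is surjective because, given a target datum $\bar x_i\mapsto\bar w_i$, one lifts to $w_i\in\gamma_{j+1}$ and sets $\sigma(x_i)=x_iw_i$, which (as $w_i\in\gamma_2$) generates $\Psi_c$ mod $\Psi_c'$, hence generates $\Psi_c$, so $\sigma$ is a surjective endomorphism of the Hopfian group $\Psi_c$, i.e.\ an automorphism in $IA_j$. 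Thus $IA_j/IA_{j+1}\cong\mathrm{Hom}(\mathbb Z^n,\gamma_{j+1}/\gamma_{j+2})\cong\mathbb Z^{nr_{j+1}}$, and in particular $IA(\Psi_c)$ is finitely generated, torsion free and nilpotent.

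Next I would run the same analysis for $\hat\Psi_c=\widehat{\Psi_{n,c}}$, using that it is the free object on $x_1,\dots,x_n$ in the category of pro-(finite nilpotent of class $\le c$) groups, that its lower central terms are the closures $\overline{\gamma_j}$ of those of $\Psi_c$, and that $\overline{\gamma_{j+1}}/\overline{\gamma_{j+2}}\cong\widehat{\gamma_{j+1}/\gamma_{j+2}}\cong\hat{\mathbb Z}^{r_{j+1}}$ (exactness of profinite completion on finitely generated nilpotent groups). With $\widehat{IA}_j=\{\sigma\in IA(\hat\Psi_c):\sigma(x_i)x_i^{-1}\in\gamma_{j+1}(\hat\Psi_c)\}$ the argument repeats; surjectivity of $\sigma\mapsto(\bar x_i\mapsto\overline{\sigma(x_i)x_i^{-1}})$ onto $\mathrm{Hom}_{\mathrm{cont}}(\hat{\mathbb Z}^n,\overline{\gamma_{j+1}}/\overline{\gamma_{j+2}})$ now uses that $x_i\mapsto x_iw_i$ extends, by freeness, to a continuous endomorphism which is onto because $\overline{\gamma_2(\hat\Psi_c)}$ lies in the Frattini subgroup of the finitely generated pronilpotent group $\hat\Psi_c$, hence is an automorphism. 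So $\widehat{IA}_1=IA(\hat\Psi_c)$, $\widehat{IA}_c=\{1\}$, and $\widehat{IA}_j/\widehat{IA}_{j+1}\cong\hat{\mathbb Z}^{nr_{j+1}}\cong\widehat{IA_j/IA_{j+1}}$.

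Then I would compare the two. The canonical map $IA(\Psi_c)\to IA(\hat\Psi_c)$ (extension of automorphisms to the completion) carries $IA_j$ into $\widehat{IA}_j$ and on graded pieces is the completion map $\mathrm{Hom}(\mathbb Z^n,\gamma_{j+1}/\gamma_{j+2})\to\mathrm{Hom}_{\mathrm{cont}}(\hat{\mathbb Z}^n,\overline{\gamma_{j+1}}/\overline{\gamma_{j+2}})$. Passing to profinite completions, and using exactness of completion on finitely generated nilpotent groups once more, the $\widehat{IA_j}$ form a filtration of $\widehat{IA(\Psi_c)}$ with $\widehat{IA_j}/\widehat{IA_{j+1}}\cong\widehat{IA_j/IA_{j+1}}$, and $\widehat{IA_j}\to\widehat{IA}_j$ induces on each graded piece the isomorphism of the previous step. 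Induct downward: $\widehat{IA_c}=\{1\}=\widehat{IA}_c$, and if $\widehat{IA_{j+1}}\xrightarrow{\ \sim\ }\widehat{IA}_{j+1}$, comparing the two short exact sequences (exact on the left by exactness of completion, on the right by construction) with isomorphic quotients gives, by the five lemma, $\widehat{IA_j}\xrightarrow{\ \sim\ }\widehat{IA}_j$. Taking $j=1$ yields $\widehat{IA(\Psi_c)}\xrightarrow{\ \sim\ }IA(\hat\Psi_c)$.

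The step needing the most care — and the only place where freeness of $\Psi_{n,c}$ is genuinely used — is the second one: one must verify that $\widehat{IA}_j/\widehat{IA}_{j+1}$ is the \emph{full} group of continuous homomorphisms (surjectivity of $\sigma\mapsto(\bar x_i\mapsto\overline{\sigma(x_i)x_i^{-1}})$, which rests on freeness of $\hat\Psi_c$ in the relevant pro-category together with the Frattini/Hopficity argument), and that it is canonically $\widehat{IA_j/IA_{j+1}}$, so that the two filtrations have \emph{matching} graded pieces over which the canonical map restricts to the completion map; everything else is formal diagram chasing. I note that one may equivalently run the whole argument one prime at a time, replacing $\hat\Psi_c$ by its pro-$p$ completion — the free pro-$p$ group of class $\le c$ on $n$ generators — and $\ \widehat{\phantom{x}}\ $ by pro-$p$ completion; this changes nothing essential.
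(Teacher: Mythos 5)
Your proof is correct, and it takes a route that is organized differently from the paper's, though the two are conceptually kin. The paper inducts on the nilpotency class $c$: it uses the surjections $Aut(\Psi_{c+1})\twoheadrightarrow Aut(\Psi_c)$ and $Aut(\hat\Psi_{c+1})\twoheadrightarrow Aut(\hat\Psi_c)$ (citing Andreadakis and Lubotzky), identifies the kernel $A(\Psi_{c+1})=\ker(IA(\Psi_{c+1})\to IA(\Psi_c))$ with $Z(\Psi_{c+1})^{(n)}$ and its profinite analogue with $Z(\hat\Psi_{c+1})^{(n)}$, and then chases the resulting diagram using the induction hypothesis for $c$. You instead fix $c$ and filter $IA(\Psi_c)$ by the Andreadakis filtration $IA_j$, compute each graded piece as $\mathrm{Hom}(\mathbb{Z}^n,\gamma_{j+1}/\gamma_{j+2})$, build the matching filtration $\widehat{IA}_j$ on $IA(\hat\Psi_c)$, and run a downward induction with the five lemma. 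These are two views of the same computation: the paper's kernel $A(\Psi_{c+1})$ is exactly the bottom Andreadakis term $IA_c$ for $\Psi_{c+1}$, so the paper is in effect unrolling your filtration one layer at a time across varying $c$, while you unroll it all at once for one $c$. What each approach buys: the paper's version only requires analyzing one graded piece (the center) per step, at the cost of needing the surjectivity of the restriction map $Aut(\Psi_{c+1})\to Aut(\Psi_c)$ and its profinite analogue as external inputs; your version dispenses with those surjectivity lemmas entirely, at the cost of verifying the whole Andreadakis graded structure and the exactness of profinite completion on the short exact sequences $1\to IA_{j+1}\to IA_j\to Q_j\to 1$, which you correctly justify by observing that the filtration exhibits $IA(\Psi_c)$ as finitely generated torsion-free nilpotent. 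Your approach also makes the structure of $IA(\Psi_c)$ as a finitely generated torsion-free nilpotent group an explicit byproduct, which the paper obtains only implicitly in its Section 2.

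One small caution worth flagging: you rightly single out the surjectivity of $\sigma\mapsto(\bar x_i\mapsto\overline{\sigma(x_i)x_i^{-1}})$ onto $\mathrm{Hom}_{\mathrm{cont}}(\hat{\mathbb{Z}}^n,\overline{\gamma_{j+1}}/\overline{\gamma_{j+2}})$ as the delicate point. Your argument via freeness of $\hat\Psi_c$ in the pro-(finite nilpotent of class $\leq c$) category and the Frattini argument is sound, but you should also note that the same freeness/Frattini argument is needed to see that the resulting continuous endomorphism is an automorphism (not merely a surjection): for a finitely generated pronilpotent group, a continuous surjective endomorphism is automatically bijective, which is the profinite analogue of Hopficity you are invoking. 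Once that is in place the rest is, as you say, formal.
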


\begin{proof}
We will prove it by induction on $c$. For $c=1,2$ the result is
trivial, as $\Psi_{1}=\{e\}$ and $\Psi_{2}=\mathbb{Z}^{(n)}$, so
\[
\widehat{IA(\Psi_{1})}=IA(\hat{\Psi}_{1})=\widehat{IA(\Psi_{2})}=IA(\hat{\Psi}_{2})=\{e\}.
\]

For the induction step we will use the (easy) facts that for every
$c$, the natural map $Aut(\Psi_{c+1})\to Aut(\Psi_{c})$ is surjective
(see \cite{key-15}), and the natural map $Aut(\hat{\Psi}_{c+1})\to Aut(\hat{\Psi}_{c})$
is also surjective (see Section 5.2 in \cite{key-9}). So let $c\geq2$.
Denote 
\begin{align*}
A(\Psi_{c+1}) & =\ker(IA(\Psi_{c+1}))\twoheadrightarrow IA(\Psi_{c}))\\
A(\hat{\Psi}_{c+1}) & =\ker(IA(\hat{\Psi}_{c+1}))\twoheadrightarrow IA(\hat{\Psi}_{c})).
\end{align*}
We have the commutative exact diagram
\[
\begin{array}{ccccccccc}
1 & \to & A(\Psi_{c+1}) & \to & IA(\Psi_{c+1}) & \to & IA(\Psi_{c}) & \to & 1\\
 &  & \downarrow &  & \downarrow &  & \downarrow\\
1 & \to & A(\hat{\Psi}_{c+1}) & \to & IA(\hat{\Psi}_{c+1}) & \to & IA(\hat{\Psi}_{c}) & \to & 1
\end{array}
\]
which gives rise to the commutative exact diagram
\[
\begin{array}{cccccccccc}
 &  & \widehat{A(\Psi_{c+1})} & \to & \widehat{IA(\Psi_{c+1})} & \to & \widehat{IA(\Psi_{c})} & \to & 1\\
 &  & \downarrow &  & \downarrow &  & \downarrow\\
1 & \to & A(\hat{\Psi}_{c+1}) & \to & IA(\hat{\Psi}_{c+1}) & \to & IA(\hat{\Psi}_{c}) & \to & 1 & .
\end{array}
\]
By the induction hypothesis and diagram chasing, it is enough to prove
that the map $\widehat{A(\Psi_{c+1})}\to A(\hat{\Psi}_{c+1})$ is
an isomorphism. 

Let $x_{1},\ldots,x_{n}$ be a set of free generators for $\Psi_{c+1}$.
Denote 
\[
Z(\Psi_{c+1})^{(n)}=\underset{n}{\underbrace{Z(\Psi_{c+1})\times...\times Z(\Psi_{c+1})}}.
\]
where $Z(\Psi_{c+1})$ is the center of $\Psi_{c+1}$. Observe now
that $A(\Psi_{c+1})$ can be viewed as a subgroup of $Z(\Psi_{c+1})^{(n)}$
in the following way: For $\alpha\in A(\Psi_{c+1})$, one can describe
it by its action on the generators of $\Psi_{c+1}$ and write $\alpha(x_{i})=x_{i}u_{i}$
for some $u_{i}\in Z(\Psi_{c+1})$. We claim that the map $\alpha\mapsto(u_{1},\ldots,u_{n})\in Z(\Psi_{c+1})^{(n)}$
is a natural injective \uline{homomorphism} $A(\Psi_{c+1})\to Z(\Psi_{c+1})^{(n)}$.
Indeed, let $\alpha,\beta\in A(\Psi_{c+1})$ defined by $\alpha(x_{i})=x_{i}u_{i}$
and $\beta(x_{i})=x_{i}v_{i}$ for some $u_{i},v_{i}\in Z(\Psi_{c+1})$.
Now, as $u_{i}\in Z(\Psi_{c+1})\subseteq\Psi_{c+1}'$ it can be written
as a product of commutators of words on the generators $x_{i}$. As
$u_{i},v_{i}\in Z(\Psi_{c+1})$, it is easy to see that $(\beta\circ\alpha)(x_{i})=x_{i}v_{i}u_{i}$.
This proves the claim. In \cite{key-15} it is proven that the above
map $A(\Psi_{c+1})\to Z(\Psi_{c+1})^{(n)}$ is also surjective, and
hence, it is an isomorphism. I.e., for every choice of elements $u_{1},\ldots,u_{n}\in Z(\Psi_{c+1})$,
one can define an element $\alpha\in A(\Psi_{c+1})$ by defining $\alpha(x_{i})=x_{i}u_{i}$
for the generators $x_{1},\ldots,x_{n}$ of $\Psi_{c+1}$. This is
indeed an automorphism as $\{x_{i}u_{i}\}_{i=1}^{n}$ generate $\Psi_{c+1}$,
which is a free nilpotent group.

Using a similar approach, one can prove that $A(\hat{\Psi}_{c+1})\cong Z(\hat{\Psi}_{c+1})^{(n)}$
(see also \cite{key-9}) and thus 
\[
\widehat{A(\Psi_{c+1})}\cong\widehat{Z(\Psi_{c+1})}^{(n)}\cong Z(\hat{\Psi}_{c+1})^{(n)}\cong A(\hat{\Psi}_{c+1})
\]
as required.
\end{proof}

\begin{tabular}{>{\raggedright}p{6cm}l}
Ben-Ezra, David El-Chai & Lubotzky, Alexander\tabularnewline
Department of Mathematics & Einstein Institute of Mathematics\tabularnewline
University of California in San-Diego & The Hebrew University\tabularnewline
La Jolla, CA 92093 & Jerusalem, 91904\tabularnewline
USA  & Israel\tabularnewline
davidel-chai.ben-ezra@mail.huji.ac.il & alex.lubotzky@mail.huji.ac.il\tabularnewline
\end{tabular}

\begin{thebibliography}{DDSM}
\bibitem[A]{key-15} S. Andreadakis, On the automorphisms of free
groups and free nilpotent groups, Proc. London Math. Soc. 15 (1965),
239\textendash 268.

\bibitem[Ba]{key-1}G. Baumslag, Automorphism groups of nilpotent
groups. Amer. J. Math. 91 (1969), 1003\textendash 1011.

\bibitem[Be1]{key-6} D. E-C. Ben-Ezra, The congruence subgroup problem
for the free metabelian group on two generators. Groups Geom. Dyn.
10 (2016), 583\textendash 599.

\bibitem[Be2]{key-14} D. E-C. Ben-Ezra, The IA-congruence kernel
of high rank free Metabelian groups, Annals of K-Theory 4 (2019),
383-438.

\bibitem[Be3]{key-6-2} D. E-C. Ben-Ezra, The congruence subgroup
problem for the free metabelian group on $n\geq4$ generators, Groups
Geom. Dyn. to appear, arXiv:1701.02459.

\bibitem[BER]{key-30}K-U. Bux, M. V. Ershov, A. S. Rapinchuk, The
congruence subgroup property for $Aut\left(F_{2}\right)$: a group-theoretic
proof of Asada's theorem, Groups Geom. Dyn. 5 (2011), 327\textendash 353.

\bibitem[BG]{key-31}R. M. Bryant, J. R. J. Groves, Algebraic groups
of automorphisms of nilpotent groups and Lie algebras, Journal of
the London Mathematical Society 2 (1986), 453-466.

\bibitem[BP]{key-32}R. M. Bryant, A. Papistas, Automorphism groups
of nilpotent groups, Bulletin of the London Mathematical Society 21
(1989), 459-462.

\bibitem[BL]{key-7} D. E-C. Ben-Ezra, A. Lubotzky, \foreignlanguage{american}{The
congruence subgroup problem for low rank free and free metabelian
groups, J. Algebra 500 (2018), 171\textendash 192.} 

\bibitem[BLS]{key-23} H. Bass, M. Lazard, J.-P. Serre, Sous-groupes
d'indice fini dans $SL_{n}\left(\mathbb{Z}\right)$, Bull. Amer. Math.
Soc. 70 (1964) 385\textendash 392 (French).

\bibitem[C]{key-33}J. S. Chahal, Solution of the congruence subgroup
problem for solvable algebraic groups, Nagoya Mathematical Journal
79 (1980) 141-144.

\bibitem[DDSM]{key-36}J. D. Dixon, M. P. Du Sautoy, D. Segal, A.
Mann, Analytic pro-$p$ groups, Cambridge University Press, 2003.

\bibitem[KW]{key-10} P. H. Kropholler, J. S. Wilson, Torsion in profinite
completions, J. Pure Appl. Algebra, 88 (1993) 143-154.

\bibitem[L1]{key-4} A. Lubotzky, Free quotients and the congruence
kernel of $SL_{2}$, J. Algebra 77 (1982), 411\textendash 418.

\bibitem[L2]{key-9} A. Lubotzky, Combinatorial group theory for pro-$p$-groups,
J. Pure Appl. Algebra 25 (1982), 311\textendash 325.

\bibitem[Mel]{key-17} O. V. Mel\textasciiacute nikov, Congruence
kernel of the group $SL_{2}\left(\mathbb{Z}\right)$, (Russian) Dokl.
Akad. Nauk SSSR 228 (1976), 1034\textendash 1036.

\bibitem[Men]{key-22} J. L. Mennicke, Finite factor groups of the
unimodular group, Ann. of Math. 81 (1965), 31\textendash 37. 

\bibitem[NS]{key-17-1} N. Nikolov, D. Segal, Finite index subgroups
in profinite groups, C. R. Math. Acad. Sci. Paris 337 (2003), 303\textendash 308.

\bibitem[P]{key-34}V. P. Platonov, On the congruence subgroup problem
for integral soluble groups, Dokl. Akad. Nauk BSSR 15 (1971), 869-872
(Russian).

\bibitem[PS]{key-35}V. P. Platonov, A. A. Sharonet, On the congruence
subgroup problem for linear groups over arithmetic rings, Dokl. Akad.
Nauk BSSR 16 (1972), 393-396 (Russian).

\bibitem[PR]{key-36-1}V. Platonov, A. Rapinchuk, Algebraic Groups
and Number Theory, Academic press, 1993.

\bibitem[Rag]{key-15-1} M. S. Raghunathan, The congruence subgroup
problem, Proc. Indian Acad. Sci. Math. Sci. 114 (2004), 299\textendash 308. 

\bibitem[Rap]{key-16} A. S. Rapinchuk, The congruence subgroup problem,
Algebra, K-theory, Groups, and Education, New York, 1997, 175\textendash 188.

\bibitem[S]{key-32-1}D. Segal, Polycyclic Groups, No. 82, Cambridge
University Press, 2005.

\bibitem[W]{key-3} R. B. Warfield Jr. Nilpotent groups. Lecture Notes
in Mathematics, Vol. 513, Springer-Verlag, Berlin-New York, 1976.\\
\\
\\

\end{thebibliography}
\end{document}